\newcommand{\bu}{\boldsymbol u}
\newcommand{\bv}{\boldsymbol v}
\newcommand{\bV}{\boldsymbol V}
\newcommand{\bw}{\boldsymbol w}
\newcommand{\bbeta}{\boldsymbol  \eta}
\newcommand{\bzeta}{\boldsymbol \zeta}
\newcommand{\bz}{\boldsymbol z}
\newcommand{\bx}{\boldsymbol x}
\newcommand{\by}{\boldsymbol y}
\newcommand{\be}{\boldsymbol e}
\newcommand{\bvar}{\boldsymbol \varphi}
\newcommand{\bU}{\boldsymbol U}
\newcommand{\bs}{\boldsymbol s}
\newcommand{\bff}{\boldsymbol f}
\newtheorem{Theorem}{Theorem}[section]
\newtheorem{lema}[Theorem]{Lemma}
\newtheorem{remark}[Theorem]{Remark}
\newtheorem{Proof}{{\em Proof:}}
\newenvironment{proof}{\begin{Proof}\rm}{\hfill $\Box$ \end{Proof}}
\newcommand{\linenomathpatch}[1]{%
  \cspreto{#1}{\linenomath}%
  \cspreto{#1*}{\linenomath}%
  \csappto{end#1}{\endlinenomath}%
  \csappto{end#1*}{\endlinenomath}%
}
\title{POD-ROMs for incompressible flows including snapshots of the temporal derivative of the
full order solution: Error bounds for the pressure}
\author{Bosco
Garc\'{\i}a-Archilla\thanks{Departamento de Matem\'atica Aplicada
II, Universidad de Sevilla, Sevilla, Spain. Research is supported by
Spanish MCINYU under grants PGC2018-096265-B-I00 and PID2019-104141GB-I00 (bosco@esi.us.es)}
\and
Volker John\thanks{Weierstrass Institute for Applied Analysis and Stochastics,
Leibniz Institute in Forschungsverbund Berlin e. V. (WIAS), Mohrenstr. 39, 10117 Berlin, Germany.
Freie Universit\"at of Berlin,
Department of Mathematics and Computer Science,
Arnimallee 6, 14195 Berlin, Germany.}
\and
Sarah Katz\thanks{Weierstrass Institute for Applied Analysis and Stochastics,
Leibniz Institute in Forschungsverbund Berlin e. V. (WIAS), Mohrenstr. 39, 10117 Berlin, Germany.
Research is supported by the Deutsche Forschungsgemeinschaft
(DFG) within the RTG 2433 \emph{Differential Equation- and Data-driven Models in
Life Sciences and Fluid Dynamics (DAEDALUS).}}
  \and Julia Novo\thanks{Departamento de
Matem\'aticas, Universidad Aut\'onoma de Madrid, Spain. Research is supported
by Spanish MINECO
under grants PID2019-104141GB-I00 and VA169P20  (julia.novo@uam.es)}}
\date{\today}
\date{\today}
\begin{document}

\maketitle

\begin{abstract}
Reduced order methods (ROMs) for the incompressible Navier--Stokes equations,
based on proper orthogonal decomposition (POD), are studied that
include snapshots which approach the temporal derivative of the velocity from a
full order mixed finite element method (FOM).
In addition, the set of snapshots contains the mean velocity of the FOM. Both the FOM and the POD-ROM are
equipped with a grad-div stabilization. A velocity error analysis for this method can be found already in
the literature. The present paper studies two different procedures to compute approximations to the pressure
and proves error bounds for the pressure that are independent of inverse powers of the viscosity.
Numerical studies support the analytic results and compare both methods. 
 \end{abstract}

\noindent{\bf AMS subject classifications.} 65M12, 65M15, 65M60. \\
\noindent{\bf Keywords.} incompressible Navier--Stokes equations, proper orthogonal decomposition (POD),
reduced order models (ROMs), snapshots of the temporal derivative, grad-div stabilization,
robust pointwise in time estimates, pressure bounds

\section{Introduction}

It is sometimes necessary to compute numerical approximations of solutions of partial differential
equations very fast without requiring the highest accuracy. An example is the solution of optimal control
problems, which needs in each iteration the solution of a time-dependent partial differential
equation with only slightly changed data. A popular approach for performing highly efficient simulations
consists in using so-called reduced order models (ROMs). These models utilize a Galerkin approach and the
corresponding basis functions are extracted from one accurate numerical solution, which is called in this context
the full order model (FOM).

This paper studies incompressible flow problems that are modeled by the
incompressible Navier--Stokes equations
\begin{equation}
\begin{array}{rcll}
\label{NS} \partial_t\bu -\nu \Delta \bu + (\bu\cdot\nabla)\bu + \nabla p &=& \bff &\text{in }\ (0,T]\times\Omega, \\
\nabla \cdot \bu &=&0&\text{in }\ (0,T]\times\Omega,
\end{array}
\end{equation}
in a bounded domain $\Omega \subset {\mathbb R}^d$, $d \in \{2,3\}$.
The boundary of $\Omega$ is assumed to be polyhedral and Lipschitz.
In~\eqref{NS},
$\bu$ denotes the velocity field, $p$ the kinematic pressure, $\nu>0$ the kinematic viscosity coefficient,
 and $\bff$ represents the accelerations due to external body forces acting
on the fluid. The Navier--Stokes equations \eqref{NS} have to be complemented
with an initial condition $\bu(0,\bx)=\bu^0(\bx)$ and with
boundary conditions. For simplicity,
we only consider the case of homogeneous
Dirichlet boundary conditions $\bu = \boldsymbol 0$ on $[0,T]\times \partial \Omega$.

The most popular approach for computing basis functions for ROMs is probably the use of the proper
orthogonal decomposition (POD), which will be also  considered here. The complete method is often called
POD-ROM. In the past few years there has been a tremendous progress in the numerical analysis of 
POD-ROM methods, e.g., see \cite{Rub20,kean_sch,LS20,koc_rubino_et_al,novo_rubino,samu_et_al_pres,JMN22,GNR22,IRI22,wir_NS}.
In \cite{wir_NS}, we introduced a POD-ROM model based on a set of snapshots including
the mean value of the velocity at different time instants together with approximations to the velocity
time derivative at different times. Including grad-div stabilization, both in the snapshots computation
and the POD-ROM method, we were able to prove error bounds for the velocity that are convection-robust, i.e.,
the constants in the error bounds are independent of inverse powers of the viscosity.

There are applications that require the pressure solution for computing quantities of interest, e.g.,
lift or drag coefficients at bodies in the flow field. In other applications, the pressure is even more
important than the velocity, e.g., the pressure gradient is the most important biomarker for detecting
stenoses in blood vessels. Using in the FOM a pair of finite element spaces that satisfies a discrete
inf-sup condition, then the basis functions from the POD are discretely divergence-free. Consequently,
the POD-ROM, as a Galerkin method, does not contain the pressure. This situation was considered in \cite{wir_NS}.

The goal of this paper consists in analyzing two algorithms for computing a reduced order pressure. Both
algorithms use a set of velocity snapshots that are based on approximations of the temporal derivative
of the FOM velocity and on pressure snapshots that are solutions of the FOM. The first algorithm is
the supremizer enrichment algorithm from \cite{novo_rubino}. In the present paper, the analysis of \cite{novo_rubino} is extended
from a ROM that uses standard  FOM velocity snapshots
to a ROM that utilizes the above mentioned approximations of the temporal derivative of the velocity. The
second algorithm is the so-called stabilization-motivated POD-ROM proposed in \cite{John_et_al_vp}. This method
is already analyzed in \cite{samu_et_al_pres} and the present paper aims to improve this analysis in several
aspects. First, we avoid the strong restriction $h\le C \Delta t $, where $\Delta t$ and $h$ are
the time step and spatial mesh diameter, respectively. Second, differently to \cite{samu_et_al_pres},
we use the same procedure to compute a POD-ROM pressure in two and three spatial dimensions. Note that in
\cite{samu_et_al_pres} a so-called truncation of the velocity approximation is applied in the nonlinear
convective term to compute the reduced order pressure. Third, in contrast to \cite{samu_et_al_pres},
where only a first order convergence for the considered norm could be proved, our analysis shows for the same
norm an order that is induced from the pair of mixed finite element spaces that were used for performing the FOM simulation.
And finally, again in contrast to \cite{samu_et_al_pres}, the constant in the derived error bound does not
blow up if the viscosity coefficient tends to zero.

Numerical studies will support the analytic results. These studies will also provide an initial comparison 
of the supremizer enrichment and the stabilization-mo\-ti\-vated pressure ROMs. 

The paper is organized as follows. Section~\ref{sec:PN} introduces some notations, the considered finite
element spaces, recalls some inequalities used in the numerical analysis, and presents the FOM. In
Section~\ref{sec:POD} the error estimates for the POD-ROM velocity from \cite{wir_NS} are recalled and a few
new estimates for velocity terms are proved. The numerical analysis for the pressure ROMs is presented 
in Section~\ref{sec:press_rom_ana} and Section~\ref{sec:numres} contains the numerical studies. 
Finally, a summary of the results and an outlook is given in Section~\ref{sec:summary}.

\section{Finite Element Spaces and the FOM}\label{sec:PN}

Standard symbols will be used for Lebesgue and Sobolev spaces, with the usual convention that
$W^{s,2}(\Omega)= H^s(\Omega)$, $s\ge 1$. The inner product in $L^2(\Omega)^d$, $d\ge 1$, is denoted
by $(\cdot,\cdot)$.
In the sequel we denote
$L_0^2(\Omega)= \{q\in L^2(\Omega)\ \mid \ (q,1)=0\}$.
Let us recall
the Poincar\'e inequality,
\begin{equation}\label{poincare}
\|\bv\|_0\le C_p\|\nabla \bv\|_0\quad  \forall\ \bv\in H^1_0(\Omega)^d,
\end{equation}
and the estimate of the divergence of a velocity field by its gradient,
see  \cite[Remark~3.35]{John},
\begin{equation}\label{diver_vol}
\|\nabla \cdot \bv\|_0\le \|\nabla   \bv \|_0\quad \forall\ \bv\in H^1_0(\Omega)^d.
\end{equation}
The following Sobolev embeddings \cite{Adams} will be used in the analysis: For
$q \in [1, \infty)$, there exists a constant $C=C(\Omega, q)$ such
that
\begin{equation}\label{sob1}
\|v\|_{L^{q'}} \le C \| v\|_{W^{s,q}}, \,\,\quad
\frac{1}{q'}
\ge \frac{1}{q}-\frac{s}{d}>0,\quad q<\infty, \quad v \in
W^{s,q}(\Omega)^{d}.
\end{equation}
Generic constants independent of inverse powers of the viscosity and the mesh width are denoted by
$C$, $C_0$ and similar symbols.

Let us denote by  $\{\mathcal{T}_{h}\}=\{(K_j,\phi_{j}^{h})_{j \in J_{h}}\}$, $h>0$, a family of partitions of
$\overline\Omega$, where $h$ is the maximum diameter of the mesh cells $K_j\in \mathcal{T}_{h}$
and $\phi_j^h$ are the mappings from the reference simplex $K_0$ onto $K_j$.
We assume that the family of partitions is shape-regular and  quasi-uniform.
On $\mathcal T_h$, the following finite element spaces are defined
\begin{eqnarray*}
Y_h^l&=& \left\{v_h\in C^0(\overline\Omega)\ \mid \ {v_h}_{\mid_K}\in {\Bbb P}_l(K),\ \forall\ K\in \mathcal T_h\right\}, \ l\ge 1,\quad {\boldsymbol Y}_h^l=\left(Y_h^l\right)^d,\nonumber\\
{\boldsymbol X}_h^l&=&{\boldsymbol Y}_h^l\cap H_0^1(\Omega)^d, \nonumber\\
Q_h^l&=&Y_h^l\cap L_0^2(\Omega), \nonumber\\
{\boldsymbol V}_h^l&=&{\boldsymbol X}_h^l\cap \left\{ {\boldsymbol v}_{h} \in H_0^1(\Omega)^d \ \mid \
(\nabla\cdot{\boldsymbol v}_{h}, q_{h}) =0  \ \forall\ q_{h} \in Q_{h}^{l-1}
\right\},\quad l\geq 2.
\end{eqnarray*}
Hence, ${\boldsymbol V}_h^l$ is the space of discretely divergence-free functions.

The Lagrangian interpolant in ${\boldsymbol X}_h^l$ is denoted by $I_h(\cdot)$. It is well known, e.g.,
see \cite{Brenner-Scott}, that there exists a constant $C>0$ such that
for every~$K\in {\cal T}_h$
\begin{equation}
\label{interp}
\left\| \bu - I_h(\bu)\right\|_{0,K} + h\left\| \nabla(\bu - I_h(\bu))\right\|_{0,K} +h^2
 \| \Delta( \bu -  I_h(\bu))\|_{0,K}
\le C \|\bu\|_{l+1,K} h^{l+1}.
\end{equation}
The inverse inequality
\begin{equation}
\label{inv} | v_{h} |_{W^{m,p}(K)} \leq c_{\mathrm{inv}}
h_K^{n-m-d\left(\frac{1}{q}-\frac{1}{p}\right)}\quad
| v_{h}|_{W^{n,q}(K)} \quad \forall\ v_{h} \in Y_{h}^{l},
\end{equation}
holds,
with $0\leq n \leq m \leq 1$, $1\leq q \leq p \leq \infty$, and $h_K$
being the diameter of~$K \in \mathcal T_h$, because the family of partitions is quasi-uniform, e.g., see \cite[Theorem~3.2.6]{Cia78}. Define $h=\max_{K\in{\cal T}_h} h_K$.

In this paper, we consider Taylor--Hood pairs of finite element spaces \cite{BF,hood0}, i.e.,
pairs of the form $({\boldsymbol X}_h^l, Q_{h}^{l-1})$, $l \ge 2$. These pairs satisfy
a discrete inf-sup condition, see \cite{Bof94,Bof97}, that is, there is a constant
$\beta_{\rm is}>0$ independent of $h$ such that
\begin{equation}\label{lbbh}
 \inf_{q_{h}\in Q_{h}^{l-1}}\sup_{\bv_{h}\in{\boldsymbol X}_h^l}
\frac{(\nabla \cdot \bv_{h},q_{h})}{\|\nabla\bv_{h}\|_{0}
\|q_{h}\|_{0}} \geq \beta_{\rm{is}}.
\end{equation}
Taylor--Hood pairs, in particular for $l=2$, are probably the most popular pairs of
inf-sup stable finite element spaces.

Let ${\boldsymbol V}$ be the space of functions in $H_0^1(\Omega)^d$ with $\nabla \cdot \bu=0$.
The following modified Stokes projection  $\bs_h^m\ :\ {\boldsymbol V}\rightarrow {\boldsymbol V}_h^l$  was introduced in \cite{NS_grad_div} and  is defined by
\begin{equation}\label{stokespro_mod_def}
(\nabla \bs_h^m,\nabla \bvar_h)=(\nabla \bu,\nabla \bvar_h),\quad \forall\
\bvar_{h} \in {\bV_h^l}.
\end{equation}
This projection satisfies the following error bound, see \cite{NS_grad_div},
\begin{equation}
\|\bu-\bs_h^m\|_0+h\|\bu-\bs_h^m\|_1\le C\|\bu\|_j h^j,\quad
1\le j\le l+1.
\label{stokespro_mod}
\end{equation}

As FOM, we use a Galerkin method with grad-div stabilization. The continuous-in-time method reads as follows:
Find $(\bu_h,p_h)\in {\boldsymbol X}_h^l\times Q_h^{l-1}$ such that
\begin{equation}\label{eq:gal_grad_div}
\begin{array}{rcll}
\left(\partial_t\bu,\bv_h\right)+\nu(\nabla \bu_h,\nabla \bv_h)+b(\bu_h,\bu_h,\bv_h)
\\
-(\nabla \cdot \bv_h,p_h)+
\mu(\nabla \cdot\bu_h,\nabla \cdot \bv_h) & = & ({\boldsymbol f},\bv_h) & \forall\ \bv_h\in {\boldsymbol X}_h^l,\nonumber\\
(\nabla \cdot \bu_h,q_h)&=&0 &\forall\ q_h\in Q_h^{l-1},\nonumber
\end{array}
\end{equation}
where $\mu$ is the positive grad-div stabilization parameter and
\[
b(\bu,\bv,\bw) = ((\bu\cdot\nabla)\bv,\bw) +\frac12 ((\nabla\cdot\bu)\bv,\bw).
\]
The pressure drops out if the problem is considered for functions
from the discretely divergence-free space $\bV_h^l$, since
$\bu_h\in \bV_h^l$ satisfies
\begin{eqnarray}\label{eq:gal_grad_div2}
\left(\partial_t\bu_{h},\bv_h\right)+\nu(\nabla \bu_h,\nabla \bv_h)+b(\bu_h,\bu_h,\bv_h)&&
\nonumber\\
+
\mu(\nabla \cdot\bu_h,\nabla \cdot \bv_h)&=&({\boldsymbol f},\bv_h) \quad \forall\ \bv_h\in {\bV}_h^l.
\end{eqnarray}
For this method the following bound holds, see \cite{NS_grad_div} for the first term and
the explanation in \cite{wir_NS} for the second term,
\begin{equation}\label{eq:cota_grad_div}
\|\bu(t,\cdot)-\bu_h(t,\cdot)\|_0+ h\|\bu(t,\cdot)-\bu_h(t,\cdot)\|_1\le C(\bu,p,l+1) h^{l},\quad t\in (0,T],
\end{equation}
where the constant $C(\bu,p,l+1)$ does not explicitly depend on inverse powers of $\nu$.
To bound the error in the pressure, arguing as in \cite{NS_grad_div},  and using results in~\cite{cor}, one can prove
\begin{equation}\label{eq:cota_pre}
\left(\sum_{j=1}^n\Delta t \left\|  p^j-p_h^j\right\|_{0}^2 \right)^{1/2} \le C_{\rm press}(\bu,p,l+1) h^{l-1/2}.
\end{equation}

\section{Velocity POD-ROMs}\label{sec:POD}

This section describes briefly the velocity POD-ROMs that were investigated in \cite{wir_NS} and summarizes
the error estimates derived in this paper.

\subsection{The General Method}

Let $M\in\mathbb N$ be a positive integer and set $\Delta t=T/M$, i.e., the time instants are given
by $t_j=j\Delta t$, $j=0,\ldots, M$. For simplicity of presentation, let $\bu_h^j=\bu_h(t_j,\cdot)$,
$p_h^j=p_h(t_j,\cdot)$
denote the FOM approximation of the velocity at time instant $t_j$ and $\partial_t \bu_h^j = \partial_t \bu_h(t_j,\cdot)$
the approximation of the temporal derivative (e.g., see \cite[Remark~2.1]{wir_NS} on the computation of time derivatives). 
The mean value of the velocity snapshots is defined by
$\overline \bu_h=\frac{1}{N}\sum_{j=0}^M \bu_h^j$ with $N=M+1$. Then, the following space, which is
based on data from the FOM simulation, is defined
\begin{equation}\label{eq:span_U}
{\mathcal \bU} =
\mbox{span}\left\{\sqrt{N}\overline \bu_h,\tau\partial_t \bu_h^1,\ldots,\tau\partial_t \bu_h^M\right\}=\mbox{span}\left\{\by_h^1,\by_h^2,\ldots,\by_h^N\right\}.
\end{equation}
The factor $\tau$ in front of the temporal derivatives is a time scale. Its introduction aims
to make the vectors $ \by_h^j$, $j=1,\ldots,N$, dimensionally correct, i.e.,
all vectors possess the physical unit $\unitfrac{m}s$.  The dimension of  $\mathcal \bU$ is denoted by $d_v$.

Notice that by taking derivatives with respect to~time in the second equation in~\eqref{eq:gal_grad_div}
it follows that $\partial_t \bu_h^j$ is discretely divergence-free, i.e., $\partial_t \bu_h^j\in\bV_{h,l}$ for all $j=1,\ldots,M$, so that $\mathcal \bU\subset \bV_{h,l}$.

Let the space $\mathcal \bU$ be equipped with an inner product, denoted by $(\cdot, \cdot)_X$. For the
Navier--Stokes equations, this might be the product from $L^2(\Omega)^d$ or from
$H_0^1(\Omega)^{d\times d}$. Both cases will be considered in the present paper. Then, the
first step of the POD approach consists in defining the correlation matrix
$K^{\mathrm{v}}=((k_{i,j}^{\mathrm{v}}))\in {\mathbb R}^{N\times N}$
with the entries
\[
k_{i,j}^{\mathrm{v}}=\frac{1}{N}\left(\by_h^i,\by_h^j\right)_X, \quad i,j=1,\ldots,N.
\]

Denote by $ \lambda_1\ge  \lambda_2,\ldots\ge \lambda_{d_v}>0$ the positive eigenvalues
of $K^{\mathrm{v}}$ and by
$\bv_1,\ldots,\bv_{d_v}\in {\mathbb R}^{N}$  associated eigenvectors with Euclidean norm $1$.
Then, the  POD basis functions of $\mathcal \bU$, which are orthonormal, are defined by
\begin{equation*}
\bvar_k=\frac{1}{\sqrt{N}}\frac{1}{\sqrt{\lambda_k}}\sum_{j=1}^{N} v_k^j \by_h^j,
\end{equation*}
with  $v_k^j$ being the $j$-th component of $\bv_k$.
The following representation of the error was derived in \cite[Proposition~1]{kunisch}
\[
\frac{1}{N}\sum_{j=1}^N\left\|\by_h^j-\sum_{k=1}^r\left(\by_h^j,\bvar_k\right)_X\bvar_k\right\|_{X}^2=\sum_{k=r+1}^{d_v}\lambda_k.
\]
Inserting the functions from  \eqref{eq:span_U} yields
\begin{eqnarray}\label{eq:cota_pod_deriv}
\left\|\overline \bu_h-\sum_{k=1}^r\left(\overline\bu_h,\bvar_k\right)_X\bvar_k\right\|_{X}^2 && \nonumber \\
+\frac{\tau^2}{M+1}\sum_{j=1}^M\left\|\partial_t\bu_{h}^{j}-\sum_{k=1}^r\left(\partial_t\bu_{h}^{j},\bvar_k\right)_X\bvar_k\right\|_{X}^2 & =&
\sum_{k=r+1}^{d_v}\lambda_k.
\end{eqnarray}

The stiffness matrix of the POD basis is defined by $S^{\mathrm{v}}=((s_{i,j}^{\mathrm{v}}))\in {\mathbb R}^{d_v\times d_v}$, where
$s_{i,j}^{\mathrm{v}}=(\nabla \bvar_j,\nabla \bvar_i)_X$. In the case $X=L^2(\Omega)^d$, the following estimate
was shown in \cite[Lemma~2]{kunisch}:
\begin{equation}\label{eq:inv_S}
\|\nabla \bv \|_0\le \sqrt{\|S^{\mathrm{v}}\|_2}\|\bv\|_0\quad \forall\ \bv \in {\mathcal \bU}.
\end{equation}

We will denote the space spanned by the first $r$ POD basis functions by
\[
{\mathcal \bU}^r= \mbox{span}\left\{\bvar_1,\bvar_2,\ldots,\bvar_r\right\},\quad 1\le r\le d_v,
\]
and by $P_r^{\mathrm{v}}\ : \  {\boldsymbol X}_h^l  \to {\mathcal \bU}^r$,  the $X$-orthogonal projection onto ${\mathcal \bU}^r$.

In  \cite[(3.11)]{wir_NS}, we proved for any Banach space $Y$ defined on $\Omega$ the estimate
\begin{equation}\label{eq:zetast_mean}
\max_{0\le k\le N }\|\bz^k\|_Y^2 \le  {3}\|\overline\bz\|_Y^2+\frac{12 T^2}{M}\sum_{n=1}^M \|\partial_t \bz^n\|_Y^2+\frac{16T}{3}(\Delta t)^2\int_0^T\|\partial_{tt}\bz\|_Y^2\ ds,
\end{equation}
provided that $\bz\in H^2(0,T;Y)$.
Applying \eqref{eq:zetast_mean} to $\bu_h^n-P_r^{\mathrm{v}}\bu_h^n$ yields
\begin{eqnarray}\label{eq:bound_2nd_term_pre}
\max_{0\le n\le M }\|\bu_h^n-P_r^{\mathrm{v}}\bu_h^n\|_Y^2 &\le& 3\|\overline{\bu_h^n-P_r^{\mathrm{v}}\bu_h^n}\|_Y^2 
+\frac{12T^2}{M}\sum_{n=1}^M\|\partial_t\bu_h^n-P_r^{\mathrm{v}}\partial_t\bu_h^n\|_Y^2 \nonumber\\
&&\quad+\frac{16T}{3}(\Delta t)^2 \int_0^T\|\partial_{tt}\bu_{h}-P_r^{\mathrm{v}}\partial_{tt}\bu_h^n\|_Y^2\ ds.
\end{eqnarray}

Let $Y=X$, the space that is connected to the projection $P_r^{\mathrm{v}}$. Using the $X$-stability 
of the projection gives 
\[
 \|\partial_{tt}\bu_{h}-P_r^{\mathrm{v}}\partial_{tt}\bu_h\|_X^2\le 2\|P_r^{\mathrm{v}}\partial_{tt}\bu_h\|_X^2
 +2 \|\partial_{tt}\bu_{h}\|_X^2\le 4\|\partial_{tt}\bu_{h}\|_X^2,
\] 
so that we obtain with \eqref{eq:cota_pod_deriv}
\begin{equation}\label{eq:bound_2nd_term}
\max_{0\le n\le M }\|\bu_h^n-P_r^{\mathrm{v}}\bu_h^n\|_X^2 \le C_{X}^2 = \rho^2\sum_{k={r+1}}^{d_v}\lambda_k
+\frac{64}{3}T(\Delta t)^2 \int_0^T\|\partial_{tt}\bu_{h}\|_X^2\ ds,
\end{equation}
where
\begin{equation}
\label{rho} \rho= \max \left\{\sqrt3,\frac{\sqrt{24} T}\tau\right\}.
\end{equation}
All explicit constants in the bounds \eqref{eq:zetast_mean}--\eqref{rho} will immediately be absorbed in generic 
constants in the following analysis.

Summing over all time steps leads to
\begin{equation}\label{eq:cota_pod_0}
\frac{1}{M}\sum_{n=1}^M\left\|\bu_h^n-P_r^{\mathrm{v}}\bu_h^n\right\|_{X}^2\le C_X^2.
\end{equation}

\subsection{Velocity Error Estimate for $X=H_0^1(\Omega)^{d\times d}$}

As usual in the analysis of discretizations of the Navier--Stokes equations, errors for the pressure
are bounded by velocity errors, which have been estimated before. The velocity error bounds were derived
in \cite{wir_NS} and they are provided here for completeness of presentation.

For the sake of concentrating the numerical analysis to the essential points, the grad-div POD-ROM model
studied in \cite{wir_NS} was equipped with the implicit Euler method as temporal discretization:
For $n\ge 1$, find $\bu_r^n\in {\mathcal \bU}^r$ such that
\begin{eqnarray}\label{eq:pod_method2}
\lefteqn{\hspace*{-9em}\left(\frac{\bu_r^{n}-\bu_r^{n-1}}{\Delta t},\bvar\right)+\nu(\nabla \bu_r^n,\nabla\bvar)+b\left(\bu_r^n,\bu_r^n,\bvar\right)
+\mu(\nabla \cdot\bu_r^n,\nabla \cdot\bvar)}\nonumber\\
&=&(\bff^{n},\bvar)\quad \forall\ \bvar\in {\mathcal \bU}^r.
\end{eqnarray}

Since $\partial_t \bu_h^j\in\bV_{h,l}$, it follows that $\bu_r^n$ belongs to the
space of discretely divergence-free functions. Hence, there is no pressure term in
\eqref{eq:pod_method2} and this equation does not need to be augmented with a requirement on the
divergence of the solution.

The velocity error estimates in \cite{wir_NS} that we present below are convection-robust, i.e., all constants do not
depend on inverse powers of the viscosity.
Denoting by $\be_r^j=\bu_r^j-P_r^{\mathrm{v}}\bu_h^j$ the following estimate holds for $j=1,\ldots,M$, see \cite[(4.21)]{wir_NS},
\begin{eqnarray}\label{eq:pre_cota_finalSUPv}
\lefteqn{\|\be_r^j\|_0^2+2\nu\sum_{j=1}^M\Delta t \|\nabla \be_r^j\|_0^2
+\mu\sum_{j=1}^M\Delta t \|\nabla \cdot \be_r^j\|_0^2}\nonumber\\
&\le& C_0\|\be_r^0\|_0^2+
C_1\sum_{k=r+1}^{d_v} \lambda_k
+C_2
(\Delta t)^2\int_0^T\|\nabla\partial_{tt}\bu_h\|_0^2\ ds.
\end{eqnarray}
The following estimate
is proved in \cite[Theorem~4.1]{wir_NS}.
\begin{eqnarray}\label{eq:cota_finalSUPv}
\frac{1}{T}
\sum_{j=1}^M\Delta t \|\bu_r^j-\bu^j\|_0^2
&\le& C_0\|\bu_r^0 -\bu_h(0)\|_0^2+
C_1\sum_{k=r+1}^{d_v} \lambda_k
+C^2(\bu,p,l+1)h^{2l}
 \nonumber\\
&&
{}+C_2
(\Delta t)^2\int_0^T\|\nabla(\partial_{tt}\bu_h)\|_0^2\ ds.
\end{eqnarray}
The term with the second order time derivative can be bounded, see \cite[Appendix]{wir_NS}. However,
a robust bound in the case $X=H_0^1(\Omega)^{d\times d}$ was obtained only for $l\ge3$. In addition,
in~\cite[Theorem~4.3]{wir_NS}, it is shown that the right-hand side of~\eqref{eq:cota_finalSUPv} is also an upper
bound of pointwise-in-time estimates, that is
\begin{eqnarray*}
\max_{0\le n\le M} \|\bu_r^n-\bu^n\|_0^2
&\le& C_0\|\bu_r^0 -\bu_h(0)\|_0^2+
C_1\sum_{k=r+1}^{d_v} \lambda_k
+C^2(\bu,p,l+1)h^{2l}
 \nonumber\\
&&+C_2
(\Delta t)^2\int_0^T\|\nabla(\partial_{tt}\bu_h)\|_0^2\ ds.
\end{eqnarray*}

\subsection{Velocity Error Estimate for $X=L^2(\Omega)^d$}

The following bound can be found in \cite[(4.25)]{wir_NS}: for $j=1\ldots,M$,
\begin{eqnarray}\label{eq:pre_error3_b_L2}
\lefteqn{\|\be_r^j\|_0^2+2\nu\sum_{j=1}^M\Delta t \|\nabla \be_r^j\|_0^2
+\mu\sum_{j=1}^M\Delta t \|\nabla \cdot \be_r^j\|_0^2}\nonumber\\
&\le& C_0\|\be_r^0\|_0^2+(C_{1,1} +C_{1,2}\left\|S^{\mathrm{v}}\right\|_2)\sum_{k=r+1}^{d_v} \lambda_k
\nonumber\\
&&+(C_{2,1} + C_{2,2}\left\|S^{\mathrm{v}}\right\|_2)
(\Delta t)^2\int_0^T\|
\partial_{tt}\bu_h\|_0^2\ ds.
\end{eqnarray}
Then, it is proved in \cite[Theorem~4.6]{wir_NS} that
\begin{eqnarray*}
\frac{1}{T}
\sum_{j=1}^M\Delta t \|\bu_r^j-\bu^j\|_0^2
&\le & C_0\|\bu_r^0 -\bu_h(0)\|_0^2+
(C_{1,1} +C_{1,2}\left\|S^{\mathrm{v}}\right\|)\sum_{k=r+1}^{d_v} \lambda_k
\nonumber\\
&& +C(\bu,p,l+1)h^{2l}
\\
& & +(C_{2,1} + C_{2,2}\left\|S^{\mathrm{v}}\right\|)
(\Delta t)^2\int_0^T\|
\partial_{tt}\bu_h\|_0^2\ ds.
\nonumber
\end{eqnarray*}

\subsection{Further Estimates of Velocity Terms}

It was shown in \cite[(3.18), (3.16)]{{wir_NS}}  that
\begin{equation}
\label{bound_uh}
\max_{0\le n\le M}\|\nabla\bu_h^n\|_{L^{2d/(d-1)}}\le C_{\bu,{\rm ld}},\qquad \max_{0\le n\le M}\|\bu_h^n\|_\infty\le C_{\bu,\infty},
\end{equation}
where $C_{\bu,{\rm ld}}$ and $C_{\bu,\infty}$ depend on $C(\bu,p,3)$ and $\max_{0\le t\le T} \| \bu\|_2$, so that they
are valid if $\bu_h^n$ is replaced by~$\bu^n$.
In addition, one can find  in~\cite[(3.20), (3.25)]{{wir_NS}} that
\begin{equation}
\label{bound_Pr}
\max_{0\le n\le M}\|P_r^{\mathrm{v}} \bu_h^n\|_\infty\le C_{\rm inf},\quad \max_{0\le n\le M}
\|\nabla P_r^{\mathrm{v}}\bu_h^n\|_{L^{2d/(d-1)}}\le C_{\rm ld},
\end{equation}
where~$C_{\rm inf}$ and~$C_{\rm ld}$ depend on~$C_{\bu,{\rm ld}}$, $C_{\bu,\infty}$ and~$C_{L^2}$ (i.e., the constant~$C_X$ in~\eqref{eq:bound_2nd_term} when $X=L^2(\Omega)^d$).

From \eqref{bound_uh} and \eqref{bound_Pr},  adding and subtracting $\bu_r^n$ and applying the inverse inequality \eqref{inv},
we obtain
\begin{eqnarray*}
\|\bu_r^n\|_\infty&\le& \|\be_r^n\|_\infty+\|P_r^{\mathrm{v}} \bu_h^n\|_\infty\le c_{\mathrm{inv}} h^{-d/2}\|\be_r^n\|_0+C_{\rm inf}\\
\|\nabla \bu_r^n\|_{L^{2d/(d-1)}}&\le& \|\nabla \be_r^n\|_{L^{2d/(d-1)}}+\|\nabla P_r^{\mathrm{v}}\bu_h^n\|_{L^{2d/(d-1)}}
\\&\le& c_{\mathrm{inv}}h^{-3/2}\|\be_r^n\|_0+C_{\rm ld}.
\end{eqnarray*}
Now, in view of error bounds~\eqref{eq:pre_cota_finalSUPv} and \eqref{eq:pre_error3_b_L2} for $X=H_0^1(\Omega)^{d\times d}$ and $X=L^2(\Omega)^d$, respectively, we notice that for given $h$ it is possible to choose $r$ and~$\Delta t$ so that
\begin{equation}
\label{bound_ur}
\max_{0\le n\le M}\|\bu_r^n\|_\infty \le 2C_{\rm inf},\qquad \max_{0\le n\le M}\|\nabla \bu_r^n\|_{L^{2d/(d-1)}}
\le 2C_{\rm ld}.
\end{equation}
In the sequel, we will assume that this is the case so that estimates~\eqref{bound_ur} hold.

The next lemma provides estimates for the convective term.

\begin{lema}\label{le:nonli} There exist constants~$C_3$ and $C_4$ such that the following bounds hold for $n=1,\ldots,M$,
\begin{eqnarray}\label{eq:nonli}
|b(\bu_r^n,\bu_r^n,\bbeta)-b(\bu_h^n,\bu_h^n,\bbeta)|
&\le& C_3\|\bu_r^n-\bu_h^n\|_0\|\nabla\bbeta\|_0\quad \forall\ \bbeta\in H^1_0(\Omega)^d, \\
\label{eq:nonli2}
\left\|(\bu^n\cdot\nabla) \bu^n-(\bu_r^n\cdot \nabla) \bu_r^n\right\|_0 &\le& C_4 \left\|\nabla (\bu^n - \bu_r^n)\right\|_0,
\end{eqnarray}
where $C_3$ depends on $\|\bu_r^n\|_\infty$, $\|\nabla \bu_r^n\|_{L^{2d/(d-1)}}$, $\|\nabla\bu_h^n\|_{L^{2d/(d-1)}}$,
$\|\bu_h^n\|_\infty$ and $C_4$ depends on $\|\bu_r^n\|_\infty$, $\|\nabla \bu^n\|_{L^{2d/(d-1)}}$.
\end{lema}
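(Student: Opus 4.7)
The proof proposal is as follows. For both inequalities, I would rely on the algebraic splitting
\[
b(\bu_r^n,\bu_r^n,\bbeta)-b(\bu_h^n,\bu_h^n,\bbeta)
=b(\bu_r^n-\bu_h^n,\bu_r^n,\bbeta)+b(\bu_h^n,\bu_r^n-\bu_h^n,\bbeta),
\]
and analogously, $(\bu^n\cdot\nabla)\bu^n-(\bu_r^n\cdot\nabla)\bu_r^n = ((\bu^n-\bu_r^n)\cdot\nabla)\bu^n +(\bu_r^n\cdot\nabla)(\bu^n-\bu_r^n)$. Throughout, the uniform bounds \eqref{bound_uh} and \eqref{bound_ur} provide the constants named in the statement of the lemma, and the Sobolev embedding \eqref{sob1} is invoked via $\|\bbeta\|_{L^{2d}}\le C\|\nabla\bbeta\|_0$ for $d\in\{2,3\}$ (using $L^4$ for $d=2$ and $L^6$ for $d=3$), matching the Hölder triples $1/(2d) + (d-1)/(2d) + 1/2 = 1$.

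The main obstacle is proving \eqref{eq:nonli} with $\|\bu_r^n-\bu_h^n\|_0$ (not the $H^1$-norm) on the right-hand side, so we cannot afford a gradient on the difference $\bu_r^n-\bu_h^n$. The key idea is to exploit the skew-symmetry $b(\bw,\bv,\bchi)=-b(\bw,\bchi,\bv)$, which holds for the trilinear form defined in the paper because
$b(\bw,\bv,\bv)=0$ whenever $\bv,\bchi\in H^1_0(\Omega)^d$. Applying this to the second piece gives $b(\bu_h^n,\bu_r^n-\bu_h^n,\bbeta)=-b(\bu_h^n,\bbeta,\bu_r^n-\bu_h^n)$, which moves the derivative onto $\bbeta$, and thus yields a bound of the form $\|\bu_h^n\|_\infty\|\nabla\bbeta\|_0\|\bu_r^n-\bu_h^n\|_0$ plus a lower-order $\|\nabla\bu_h^n\|_{L^{2d/(d-1)}}$ contribution from the grad-div term of $b$. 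For the first piece $b(\bu_r^n-\bu_h^n,\bu_r^n,\bbeta)$, the convective part is handled directly by Hölder with exponents $2,\,2d/(d-1),\,2d$, giving $\|\bu_r^n-\bu_h^n\|_0\|\nabla\bu_r^n\|_{L^{2d/(d-1)}}\|\nabla\bbeta\|_0$; for the problematic grad-div part $\tfrac12((\nabla\cdot(\bu_r^n-\bu_h^n))\bu_r^n,\bbeta)$, I would integrate by parts, writing it as $-\tfrac12(\bu_r^n-\bu_h^n,\nabla(\bu_r^n\cdot\bbeta))$ and expanding $\nabla(\bu_r^n\cdot\bbeta)=(\nabla\bu_r^n)^T\bbeta+(\nabla\bbeta)^T\bu_r^n$, which yields terms controlled by $\|\bu_r^n-\bu_h^n\|_0\bigl(\|\nabla\bu_r^n\|_{L^{2d/(d-1)}}+\|\bu_r^n\|_\infty\bigr)\|\nabla\bbeta\|_0$. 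Collecting all four contributions gives the constant $C_3$ with the claimed dependencies.

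For \eqref{eq:nonli2}, no integration by parts is needed, because $\|\nabla(\bu^n-\bu_r^n)\|_0$ is allowed on the right. For the first summand I apply Hölder with exponents $2d/(d-1)$ and $2d$, then Sobolev embedding to get $\|((\bu^n-\bu_r^n)\cdot\nabla)\bu^n\|_0\le C\|\nabla\bu^n\|_{L^{2d/(d-1)}}\|\nabla(\bu^n-\bu_r^n)\|_0$. For the second, I pull $\bu_r^n$ out in $L^\infty$ to get $\|(\bu_r^n\cdot\nabla)(\bu^n-\bu_r^n)\|_0\le \|\bu_r^n\|_\infty\|\nabla(\bu^n-\bu_r^n)\|_0$. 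Combining these yields $C_4$ with the stated dependence on $\|\bu_r^n\|_\infty$ and $\|\nabla\bu^n\|_{L^{2d/(d-1)}}$, which is finite by \eqref{bound_uh} and~\eqref{bound_ur}.
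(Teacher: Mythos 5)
Your proposal is correct and follows essentially the same route as the paper: skew-symmetry to keep only $\|\bu_r^n-\bu_h^n\|_0$ on the difference where it is convected, H\"older with exponents $2,\,2d/(d-1),\,2d$, the embedding $\|\bbeta\|_{L^{2d}}\le C\|\nabla\bbeta\|_0$ (for $d=2$ the paper obtains this via $H^{1/2}$-interpolation, since its stated version of \eqref{sob1} requires a strict inequality), and the uniform bounds \eqref{bound_uh}, \eqref{bound_ur}; your splitting is just the mirror image of the paper's, and your integration by parts of the grad-div contribution reproduces the identity $b(\bw,\bv,\bbeta)=\tfrac12((\bw\cdot\nabla)\bv,\bbeta)-\tfrac12((\bw\cdot\nabla)\bbeta,\bv)$ that the paper uses for the term with the difference as convecting field. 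The estimate \eqref{eq:nonli2} is handled exactly as in the paper.
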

\begin{proof}
We argue as in \cite[(83)]{novo_rubino}. Applying the skew-symmetric property of the trilinear form and \eqref{sob1}, we get
\begin{equation}
\label{nl00}
b(\bu_r^n,\bu_r^n,\bbeta)-b(\bu_h^n,\bu_h^n,\bbeta)=b(\bu_r^n,\bu_r^n-\bu_h^n,\bbeta)+b(\bu_r^n-\bu_h^n,\bu_h^n,\bbeta).
\end{equation}
Noticing that $b(\bu_r^n,\bu_r^n-\bu_h^n,\bbeta)=-b(\bu_r^n,\bbeta,\bu_r^n-\bu_h^n)$ and applying H\"older's
inequality gives
\begin{eqnarray}
\label{nla1}
\lefteqn{\nonumber
\left| b(\bu_r^n,\bu_r^n-\bu_h^n,\bbeta)\right\| } \\
&\le &
\left(\|\bu_r^n\|_\infty\|\nabla \bbeta\|_0+\frac{1}{2}\|\nabla \cdot\bu_r^n\|_{L^{2d/(d-1)}}\|\bbeta\|_{L^{2d}}\right)\|\bu_r^n-\bu_h^n\|_{0}
\nonumber\\
&\le&
C\left(\|\bu_r^n\|_\infty+\frac{1}{2}\|\nabla \cdot\bu_r^n\|_{L^{2d/(d-1)}}C_p^{\frac{3-d}{2}}\right)\|\nabla\bbeta\|_0\|\bu_r^n-\bu_h^n\|_{0},
\end{eqnarray}
where, in the last inequality, we have applied Sobolev's estimate~\eqref{sob1} and the Poincar\'e inequality
\eqref{poincare} to bound
$\|\bbeta\|_{L^{2d}}\le C \|\nabla \bbeta\|_0$ if $d=3$. And, if $d=2$, we used in addition a Sobolev
interpolation inequality to obtain $\|\bbeta\|_{L^{4}} \le C\|\bbeta\|_{1/2}
\le C(\|\bbeta\|_{0}\|\nabla\bbeta\|_{0})^{1/2} \le CC_p^{1/2} \|\nabla\bbeta\|_{0}$.
For the second term on the right-hand side of~\eqref{nl00}, by writing
\[ b(\bu_r^n-\bu_h^n,\bu_h^n,\bbeta)=\frac{1}{2} ((\bu_r^n-\bu_h^n)\cdot \nabla\bu_h^n,\bbeta) -\frac{1}{2}
((\bu_r^n-\bu_h^n)\cdot \nabla\bbeta,\bu_h^n),
\]
we find with similar arguments that
\begin{eqnarray}
\label{nla2}
\lefteqn{\left| b(\bu_r^n-\bu_h^n,\bu_h,\bbeta)\right|} \nonumber\\
&\le& C\left(
\|\nabla \bu_h^n\|_{L^{2d/(d-1)}}\|\bbeta\|_{L^{2d}} + \| \bu_h^n\|_{\infty}\|\nabla\bbeta\|_{0}\right) \|\bu_r^n-\bu_h^n\|_{0}
\nonumber\\
&\le& C\left( \|\nabla \bu_h^n\|_{L^{2d/(d-1)}}C_p^{\frac{3-d}{2}} + \|\bu_h^n\|_{\infty}\right)\|\nabla\bbeta\|_{0} \|\bu_r^n-\bu_h^n\|_{0}.
\end{eqnarray}
Thus, \eqref{eq:nonli} follows from \eqref{nla1}, \eqref{nla2} and by applying estimates~\eqref{bound_uh}, \eqref{bound_ur}
one obtains that the constant $C_3$ is bounded.

Similarly, by using the identity
\[
(\bu^n\cdot\nabla) \bu^n-(\bu_r^n\cdot \nabla) \bu_r^n = ((\bu^n-\bu_r^n)\cdot\nabla) \bu^n+(\bu_r^n\cdot \nabla) (\bu^n - \bu_r^n),
\]
it follows that
\begin{eqnarray*}
\left\|(\bu^n\cdot\nabla) \bu^n-(\bu_r^n\cdot \nabla) \bu_r^n\right\|_0
&\le& \left\| \bu^n - \bu_r^n\right\|_{L^{2d}} \left\|\nabla \bu^n\right\|_{L^{2d/(d-1)}}
\nonumber\\
&& + \left\|\bu_r^n\right\|_{\infty} \left\|\nabla(\bu^n - \bu_r^n)\right\|_0.
\end{eqnarray*}
Then, \eqref{eq:nonli2} is obtained from  the fact that, as a consequence of~\eqref{sob1} and~\eqref{poincare},
$\left\| \bu^n - \bu_r^n\right\|_{L^{2d}}\le C C_p^{(3-d)/2} \left\|\nabla (\bu^n - \bu_r^n)\right\|_0$.
From \eqref{bound_ur}, one infers the boundedness of
the constant in \eqref{eq:nonli2}.
\end{proof}

\begin{remark}\label{re:C3} We notice
that~\eqref{eq:nonli} is valid if either~$\bu_h^n$ or~$\bu_r^n$ is replaced by~$\bu$, because that proof did
not apply any argument that is valid only for finite element functions.
\end{remark}

In \cite[Appendix]{wir_NS},  It is shown  that
\[
\int_{0}^T \|\partial_t\bu(t)-\partial_t\bu_h(t)\|_0^2\ dt \le C_Ah^{2l-2}.
\]
With similar arguments, one can also prove the following lemma.

\begin{lema} \label{le:bosco} There exists a constant $C_A$ such that for $\Delta t \le Ch$ the following bound holds
\begin{equation}\label{pre_defi_1}
 \sum_{j=1}^n\Delta t \|\partial_t\bu^j-\partial_t\bu_h^j\|_0^2\le C_A   h^{2{l-2}}.
\end{equation}
\end{lema}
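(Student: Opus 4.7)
The plan is to reduce the discrete sum at the time nodes $t_j$ to continuous-in-time integrals that can either be estimated by the result already quoted in \cite[Appendix]{wir_NS} just before the lemma, or by a close variant thereof. Since $\partial_t\bu^j-\partial_t\bu_h^j$ does not depend on the time variable, the trivial identity
$$
\Delta t\,\|\partial_t\bu^j-\partial_t\bu_h^j\|_0^2
= \int_{t_{j-1}}^{t_j}\|\partial_t\bu^j-\partial_t\bu_h^j\|_0^2\,dt
$$
holds. Combining this with the decomposition
$$
\partial_t\bu^j-\partial_t\bu_h^j
= \bigl(\partial_t\bu(t)-\partial_t\bu_h(t)\bigr)
+ \int_t^{t_j}\bigl(\partial_{tt}\bu(s)-\partial_{tt}\bu_h(s)\bigr)\,ds,
\qquad t\in[t_{j-1},t_j],
$$
using the inequality $(a+b)^2\le 2a^2+2b^2$ and Cauchy--Schwarz on the integral remainder, and summing over $j=1,\ldots,n$, one obtains the key inequality
$$
\sum_{j=1}^n\Delta t\,\|\partial_t\bu^j-\partial_t\bu_h^j\|_0^2
\le 2\int_0^{T}\|\partial_t\bu-\partial_t\bu_h\|_0^2\,dt
+ 2(\Delta t)^2\int_0^{T}\|\partial_{tt}\bu-\partial_{tt}\bu_h\|_0^2\,dt.
$$

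The first integral on the right is bounded by $C\,h^{2l-2}$ by the continuous-in-time estimate from \cite[Appendix]{wir_NS} that is quoted in the excerpt immediately before the lemma. For the second integral, I would replay that same appendix argument with one additional time differentiation. Namely, differentiating both the continuous system \eqref{NS} and the FOM in its discretely divergence-free form \eqref{eq:gal_grad_div2} twice with respect to time yields evolution equations for $\partial_{tt}\bu$ and $\partial_{tt}\bu_h$ that are structurally identical to the original velocity equations, up to additional convective cross-terms such as $b(\partial_t\bu,\partial_t\bu,\bv_h)$ and $b(\partial_{tt}\bu,\bu,\bv_h)$. Writing the corresponding error with respect to the modified Stokes projection $\bs_h^m(\partial_{tt}\bu)$ defined in \eqref{stokespro_mod_def}, testing with the associated discretely divergence-free error, and applying the standard grad-div energy argument together with Gronwall's inequality should yield a bound of the form
$$
\int_0^T\|\partial_{tt}\bu(t)-\partial_{tt}\bu_h(t)\|_0^2\,dt \le C\,h^{2l-4}.
$$
Under the hypothesis $\Delta t\le Ch$, this gives $2(\Delta t)^2\int_0^T\|\partial_{tt}\bu-\partial_{tt}\bu_h\|_0^2\,dt \le C\,h^{2l-2}$, and the lemma follows.

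The main obstacle will be deriving the $h^{2l-4}$ bound for the twice-time-differentiated error in a convection-robust manner. The new cross-terms coming from the nonlinear form require pointwise-in-time control of $\partial_t\bu$ and $\partial_t\bu_h$ in norms such as $L^\infty$ or $L^{2d/(d-1)}$, analogous to the bounds \eqref{bound_uh}--\eqref{bound_Pr} that are already available for $\bu$ and $\bu_h$ themselves; these rest on higher time regularity of $\bu$, which is the standard hypothesis whenever $\partial_{ttt}\bu$ needs to be square integrable. The crucial book-keeping task is to verify that none of the manipulations introduces a factor of $\nu^{-1}$, so that the constant $C_A$ remains independent of inverse powers of the viscosity, in line with the convection-robust framework used throughout the paper.
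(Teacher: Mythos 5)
Your reduction of the nodal sum is fine: the splitting
\begin{equation*}
\sum_{j=1}^n\Delta t\,\|\partial_t\bu^j-\partial_t\bu_h^j\|_0^2
\le 2\int_0^{T}\|\partial_t\bu-\partial_t\bu_h\|_0^2\,dt
+ 2(\Delta t)^2\int_0^{T}\|\partial_{tt}\bu-\partial_{tt}\bu_h\|_0^2\,dt
\end{equation*}
is correct, and it is the same kind of quadrature argument the paper uses (the paper compares $\Delta t\|\partial_t\be_h^j\|_0^2$ with $\int_{t_{j-1}}^{t_j}\|\partial_t\be_h\|_0^2\,dt$ via the fundamental theorem of calculus and Cauchy--Schwarz, which produces exactly the same first-derivative/second-derivative pairing). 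The genuine gap is the second integral: you need $\int_0^T\|\partial_{tt}(\bu-\bu_h)\|_0^2\,dt\le Ch^{2l-4}$, but you do not prove it. What you offer instead is a program --- differentiate \eqref{NS} and \eqref{eq:gal_grad_div2} twice in time, redo a convection-robust energy/Gronwall argument, and establish new pointwise-in-time $L^\infty$/$L^{2d/(d-1)}$ control of $\partial_t\bu$ and $\partial_t\bu_h$ analogous to \eqref{bound_uh}--\eqref{bound_Pr}. That is a substantial piece of analysis, not a routine adaptation: the cross-terms you mention, the required bounds on the differentiated discrete solution, and the extra regularity hypotheses (square-integrable $\partial_{ttt}\bu$, regularity of $\partial_{tt}\bu$ in space) are precisely where the work lies, and none of it is carried out. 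As written, the proof is incomplete at its decisive step.

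The gap is also unnecessary, and this is where your route diverges from the paper's. The paper never touches the twice-differentiated error equation: it works with $\be_h=\bu_h-\bs_h^m$, where $\bs_h^m$ is the modified Stokes projection \eqref{stokespro_mod_def}, because the bounds $\int_0^T\|\partial_t\be_h\|_0^2\,ds\le Ch^{2(l-1)}$ and $\int_0^T\|\partial_{tt}\be_h\|_0^2\,ds\le Ch^{2(l-2)}$ are already proved in \cite[(A.1)-(A.2)]{wir_NS}, and the remaining piece $\partial_t\bu-\partial_t\bs_h^m$ (the projection commutes with time differentiation) is handled by \eqref{stokespro_mod}. Your argument closes the same way with no new PDE analysis if, in the second integral, you insert $\pm\,\partial_{tt}\bs_h^m$ and invoke \cite[(A.2)]{wir_NS} together with \eqref{stokespro_mod} applied to $\partial_{tt}\bu$; then $(\Delta t)^2\,h^{2(l-2)}\le Ch^{2l-2}$ under $\Delta t\le Ch$, exactly as in the paper. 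Without either that substitution or a full execution of your differentiated-equation program, the proposal does not yet constitute a proof.
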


\begin{proof}
We will prove the following bound for $\be_h=\bu_h-\bs_h^m$, with $\bs_h^m$ being the modified Stokes projection defined in \eqref{stokespro_mod_def}
\begin{equation}
\label{cota}
\sum_{j=1}^n \Delta t \left\| \partial_t\be_h^j\right\|_0^2 \le C h^{2(l-1)}.
\end{equation}
From \eqref{cota} and \eqref{stokespro_mod} we reach \eqref{pre_defi_1}.

To derive \eqref{cota}, we first show that
\begin{equation}
\label{cota2}
\sum_{j=1}^n \left| \Delta t \left\| \partial_t\be_h^j\right\|_0^2 -\int_{t_{j-1}}^{t_j} \left\| \partial_t\be_h(t)\right\|_0^2\,dt   \right|\le C h^{2(l-3/2)}\Delta t,
\end{equation}
from which, applying~\eqref{cota3} below, the estimate~\eqref{cota} follows as long as $\Delta t\le Ch$.

To prove~\eqref{cota2}, we first apply the fundamental theorem of calculus and the Cauchy--Schwarz inequality, to obtain
\begin{align*}
\biggr| \Delta t \left\| \partial_t\be_h^j\right\|_0^2 & -\int_{t_{j-1}}^{t_j} \left\| \partial_t\be_h(t)\right\|_0^2\ dt \biggr| =\left|\int_{t_{j-1}}^{t_j}  \left(\left\| \partial_t\be_h(t_j)\right\|_0^2-\left\| \partial_t\be_h(t)\right\|_0^2\right)\ dt\right|\\
&=
\left|\int_{t_{j-1}}^{t_j} \int_{t}^{t_j} 2\left( \partial_t \be_h,\partial_{tt}\be_h\right)\ ds\ dt\right|
\nonumber\\
&\le 2\int_{t_{j-1}}^{t_j} \biggl(\int_{t_{j-1}}^{t_j} \left\| \partial_t\be_h\right\|_0^2\ ds\biggr)^{1/2}
 \biggl(\int_{t_{j-1}}^{t_j }\left\| \partial_{tt}\be_h\right\|_0^2\ ds\biggr)^{1/2}\ dt
 \nonumber\\
 &=2\Delta t\biggl(\int_{t_{j-1}}^{t_j} \left\| \partial_t\be_h\right\|_0^2\ ds\biggr)^{1/2}
 \biggl(\int_{t_{j-1}}^{t_j }\left\| \partial_{tt}\be_h\right\|_0^2\ ds\biggr)^{1/2}.
\nonumber\\
\end{align*}
Now, applying the Cauchy--Schwarz inequality for sums yields
\begin{align*}
\sum_{j=1}^n \biggr| \Delta t \left\| \partial_t\be_h^j\right\|_0^2 &-\int_{t_{j-1}}^{t_j} \left\| \partial_t\be_h(t)\right\|_0^2\,dt   \biggr|
\nonumber\\
 &\le 2\Delta t \biggl(\sum_{j=1}^n
\int_{t_{j-1}}^{t_j} \left\| \partial_t\be_h\right\|_0^2\ ds\biggr)^{1/2}
\biggl(\sum_{j=1}^n \int_{t_{j-1}}^{t_j }\left\| \partial_{tt}\be_h\right\|_0^2\ ds\biggr)^{1/2}
\nonumber\\
& = 2\Delta t \biggl(\int_{t_{0}}^{t_n} \left\| \partial_t\be_h\right\|_0^2\ ds\biggr)^{1/2}
 \biggl(\int_{t_{0}}^{t_n }\left\| \partial_{tt}\be_h\right\|_0^2\ ds\biggr)^{1/2}.
 \nonumber
\end{align*}
Since the bounds
\begin{equation}
\label{cota3}
\int_{t_{0}}^{t_n} \left\| \partial_t\be_h\right\|_0^2\ ds \le C h^{2(l-1)}, \quad
\int_{t_{0}}^{t_n }\left\| \partial_{tt}\be_h\right\|_0^2\ ds \le C h^{2(l-2)}
\end{equation}
are proved in \cite[(A.1)-(A.2)]{wir_NS}, we conclude \eqref{cota2}.
\end{proof}

\begin{remark}
The assumption $\Delta t \le C h$ required in the proof of Lemma \ref{le:bosco} can be suppressed with a more complicated proof and for that reason it will not be assumed in the rest of the paper. We have decided to include a simpler proof to show \eqref{pre_defi_1} in order not to increase the length of the paper.
 \end{remark}

\section{Reduced Order Pressure Approximations}\label{sec:press_rom_ana}

In this section, we analyze two different approaches for approximating the pressure within
the context of a POD-ROM, namely the supremizer enrichment algorithm following \cite{novo_rubino}
and a stabilization-motivated approach introduced in \cite{John_et_al_vp}.

Define the space spanned by the snapshots of the FOM pressure by
\[
{\mathcal  W}=\mbox{span}\left\{p_h^1,\ldots,p_h^M\right\},
\]
i.e.,
the situation of using
the FOM pressure solutions at all time instants, after the initial time, i.e., $M=n$, will be considered.
For the sake of brevity, we consider only the situation that the $L^2(\Omega)$ inner product is used for the pressure. 
Thus, let $K^{\mathrm p}$ be the correlation matrix
$K^{\mathrm p}=((k_{i,j}^{\mathrm{p}}))\in {\mathbb R}^{M\times M}$ with
\[
k_{i,j}^{\mathrm{p}}=\frac{1}{M}\left(p_h^{j},p_h^{i}\right).
\]
We denote by
$ \gamma_1\ge  \gamma_2,\ldots\ge \gamma_{d_p}>0$ the positive eigenvalues of $K^{\mathrm p}$ and by
$\bw_1,\ldots,\bw_{d_p}\in {\mathbb R}^{M}$ the associated eigenvectors.
An orthonormal basis of $\mathcal  W$ is then given by
\begin{equation*}
\psi_k=\frac{1}{\sqrt{M}}\frac{1}{\sqrt{\gamma_k}}\sum_{j=1}^M w_k^j p_h(t_{j},\cdot),
\end{equation*}
where $w_k^j$  is the $j$-th component of  $\bw_k$. The following relation is proved in
\cite[Proposition~1]{kunisch}.
\begin{equation}\label{eq:cota_pod_0_pre}
\frac{1}{M}\sum_{j=1}^M\left\|p_h^{j}-\sum_{k=1}^r\left(p_h^{j},\psi_k\right)\psi_k\right\|_{0}^2 = \sum_{k=r+1}^{d_p}\gamma_k.
\end{equation}
Similarly as for the velocity, we define the stiffness matrix  $S^{\mathrm{p}}=((s_{i,j}^{\mathrm{p}}))\in {\mathbb R}^{d_p\times d_p}$, with
$s_{i,j}^{\mathrm{p}}=(\nabla \psi_j,\nabla \psi_i)$. Then the analog to \eqref{eq:inv_S} is 
\begin{equation}\label{eq:inv_Sp}
\|\nabla q \|_0\le \sqrt{\|S^{\mathrm{p}}\|_2}\|q\|_0\quad \forall\ q \in {\mathcal W}.
\end{equation}

The POD-ROM pressure space will be denoted by
\[
{\mathcal  W}^r= \mbox{span}\left\{\psi_1,\psi_2,\ldots,\psi_r\right\},\quad 1\le r\le d_p,
\]
where $r$ is the same dimension as for the velocity POD-ROM space.  The  orthogonal projection onto ${\mathcal  W}^r$
with respect to the $L^2(\Omega)$ inner product is denoted by $P_r^{\mathrm{p}}\ : \  L^2_0(\Omega)  \to {\mathcal  W}^r$.

\subsection{A Supremizer Enrichment Algorithm}

Following \cite{schneier,novo_rubino} we study a first way to compute a POD-ROM pressure approximation.
The 
main
idea of the supremizer enrichment algorithm consists in computing a basis of a $r$-dimensional subspace of the
orthogonal complement (with respect to the inner product in~$H^1_0(\Omega)^d$) of~${\boldsymbol V_h^l}$,
and then to compute a POD-ROM pressure via the discrete
momentum equation, where the already computed POD-ROM velocity is an input data.

Given a function $p_r\in{\mathcal  W}^r$, we consider the following problem: find $\bw_h\in {\boldsymbol X}_h^l$ such that
\begin{equation*}
(\nabla \bw_h,\nabla\bv_h)=-(\nabla \cdot \bv_h,p_r),\quad \forall\ \bv_h \in {\boldsymbol X}_h^l.
\end{equation*}
Solving this equation for each basis function $\psi_k$, $k=1,\cdots,r$, gives a set of linearly independent
solutions. Since $p_r\in Q_h^l$, it follows that
\[
(\nabla \bw_h,\nabla\bv_h)=0,\quad \forall\ \bv_h \in {\boldsymbol V}_h^l.
\]
Applying a Gram--Schmidt orthonormalization procedure to the set of linearly independent solutions, with respect to the inner product of $H^1_0(\Omega)^d$,
results in a set of basis functions $\left\{\bzeta_1,\bzeta_2,\ldots,\bzeta_{r}\right\}$. Denoting by
\[
{\boldsymbol S}^r=\mbox{span}\left\{\bzeta_1,\bzeta_2,\ldots,\bzeta_{r}\right\}\subset\left({\boldsymbol V}_h^l\right)^\bot
\subset {\boldsymbol X}_h^l,
\]
then the following inf-sup stability condition holds for the spaces ${\boldsymbol S}^{r}$ and ${\mathcal  W}^r$, see \cite[Lemma~4.2]{schneier}:
\begin{equation}\label{eq:inf_sup_supre}
\beta_{r}= \inf_{\psi\in {\mathcal  W}^r}\sup_{\bzeta\in{{\boldsymbol S}^{r}}}
\frac{(\nabla \cdot \bzeta,\psi)}{\|\nabla\bzeta\|_{0}
\|\psi\|_0} \geq \beta_{\rm{is}},
\end{equation}
where $\beta_{\rm{is}}$ is the constant in the inf-sup condition \eqref{lbbh}.
Using the space ${\boldsymbol S}^{r}$, a pressure $p_r^n\in {\mathcal  W}^r$ can be computed satisfying for all $\bzeta\in {\boldsymbol S}^{r}$
\begin{eqnarray}\label{eq:pres}
(\nabla \cdot \bzeta,p_r^n)=\left(\frac{\bu_r^{n}-\bu_r^{n-1}}{\Delta t},\bzeta\right)+b(\bu_r^n,\bu_r^n,\bzeta)
+\mu(\nabla \cdot \bu_r^n,\nabla \cdot \bzeta)
-(\bff^{n},\bzeta).
\end{eqnarray}
Notice that the viscous term is not present since $\bzeta\in {\boldsymbol S}^{r}\subset
({\boldsymbol V}_h^l)^\bot$ and~$\bu_h^n\in{\boldsymbol V}_h^l$ implies that $\nu(\nabla \bu_h^n,
\nabla \bzeta)=0$.

Define the following norms
\begin{equation}\label{eq:supr_alg_norms}
\|\bv\|_{{\boldsymbol S}^{r,*}}=\sup_{\bzeta\in {\boldsymbol S}^{r}}\frac{(\bv,\bzeta)}{\|\nabla \bzeta\|_0},
\quad
\|\bv\|_{{\mathcal  \bU}^{r,*}}=\sup_{\bzeta\in {\mathcal  \bU}^{r}}\frac{(\bv,\bzeta)}{\|\nabla \bzeta\|_0},
\end{equation}
denote  by $\alpha \in [0,1)$ the constant in the strengthened Cauchy--Schwarz inequality between the spaces ${\mathcal  \bU}^r$ and ${\boldsymbol S}^{r}$, that is,
\[
\left|({\boldsymbol\zeta},\boldsymbol{\varphi})\right|\le \alpha\left\|{\boldsymbol\zeta} \right\|_0\left\| {\boldsymbol\varphi}\right\|_0,\qquad {\boldsymbol\zeta}\in{\boldsymbol S}^r,\quad{\boldsymbol\varphi}\in {\boldsymbol U}^r,
\]
and by
\begin{equation}\label{cotaCr}
C_r^{H^1}= \sum_{k=1}^r\|\nabla \bvar_k\|_0.
\end{equation}
The right-hand side in \eqref{cotaCr} is bounded by  $ r \|S^{\mathrm{v}}\|_2^{1/2}$ in the case $X=L^2(\Omega)^d$ and
by $r$ in the case $X=H_0^1(\Omega)^{d\times d}$, for the velocity projections.
Since in practice $\|S^{\mathrm{v}}\|_2 > 1$, e.g., see \cite{novo_rubino},
the constant $C_r^{H^1}$ is usually smaller in the case $X=H_0^1(\Omega)^{d\times d}$.
Letting $\bvar\in {\mathcal  \bU}^{r}$, it holds, see \cite[Lemma~5.14]{schneier},
\begin{eqnarray}\label{eq:cota_dual}
\|\bvar\|_{{\boldsymbol S}^{r,*}}\le \alpha C_p C_r^{H^1}\|\bvar\|_{{\mathcal  \bU}^{r,*}},
\end{eqnarray}
where $C_p$ is the constant in the Poincar\'e inequality \eqref{poincare}.

\begin{Theorem} There exists a constant~$C>0$ such that for using $X=H^1_0(\Omega)^{d\times d}$ the following bound holds
\begin{eqnarray}\label{eq:cota_pre_supr}
\lefteqn{\sum_{j=1}^n\Delta t\left\|  p^j-p_r^j\right\|_{0}^2 \le 2C_{\rm press}^2(\bu,p,l+1) h^{2l-1}
+C\frac{T}{\beta_r^2}\sum_{k=r+1}^{d_p}\gamma_k }
\\
&&  + C\left(1+\frac{C_pC_r^{H^1}}{\beta_r}\right)\left[ (TC_3^2+\nu+\mu )C_{\ref{eq:pre_cota_finalSUPv}} +
(C_3^2C_p^2+\nu^2+\mu^2)T C_{\ref{eq:cota_pod_0}} \right], \nonumber
\end{eqnarray}
where $C_{\ref{eq:pre_cota_finalSUPv}}$ and $C_{\ref{eq:cota_pod_0}}$ are the bounds on the right-hand sides of
\eqref{eq:pre_cota_finalSUPv} and \eqref{eq:cota_pod_0}, respectively, $C_3$ is
the constant in~\eqref{eq:nonli}, and $C_{\rm press}$ is the constant in~\eqref{eq:cota_pre}

If $X=L^2(\Omega)^d$, the same bound holds if $C_{\ref{eq:pre_cota_finalSUPv}}$ is replaced by the bound on
the right-hand side of \eqref{eq:pre_error3_b_L2} and the last term is replaced by
\[
(C_3^2 +(\nu^2+\mu^2)c_{\mathrm{inv}}^2h^{-2})T C_{\ref{eq:cota_pod_0}}.
\]
\end{Theorem}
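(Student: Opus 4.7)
The plan is to combine a triangle-inequality splitting with the inf--sup stability~\eqref{eq:inf_sup_supre} of the supremizer space~${\boldsymbol S}^r$. I first split
\[
\|p^n-p_r^n\|_0 \le \|p^n-p_h^n\|_0 + \|p_h^n-P_r^{\mathrm{p}} p_h^n\|_0 + \|P_r^{\mathrm{p}} p_h^n-p_r^n\|_0,
\]
square, weight by~$\Delta t$ and sum in~$n$. The first summand contributes the $C_{\mathrm{press}}^2 h^{2l-1}$ term via~\eqref{eq:cota_pre}, and the second a $T\sum_{k>r}\gamma_k$ contribution via~\eqref{eq:cota_pod_0_pre}. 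The third summand lies in~$\mathcal{W}^r$, so~\eqref{eq:inf_sup_supre} gives
\[
\beta_r\|P_r^{\mathrm{p}} p_h^n - p_r^n\|_0 \le \sup_{\bzeta\in{\boldsymbol S}^r}\frac{(\nabla\cdot\bzeta,\,P_r^{\mathrm{p}} p_h^n - p_r^n)}{\|\nabla\bzeta\|_0},
\]
and inserting~$\pm p_h^n$ in the numerator yields a further POD-projection-error contribution which, together with the second summand and the $1/\beta_r^2$ factor, accounts for the announced $T/\beta_r^2\sum\gamma_k$ term; what remains to estimate is the residual $(\nabla\cdot\bzeta,\,p_h^n-p_r^n)/\|\nabla\bzeta\|_0$.

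For this residual I derive an identity by testing the FOM momentum equation at~$t_n$ and the supremizer equation~\eqref{eq:pres} against the same $\bzeta\in{\boldsymbol S}^r\subset {\boldsymbol X}_h^l$ and subtracting. Since $\bu_h^n,\bu_r^n\in {\boldsymbol V}_h^l$ and $\bzeta\in({\boldsymbol V}_h^l)^\perp$ in the $H_0^1$-inner product, the two viscous inner products cancel and only
\begin{align*}
(\nabla\cdot\bzeta,\, p_h^n-p_r^n) &= \biggl(\partial_t\bu_h^n - \tfrac{\bu_r^n-\bu_r^{n-1}}{\Delta t},\,\bzeta\biggr) + \bigl[b(\bu_h^n,\bu_h^n,\bzeta)-b(\bu_r^n,\bu_r^n,\bzeta)\bigr] \\
&\quad + \mu\bigl(\nabla\cdot(\bu_h^n-\bu_r^n),\,\nabla\cdot\bzeta\bigr)
\end{align*}
remains. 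By Lemma~\ref{le:nonli} the nonlinear piece is bounded by $C_3\|\bu_h^n-\bu_r^n\|_0\|\nabla\bzeta\|_0$, and, using~\eqref{diver_vol}, the divergence piece by $\mu\|\nabla(\bu_h^n-\bu_r^n)\|_0\|\nabla\bzeta\|_0$. Writing $\bu_h^n-\bu_r^n = (\bu_h^n-P_r^{\mathrm{v}}\bu_h^n) - \be_r^n$, each contribution splits into a POD-projection part controlled by $C_{\ref{eq:cota_pod_0}}$ (through Poincar\'e when $X=H_0^1(\Omega)^{d\times d}$ and through the inverse inequality~\eqref{inv} when $X=L^2(\Omega)^d$, the latter producing the $c_{\mathrm{inv}}^2 h^{-2}$ factor in the $L^2$ statement) and a ROM-error part controlled by $C_{\ref{eq:pre_cota_finalSUPv}}$.

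The main obstacle is the time-derivative term in the ${\boldsymbol S}^{r,*}$-norm, because the difference $\partial_t\bu_h^n - (\bu_r^n-\bu_r^{n-1})/\Delta t$ does not lie in~${\mathcal \bU}^r$ and so~\eqref{eq:cota_dual} cannot be invoked directly. Using the linearity identity $d_t P_r^{\mathrm{v}}\bu_h^n = P_r^{\mathrm{v}} d_t\bu_h^n$ with $d_t v^n := (v^n-v^{n-1})/\Delta t$, I split
\[
\partial_t\bu_h^n - \frac{\bu_r^n-\bu_r^{n-1}}{\Delta t} = \bigl(\partial_t\bu_h^n - d_t\bu_h^n\bigr) + (I - P_r^{\mathrm{v}})\,d_t\bu_h^n \;-\; d_t\be_r^n.
\]
Only $d_t\be_r^n$ lies in~${\mathcal \bU}^r$; to it I apply~\eqref{eq:cota_dual} to obtain $\|d_t\be_r^n\|_{{\boldsymbol S}^{r,*}}\le \alpha C_pC_r^{H^1}\|d_t\be_r^n\|_{{\mathcal \bU}^{r,*}}$, and I bound the latter by testing the difference of the FOM and ROM velocity equations against $\bvar\in{\mathcal \bU}^r\subset {\boldsymbol V}_h^l$ (using $(d_t P_r^{\mathrm{v}}\bu_h^n,\bvar)=(d_t\bu_h^n,\bvar)$ when $X=L^2$, with the analogous adjustment when $X=H_0^1$), obtaining
\[
\|d_t\be_r^n\|_{{\mathcal \bU}^{r,*}} \le C_p\|\partial_t\bu_h^n - d_t\bu_h^n\|_0 + \nu\|\nabla(\bu_h^n-\bu_r^n)\|_0 + C_3\|\bu_h^n-\bu_r^n\|_0 + \mu\|\nabla(\bu_h^n-\bu_r^n)\|_0.
\]
The viscous contribution reappears here because $\bvar\in {\boldsymbol V}_h^l$ affords no $H_0^1$-orthogonality to exploit, and this is the sole origin of the $\nu$ and $\nu^2$ factors in the theorem. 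The first two summands of the splitting are bounded in ${\boldsymbol S}^{r,*}$ by~$C_p$ times their $L^2$ (or $X$) norms and, after summation with weight~$\Delta t$, are absorbed into $C_{\ref{eq:cota_pod_0}}$ via its $(\Delta t)^2\int_0^T\|\partial_{tt}\bu_h\|_X^2\,ds$ term and the tail bound~\eqref{eq:cota_pod_deriv}, exploiting the lower bound $\rho^2\ge 24T^2/\tau^2$ from~\eqref{rho}.

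Reassembling and squaring, the $1/\beta_r$ from the inf--sup combines with the $\alpha C_pC_r^{H^1}$ from~\eqref{eq:cota_dual} to produce the prefactor $(1+C_pC_r^{H^1}/\beta_r)$ in front of the velocity-error group. Tracking the $\be_r^n$, $\nabla\be_r^n$ and $\nabla\cdot\be_r^n$ contributions against~$C_{\ref{eq:pre_cota_finalSUPv}}$, which already carries $\|\be_r^n\|_0^2$, $2\nu\sum\Delta t\|\nabla\be_r^n\|_0^2$ and $\mu\sum\Delta t\|\nabla\cdot\be_r^n\|_0^2$, reproduces the combination $(TC_3^2+\nu+\mu)C_{\ref{eq:pre_cota_finalSUPv}}$, while the POD-projection terms supply $(C_3^2C_p^2+\nu^2+\mu^2)TC_{\ref{eq:cota_pod_0}}$. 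For the $X=L^2$ variant the passage to the $H_0^1$-norm on the POD projection is made via~\eqref{inv}, turning $C_p^2$ into $c_{\mathrm{inv}}^2h^{-2}$ inside the $(\nu^2+\mu^2)$ coefficient, as stated.
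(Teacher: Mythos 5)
Your argument is correct in substance and follows the paper's skeleton in most places: the inf--sup condition \eqref{eq:inf_sup_supre} on ${\boldsymbol S}^r$, the error identity obtained by testing the FOM momentum equation and \eqref{eq:pres} with the same $\bzeta$ (with the viscous term vanishing since $\bzeta\in({\boldsymbol V}_h^l)^\perp$), Lemma~\ref{le:nonli} for the convection, and the final assembly from \eqref{eq:cota_pre}, \eqref{eq:cota_pod_0_pre}, \eqref{eq:pre_cota_finalSUPv}/\eqref{eq:pre_error3_b_L2} and \eqref{eq:cota_pod_0}. Where you genuinely deviate is the discrete time-derivative term. The paper keeps the whole difference $\partial_t\bu_h^n-(\bu_r^n-\bu_r^{n-1})/\Delta t$, converts ${\boldsymbol S}^{r,*}$ to ${\mathcal \bU}^{r,*}$ via \eqref{eq:cota_dual}, and bounds that dual norm by subtracting the continuous-in-time FOM equation \eqref{eq:gal_grad_div2} at $t_n$ from \eqref{eq:pod_method2}, so only the spatial terms $\nu\|\nabla(\bu_h^n-\bu_r^n)\|_0$, $\mu\|\nabla\cdot(\bu_h^n-\bu_r^n)\|_0$, $C_3\|\bu_h^n-\bu_r^n\|_0$ appear and no consistency or tail terms arise at this stage. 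You instead split off $d_t\be_r^n$, apply \eqref{eq:cota_dual} only to it (your point that the full difference is not in ${\mathcal \bU}^r$, so \eqref{eq:cota_dual} does not literally apply, is a fair criticism of the step the paper inherits from the literature), and bound $\partial_t\bu_h^n-d_t\bu_h^n$ and $(I-P_r^{\mathrm{v}})d_t\bu_h^n$ crudely in ${\boldsymbol S}^{r,*}$ by $C_p$ times their $L^2$ norms. This buys a cleaner use of \eqref{eq:cota_dual}, but it costs you extra additive terms of the form $C\beta_r^{-2}C_p^2\bigl(T\tau^{-2}\sum_{k>r}\lambda_k+(\Delta t)^2\int_0^T\|\partial_{tt}\bu_h\|_X^2\,ds\bigr)$, which indeed need an estimate of the type \eqref{pre_defi_2} and the $\rho$-bound from \eqref{rho}; these terms are robust and of the right order, but they are not literally of the stated form $(C_3^2C_p^2+\nu^2+\mu^2)TC_{\ref{eq:cota_pod_0}}$ and can only be hidden in the generic constant if $C$ is allowed to depend on $T$ (it is) --- you should say this explicitly rather than claim they are ``absorbed into $C_{\ref{eq:cota_pod_0}}$''.

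Two smaller points to tighten. First, in the grad-div term apply \eqref{diver_vol} only to $\bzeta$ and to the projection part $\bu_h^n-P_r^{\mathrm{v}}\bu_h^n$, not to $\be_r^n$: if you replace $\mu\|\nabla\cdot\be_r^n\|_0$ by $\mu\|\nabla\be_r^n\|_0$ you can only use the $2\nu\sum_j\Delta t\|\nabla\be_r^j\|_0^2$ part of \eqref{eq:pre_cota_finalSUPv} and pick up a factor $\mu^2/\nu$, which would destroy the robustness; your final tracking against the $\mu\sum_j\Delta t\|\nabla\cdot\be_r^j\|_0^2$ part is the correct move, so make the intermediate inequality consistent with it. Second, for $X=H_0^1(\Omega)^{d\times d}$ the orthogonality $(d_t\bu_h^n-P_r^{\mathrm{v}}d_t\bu_h^n,\bvar)=0$ fails, so the ``analogous adjustment'' you allude to is precisely another tail/second-derivative contribution of the above type and should be written out rather than waved at.
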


\begin{proof} The proof is obtained arguing as in \cite[Theorem 5.4]{novo_rubino}.

Adding and subtracting $(\nabla \cdot \bzeta,P_r^{\mathrm{p}} p_h^n)$
in the FOM problem \eqref{eq:gal_grad_div} at time $t_n$ and observing again that the viscous
term vanishes, yields
\begin{eqnarray}\label{eq:p_hres}
\left(\nabla \cdot \bzeta,P_r^{\mathrm{p}} p_h^n\right)&=&\left(\partial_t\bu_h^n,\bzeta\right)
+b(\bu_h^n,\bu_h^n,\bzeta)
+\mu(\nabla \cdot \bu_h^n,\nabla \cdot \bzeta)
-(\bff^{n},\bzeta)\nonumber\\
&&+\left(\nabla \cdot \bzeta, P_r^{\mathrm{p}} p_h^n-p_h^n\right) \quad \forall\ \bzeta\in {\boldsymbol S}^{r}.
\end{eqnarray}
Decomposing the error $p_r^n-p_h^n = z_r^n + \xi_h^n$ with
\[
z_r^n=p_r^n-P_r^{\mathrm{p}} p_h^n \in \mathcal  W^r,\quad \xi_h^n=P_r^{\mathrm{p}} p_h^n-p_h^n \in \mathcal W,
\]
subtracting the POD-ROM problem \eqref{eq:p_hres} from \eqref{eq:pres} and applying \eqref{eq:inf_sup_supre}, \eqref{eq:supr_alg_norms},
\eqref{eq:cota_dual},
the Cauchy--Schwarz inequality, and  \eqref{diver_vol} leads to
\begin{eqnarray}\label{eq:erpre1}
\|z_r^n\|_0 &\le& \frac{1}{\beta_{r}}\left(\alpha C_p C_r^{H^1} \left\|\frac{\bu_r^n-\bu_r^{n-1}}{\Delta t }-\partial_t\bu_h^n\right\|_{{\mathcal  \bU}^{r,*}}\right.\\
&&\left. + \sup_{\bzeta\in {\boldsymbol S}^{r}}\frac{b(\bu_r^n,\bu_r^n,\bzeta)-b(\bu_h^n,\bu_h^n,\bzeta)}{\|\nabla \bzeta\|_0}
+\mu  \|\nabla \cdot(\bu_r^n-\bu_h^n)\|_0+\|\xi_h^n\|_0\right).\nonumber
\end{eqnarray}
We will now bound the terms on the right-hand side of \eqref{eq:erpre1}.
For the first one, we subtract the velocity FOM equation \eqref{eq:gal_grad_div2} from
the velocity POD-ROM equation \eqref{eq:pod_method2} for an arbitrary test function from $\mathcal  \bU^r$
to get
\begin{eqnarray*}
\left\|\frac{\bu_r^n-\bu_r^{n-1}}{\Delta t }-\partial_t\bu_h^n\right\|_{{\mathcal  \bU}^{r,*}}
&\le&  \nu\|\nabla (\bu_h^n-\bu_r^n)\|_0+\mu\|\nabla \cdot(\bu_h^n-\bu_r^n)\|_0\nonumber\\
&&+\sup_{\bbeta\in {\mathcal  \bU}^r}\frac{|b(\bu_h^n,\bu_h^n,\bbeta)-b(\bu_r^n,\bu_r^n,\bbeta)|}{\|\nabla \bbeta\|_0}.
\end{eqnarray*}
Concerning the last term on the right-hand side, we apply~\eqref{eq:nonli}. We also apply~\eqref{eq:nonli} to the second term on the right-hand side of \eqref{eq:erpre1}. Thus, we have
\begin{eqnarray*}
\|z_r^n\|_0&\le& \frac{1}{\beta_{r}}\Big[\alpha C_p C_r^{H^1} \left(C_3\|\bu_r^n-\bu_h^n\|_0+\nu\|\nabla (\bu_h^n-\bu_r^n)\|_0+\mu\|\nabla \cdot(\bu_h^n-\bu_r^n)\|_0\right)\nonumber\\
&&+C_3\|\bu_r^n-\bu_h^n\|_0+\mu\|\nabla \cdot(\bu_h^n-\bu_r^n)\|_0+\|\xi_h^n\|_0\Big].
\end{eqnarray*}
Squaring both sides and using the triangle inequality $\|p_r^j-p_h^j\|_0^2 \le 2\|z_h\|^2+ 2\|\xi^j\|_0^2$,
multiplying by~$\Delta t$ and summing over all time steps yields
\begin{eqnarray*}
\lefteqn{\sum_{j=1}^n\Delta t\|p_r^j-p_h^j\|_0^2}\nonumber \\
&\le&C\left(1+\frac{C_pC_r^{H^1}}{\beta_r}\right)^2 \left[C_3^2\sum_{j=1}^n\Delta t\|\bu_r^j-\bu_h^j\|_0^2
+\nu\sum_{j=1}^n\Delta t\nu\|\nabla(\bu_r^j-\bu_h^j)\|_0^2\right.\nonumber\\
&&\left.{}+\mu\sum_{j=1}^n\Delta t\mu\|\nabla\cdot(\bu_r^j-\bu_h^j)\|_0^2\right]+ \frac{C}{\beta_r^2}
\sum_{j=1}^n \Delta t \|P_r^{\mathrm{p}} p_h^j-p_h^j\|_0^2.
\end{eqnarray*}
Now the proof is finished by applying the triangle inequality $\|p_j- p_r^j\|_0^2 \le 2\|p_j- p_h^j\|_0^2 +2\|p_h^j- p_r^j\|_0^2$ and utilizing estimates \eqref{eq:cota_pod_0},
\eqref{eq:cota_pre}, \eqref{eq:pre_cota_finalSUPv}, \eqref{eq:pre_error3_b_L2} and~\eqref{eq:cota_pod_0_pre}.
In order to apply \eqref{eq:cota_pod_0} for  $X=L^2(\Omega)^d$ one can use inequality
\eqref{eq:inv_S} and for $X=H_0^1(\Omega)^{d\times d}$ one has to utilize Poincar\'e's inequality.
\end{proof}

As already noted above, the constant $ C_r^{H^1}$ is usually smaller in the case $X=H_0^1(\Omega)^{d\times d}$.

Comparing \eqref{eq:cota_pre_supr} with the respective bound in \cite{novo_rubino}, one notes 
that the power of $h$ in  \eqref{eq:cota_pre_supr} is $2l-1$ instead of $2l$ as in 
\cite[(89)]{novo_rubino}. The reason is that for obtaining  \eqref{eq:cota_pre_supr} the result 
from \cite{cor} was used, which corrects an older statement that was still applied in the numerical analysis
of \cite{novo_rubino}.


\subsection{Stabilization-Motivated Pressure ROM}

In this section we consider a different procedure for computing a POD-ROM pressure, which was proposed in \cite{John_et_al_vp}.
The starting point of this method consists in formally considering a residual-based stabilization of a
coupled velocity-pressure POD-ROM system. Since the functions from the velocity POD-ROM space are
discretely divergence-free, the system decouples and one obtains a Poisson-type equation for a
POD-ROM pressure whose right-hand side contains terms with the velocity POD-ROM approximation.
This method was recently analyzed in \cite{samu_et_al_pres}. However, the error analysis in
\cite{samu_et_al_pres} has some drawbacks that will be overcome in the subsequent analysis.

The stabilization-motivated approach results in the following problem for a POD-ROM pressure:
Find $p_r^n\in {\mathcal  W}^r$  such that for all $\psi\in {\mathcal  W}^r$
\begin{eqnarray}\label{pre_moti}
\lefteqn{
\sum_{K\in \mathcal T_h}\tau_K\left(\nabla p_r^n,\nabla \psi\right)_K}\nonumber\\
&=&-\sum_{K\in \mathcal T_h}
\tau_K\left(\frac{\bu_r^n-\bu_r^{n-1}}{\Delta t}+(\bu_r^n\cdot \nabla) \bu_r^n-\nu\Delta \bu_r^n-\bff^{n},\nabla\psi\right)_K,
\end{eqnarray}
where $\tau_K$ are positive parameters of order $h_K^2$:
\begin{equation}\label{tau_K}
c_1 h_K^2\le \tau_K\le c_2 h_K^2,\quad K\in \mathcal T_h.
\end{equation}
We notice that, as it is usually the case in the literature, the nonlinear terms do not involve $\frac{1}{2}(\nabla\cdot \bu_r^n \bu_r^n)$. Also we observe that although the POD velocities $\bu_r^j$, $j=0,\ldots N$, are discretely divergence-free, the term $(\bu_r^n-\bu_r^{n-1})/\Delta t$ cannot be omitted in~\eqref{pre_moti} unless all the $\tau_K$ are equal.

It was already mentioned in \cite{John_et_al_vp} that on quasi-uniform triangulations the choice of $\tau_K$ in terms 
of $h_K$ is of minor importance, since these parameters appear on both sides of \eqref{pre_moti}. The following 
analysis can be performed with a generic parameters. However, the concrete norm on the left-hand sides of the error estimates 
in Theorem~\ref{thm:conv_smrom} and the order of convergence on the right-hand sides depend on the concrete choice given in \eqref{tau_K}. We also performed numerical simulations with the choice
$\tau_K = C h_K$, as it was proposed in \cite{John_et_al_vp}. The results are very similar to 
those presented in Section~\ref{sec:numres} and they will not be presented here for the sake 
of brevity.

To facilitate the notations in the forthcoming, quite technical, numerical analysis, the following
quantities are defined: 
\begin{align}
\label{hatC_0}
\hat C_0 &= \left(\max\left\{\nu,\mu\right\} + T(C_3^2+C_4^2)\right)C_0,\\
\hat C_1 &= \left(\max\left\{\nu,\mu\right\} + T(C_3^2+C_4^2)\right)C_1 \nonumber\\
&\hphantom{{}=}{}+ T\left((\nu^2+\mu^2 + C_p^2 C_3^2+ C_4^2h^2)\rho^2  +C_p^2\left({h}/{\tau}\right)^2\right),\qquad\\
\label{hatC_2}
\hat C_2 &= \left(\max\left\{\nu,\mu\right\} + T(C_3^2+C_4^2)\right)C_2\nonumber\\
&\hphantom{{}=}{} +  T^2\left(\nu^2+\mu^2 + C_p^2C_3^2
+ C_4^2h^2\right) + C_p^2h^2,
\\
\label{hatC11}
\hat C_{1,1} &= \left(\max\left\{\nu,\mu\right\} + T(C_3^2+C_4^2)\right)C_{1,1}
+ TC_3^2\rho^2 + T(h/\tau)^2,\\
\hat C_{1,2} &=  \left(\max\left\{\nu,\mu\right\} + T(C_3^2+C_4^2)\right)C_{1,2} + T (\nu^2+\mu^2+C_4^2h^2)\rho^2, \\
\hat C_{2,1} &= \left(\max\left\{\nu,\mu\right\} + T(C_3^2+C_4^2)\right)C_{2,1}+ T^2\left(C_3^2+C_4^2\right)+ h^2,\\
\hat C_{2,2} &= \left(\max\left\{\nu,\mu\right\} + T(C_3^2+C_4^2)\right)C_{2,2} + T^2 (\nu^2+\mu^2+C_4^2h^2),
\label{hatC22}\\
\hat C_{3,1} &= T^2 \left(\nu^2+\mu^2+ C_4^2h^2\right),
\label{hatC31}
\end{align}
where $C_0,\ldots,C_4$, $C_{i,j}$, $i,j\in\{1,2\}$, are defined in \eqref{eq:pre_cota_finalSUPv}, 
\eqref{eq:pre_error3_b_L2}, \eqref{eq:nonli}, and \eqref{eq:nonli2}, 
respectively. Each of the quantities defined in \eqref{hatC_0}--\eqref{hatC31} is dimensionally
correct, i.e., all terms that are added have the same physical unit. This statement can be checked 
by straightforward but lengthy calculations, whose presentation is omitted for the sake of brevity.

Before proving the main result of the section we prove an auxiliary lemma.
\begin{lema}\label{le:long_proof} For $\be_r^j=\bu_r^j-P_r^{\mathrm{v}}\bu_h^j\in {\mathcal  \bU}^r$, $j=1,\ldots,M$, and
$X= H_0^1(\Omega)^{d\times d}$, the following bound holds
\begin{equation}\label{pre_defi_3}
\sum_{j=1}^n\Delta t \left\|\frac{\be_r^j -\be_r^{j-1}}{\Delta t}\right\|_0^2
\le \frac{C}{h^2}\biggl(
\hat C_0 \left\|\be_r^0\right\|_0^2 + \hat C_1\sum_{k=r+1}^{d_v} \lambda_k + \hat C_2(\Delta t)^2 \int_0^T\left\|\nabla\partial_{tt}\bu_h\right\|_0^2\ ds\biggr),
\end{equation}
where $\hat C_0$, $\hat C_1$ and~$\hat C_2$ are given by \eqref{hatC_0}--\eqref{hatC_2}. 
For~$X=L^2(\Omega)^d$ the following bound holds
\begin{eqnarray}\label{pre_defi_3bis}
&&\sum_{j=1}^n\Delta t \left\|\frac{\be_r^j -\be_r^{j-1}}{\Delta t}\right\|_0^2
\le \frac{C}{h^2}\left(
\hat C_0 \left\|\be_r^0\right\|_0^2 + (\hat C_{1,1}+\hat C_{1,2}\left\| S^{\mathrm{v}}\right\|_2)\sum_{k=r+1}^{d_v} \lambda_k \right.\\
& & \left. +  (\hat C_{2,1}+\hat C_{2,2}\left\| S^{\mathrm{v}}\right\|_2)(\Delta t)^2 \int_0^T\left\| \partial_{tt}\bu_h\right\|_0^2\ ds+\hat C_{3,1} (\Delta t)^2 \int_0^T\left\| \nabla\partial_{tt}\bu_h\right\|_0^2\ ds\right),\nonumber
\end{eqnarray}
where the constants $\hat C_{i,j}$ 
are given in \eqref{hatC11}--\eqref{hatC31}.
\end{lema}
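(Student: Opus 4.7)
The plan is to subtract the FOM equation \eqref{eq:gal_grad_div2} at time $t_j$ from the POD-ROM equation \eqref{eq:pod_method2} and test the resulting error equation with the test function $\bvar = (\be_r^j - \be_r^{j-1})/\Delta t \in \mathcal{\bU}^r$. Using the splitting
\begin{equation*}
\frac{\bu_r^j - \bu_r^{j-1}}{\Delta t} - \partial_t \bu_h^j
= \frac{\be_r^j - \be_r^{j-1}}{\Delta t}
+ P_r^{\mathrm{v}}\!\left(\frac{\bu_h^j - \bu_h^{j-1}}{\Delta t} - \partial_t \bu_h^j\right)
- (I - P_r^{\mathrm{v}})\partial_t \bu_h^j,
\end{equation*}
this choice of test function produces $\|(\be_r^j - \be_r^{j-1})/\Delta t\|_0^2$ on the left-hand side. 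The remaining right-hand side consists of the $L^2$-inner products of the temporal truncation $P_r^{\mathrm{v}}((\bu_h^j-\bu_h^{j-1})/\Delta t - \partial_t \bu_h^j)$ (handled via Taylor's theorem in integral form to yield $\partial_{tt}\bu_h$-contributions), of the POD projection error $(I - P_r^{\mathrm{v}})\partial_t \bu_h^j$ (controlled summed over $j$ by \eqref{eq:cota_pod_deriv}), together with the viscous, convective and grad-div terms $\nu(\nabla(\bu_r^j - \bu_h^j), \nabla \bvar)$, $b(\bu_r^j, \bu_r^j, \bvar) - b(\bu_h^j, \bu_h^j, \bvar)$ and $\mu(\nabla\cdot(\bu_r^j - \bu_h^j), \nabla\cdot\bvar)$.

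For these last three terms I would split $\bu_r^j - \bu_h^j = \be_r^j + (P_r^{\mathrm{v}}\bu_h^j - \bu_h^j)$ and then use Lemma~\ref{le:nonli}, employing estimate~\eqref{eq:nonli} together with the inverse inequality $\|\nabla \bvar\|_0 \le C h^{-1}\|\bvar\|_0$ (valid on $\bV_h^l$) when convenient, and estimate~\eqref{eq:nonli2} to bound the $L^2$-norm of convective differences in terms of $\|\nabla(\bu^n - \bu_r^n)\|_0$ without any gradient on $\bvar$. This inverse-inequality step is precisely the source of the $C/h^2$ prefactor in \eqref{pre_defi_3}--\eqref{pre_defi_3bis}. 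After Cauchy--Schwarz and Young's inequality absorb one copy of $\|\bvar\|_0^2$ into the left-hand side, multiplying by $\Delta t$ and summing over $j = 1,\ldots,n$ produces terms of the form $\sum \Delta t \|\be_r^j\|_0^2$, $\nu \sum \Delta t \|\nabla \be_r^j\|_0^2$, $\mu \sum \Delta t \|\nabla\cdot\be_r^j\|_0^2$, $\sum \Delta t \|(I - P_r^{\mathrm{v}})\partial_t \bu_h^j\|_0^2$ and $(\Delta t)^2\int_0^T\|\partial_{tt}\bu_h\|_Y^2\, ds$ for a norm $Y$ depending on the choice of $X$. The first three are controlled by \eqref{eq:pre_cota_finalSUPv} when $X = H_0^1(\Omega)^{d\times d}$ and by \eqref{eq:pre_error3_b_L2} when $X = L^2(\Omega)^d$, the fourth by \eqref{eq:cota_pod_deriv}, and the fifth is already in the target form.

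The principal technical obstacle is the bookkeeping required to identify exactly the coefficients $\hat C_0, \hat C_1, \hat C_2$ in \eqref{pre_defi_3} and $\hat C_0, \hat C_{i,j}, \hat C_{3,1}$ in \eqref{pre_defi_3bis}. In the $X = H_0^1$ case, the $L^2$-norm of any $P_r^{\mathrm{v}}$-image is controlled by Poincar\'e combined with the $H^1$-stability of the projection, so only a $\|\nabla \partial_{tt}\bu_h\|_0^2$ integral appears in \eqref{pre_defi_3}. In the $X = L^2$ case, the $L^2$-stability of $P_r^{\mathrm{v}}$ provides the cleaner $\|\partial_{tt}\bu_h\|_0^2$ contribution, but the gradient of the projection error $\nabla(P_r^{\mathrm{v}}\bu_h^j - \bu_h^j)$ has to be estimated via \eqref{eq:inv_S}, which both produces the $\|S^{\mathrm{v}}\|_2$ factors in $\hat C_{1,2}$ and $\hat C_{2,2}$ and generates an additional contribution measured in $\|\nabla\partial_{tt}\bu_h\|_0^2$ that is responsible for the extra constant $\hat C_{3,1}$ in \eqref{pre_defi_3bis}.
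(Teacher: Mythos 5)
Your proposal is correct and follows essentially the same route as the paper: the paper simply imports the FOM--ROM error equation \eqref{eq:error_need} (resp.\ \eqref{eq:error_need_l2}) and the temporal-truncation bound from the earlier work instead of re-deriving them, then tests with $(\be_r^j-\be_r^{j-1})/\Delta t$, uses the inverse inequality to produce the $C/h^2$ factor, applies \eqref{eq:nonli} to the convective difference, and concludes with \eqref{eq:pre_cota_finalSUPv}, \eqref{eq:pre_error3_b_L2}, \eqref{eq:cota_pod_0}, \eqref{eq:cota_pod_deriv} and \eqref{eq:inv_S}, exactly as you outline. The only small remark is that \eqref{eq:nonli2} is not needed (and not quite applicable) here, since the error equation involves the skew-symmetric form $b$, so \eqref{eq:nonli} combined with the inverse inequality is the estimate actually used.
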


\begin{proof}
Let $X= H_0^1(\Omega)^{d\times d}$. The following relation was derived in
\cite[(4.5)]{wir_NS}
\begin{eqnarray}\label{eq:error_need}
\lefteqn{\left(\frac{\be_r^j -\be_r^{j-1}}{\Delta t},\bvar\right)+\nu(\nabla  \be_r^j ,\nabla\bvar)+\mu(\nabla \cdot \be_r^j,\nabla \cdot\bvar)}
\nonumber\\
&=&\left(\partial_t\bu_h^j-\frac{P_r^{\mathrm{v}} \bu^{j}_h-P_r^{\mathrm{v}} \bu^{j-1}_h}{\Delta t},\bvar\right)
 -\mu(\nabla \cdot\bbeta_h^j,\nabla \cdot \bvar)\nonumber\\
 &&+b(\bu^{j}_h,\bu^{j}_h,\bvar)-b(\bu_r^j,\bu_r^j,\bvar),\quad \forall\ \bvar\in {\mathcal  \bU}^r,
\end{eqnarray}
where $\bbeta_h^j=P_r^{\mathrm{v}} \bu_h^j-\bu_h^j\in \mathcal \bU$.

Choosing $\bvar=(\be_r^j-\be_r^{j-1})/\Delta t$ and using \eqref{eq:nonli} and the inverse inequality \eqref{inv} gives
\begin{eqnarray*}
\left\|\frac{\be_r^j -\be_r^{j-1}}{\Delta t}\right\|_0 &\le& h^{-1}c_{\mathrm{inv}}
\left(\nu\|\nabla \be_r^j\|_0+ \mu \|\nabla\cdot \be_r^j\|_0 +\mu\|\nabla\cdot \bbeta_h^j\|_0\right)
\nonumber\\
&&+ C_3c_{\mathrm{inv}}h^{-1} \|\bu_h^j-\bu_r^j\|_0+ \left\|\partial_t\bu_h^j-\frac{P_r^{\mathrm{v}} \bu^{j}_h-P_r^{\mathrm{v}} \bu^{j-1}_h}{\Delta t}\right\|_0.
\end{eqnarray*}
Taking the square and the sum over the time instants and using \eqref{diver_vol} leads to
\begin{eqnarray}\label{aux2}
\lefteqn{\hspace*{-3em}\sum_{j=1}^n\Delta t \left\|\frac{\be_r^j -\be_r^{j-1}}{\Delta t}\right\|_0^2
\le C \frac{\nu^2}{h^2}\sum_{j=1}^n\Delta t \|\nabla \be_r^j\|_0^2
+C  \frac{\mu^2}{h^2}\sum_{j=1}^n\Delta t \|\nabla\cdot\be_r^j\|_0^2} \nonumber\\
&& +C \frac{\mu^2}{h^2} \sum_{j=1}^n\Delta t \|\nabla(P_r^{\mathrm{v}}\bu_h^j-\bu_h^j)\|_0^2
+ C\frac{C_3^2}{h^2}\sum_{j=1}^n\Delta t \|\bu_h^j-\bu_r^j\|_0^2 \\
&& +C
\sum_{j=1}^n\Delta t \left\|\partial_t\bu_h^j-\frac{P_r^{\mathrm{v}} \bu_h^j-P_r^{\mathrm{v}} \bu_h^{j-1}}{\Delta t}\right\|_0^2:=X_1+X_2+X_3+X_4+X_5.\nonumber
\end{eqnarray}
We notice that the first two sums on the right-hand side of~\eqref{aux2} appear on the left-hand side of~\eqref{eq:pre_cota_finalSUPv}, so that the first two terms on the right-hand side of~\eqref{aux2} can be bounded by the right-hand side of~\eqref{eq:pre_cota_finalSUPv} times~$\max\{\nu,\mu\}/h^2$, hence
\begin{eqnarray}\label{eq:x1}
\lefteqn{X_1+X_2}\\
& \le& C\frac{\max\{\nu,\mu\}}{h^2}\left(C_0\|\be_r^0\|_0^2+
C_1\sum_{k=r+1}^{d_v} \lambda_k
+C_2(\Delta t)^2\int_0^T\|\nabla(\partial_{tt}\bu_h)\|_0^2 \ ds \right).\nonumber
\end{eqnarray}
We observe that $\max\{\nu,\mu\}C_i$, $i=0,1,2$, is part of of the values $\hat C_i$, $i=0,1,2$, respectively, so that
$X_1+X_2$ is bounded by the right-hand side of~\eqref{pre_defi_3}.

For the third term on the right-hand side of~\eqref{aux2}, we apply \eqref{eq:cota_pod_0}, so that it can be bounded by $C(\mu^2T/h^2)C_{H_0^1}^2$.  To bound the fourth term we add and subtract $P_r^{\mathrm{v}} \bu_h^j$
and apply Poincar\'e's inequality to obtain
\[
 \sum_{j=1}^n\Delta t \|\bu_h^j-\bu_r^j\|_0^2 \le  2C_p^2\sum_{j=1}^n\Delta t \|\nabla(\bu_h^j-P_r^{\mathrm{v}} \bu_h^j)\|_0^2 + 2 T \max_{1\le j\le n} \|\be_r^j\|_0^2.
\]
The first term on the right-hand side is then estimated by \eqref{eq:cota_pod_0}. The second term is bounded in \eqref{eq:pre_cota_finalSUPv}.
Combining the previous estimates yields
\begin{eqnarray}\label{eq:x3}
\lefteqn{X_3+X_4}\nonumber\\
&\le& C\frac{(\mu^2+C_p^2C_3^2)T}{h^2}\left(
\rho^2
\sum_{k={r+1}}^{d_v}\lambda_k
+ T(\Delta t)^2
\int_0^T\|\nabla\partial_{tt}\bu_{h}\|^2 ds\right)
\\
&& + C\frac{TC_3^2}{h^2}\left(C_0\|\be_r^0\|_0^2+
C_1\sum_{k=r+1}^{d_v} \lambda_k
+C_2(\Delta t)^2\int_0^T\|\nabla\partial_{tt}\bu_h\|_0^2 \ ds \right).\nonumber
\end{eqnarray}
We observe that $TC_3^2C_i$, $i=0,1,2$ is part of the values~$\hat C_i$, $i=0,1,2$, respectively; similarly,
$(\mu^2 + C_p^2C_3^2)T^i\rho^{2(2-i)}$, $i=1,2$, is part of the constants~$\hat C_i$, $i=1,2$. Thus, $X_3+X_4$ can be bounded
by the right-hand side of~\eqref{pre_defi_3}.

Finally,
in the case $X=H_0^1(\Omega)^{d\times d}$, the following bound for
the last term in \eqref{aux2} is given in  \cite[(4.17)-(4.18)]{wir_NS},
\begin{eqnarray*}
\sum_{j=1}^n\Delta t \left\|\partial_t\bu_h^j-\frac{P_r^{\mathrm{v}} \bu_h^j-P_r^{\mathrm{v}} \bu_h^{j-1}}{\Delta t}\right\|_0^2
&\le& \frac{2 TC_p^2}{\tau^2}\sum_{k=r+1}^{d_v}\lambda_k
\nonumber\\
&&
+ C C_p^2 (\Delta t)^2\int_0^T\|\nabla \partial_{tt}\bu_h\|_0^2 \ ds,
\end{eqnarray*}
where we notice that $TC_p^2(h/\tau)^2$ is part of~$\hat C_1$ and~$C_p^2h^2$ is part of~$\hat C_2$.
Consequently, it is
\begin{eqnarray}\label{pre_defi_2}
\lefteqn{
\sum_{j=1}^n\Delta t \left\|\partial_t\bu_h^j-\frac{P_r^{\mathrm{v}} \bu_h^j-P_r^{\mathrm{v}} \bu_h^{j-1}}{\Delta t}\right\|_0^2}
\nonumber\\
&\le&
 \frac{C}{h^2}\left(\hat C_1 \sum_{k=r+1}^{d_v}\lambda_k + \hat C_2(\Delta t)^2\int_0^T\|\nabla \partial_{tt}\bu_h\|_0^2 \ ds\right),
\end{eqnarray}
so that
$X_5$ is also bounded by the right-hand side of~\eqref{pre_defi_3}.
Thus, inserting \eqref{eq:x1}, \eqref{eq:x3} and \eqref{pre_defi_2} into \eqref{aux2}, we obtain~\eqref{pre_defi_3}.
\medskip

If $X=L^2(\Omega)^d$ one starts analogously to the other case, but due to the different inner product, instead of \eqref{eq:error_need}
now one gets
\begin{eqnarray}\label{eq:error_need_l2}
\lefteqn{\left(\frac{\be_r^j -\be_r^{j-1}}{\Delta t},\bvar\right)+\nu(\nabla  \be_r^j ,\nabla\bvar)+\mu(\nabla \cdot \be_r^j,\nabla \cdot\bvar)}
\nonumber\\
&=&\left(\partial_t\bu_h^j-\frac{\bu^{j}_h- \bu^{j-1}_h}{\Delta t},\bvar\right)
 -\nu(\nabla \bbeta_h^j,\nabla \bvar)-\mu(\nabla \cdot\bbeta_h^j,\nabla \cdot \bvar)\nonumber\\
 &&+b(\bu^{j}_h,\bu^{j}_h,\bvar)-b(\bu_r^j,\bu_r^j,\bvar),\quad \forall\ \bvar\in {\mathcal  \bU}^r,
\end{eqnarray}
compare \cite[(4.23)]{wir_NS}. Notice that~\eqref{eq:error_need_l2} is as~\eqref{eq:error_need} but with  the extra term $-\nu(\nabla\bbeta_h^j,\nabla\bvar)$ on the right-hand side.
Arguing as before, we obtain
\begin{eqnarray}\label{aux3}
\lefteqn{\sum_{j=1}^n\Delta t \left\|\frac{\be_r^j -\be_r^{j-1}}{\Delta t}\right\|_0^2
\le C \frac{\nu^2}{h^2}\sum_{j=1}^n\Delta t \|\nabla \be_r^j\|_0^2
+C \frac{\mu^2}{h^2}\sum_{j=1}^n\Delta t \|\nabla\cdot\be_r^j\|_0^2} \nonumber\\
&& +C\frac{\nu^2+\mu^2}{h^2} \sum_{j=1}^n\Delta t \|\nabla(P_r^{\mathrm{v}}\bu_h^j-\bu_h^j)\|_0^2
+ C\frac{C_3^2}{h^2} \sum_{j=1}^n\Delta t \|\bu_h^j-\bu_r^j\|_0^2 \nonumber\\
&& +C
\sum_{j=1}^n\Delta t \left\|\partial_t\bu_h^j-\frac{\bu^{j}_h- \bu^{j-1}_h}{\Delta t}\right\|_0^2:=X_1+X_2+X_3+X_4+X_5,
\end{eqnarray}
which is as~\eqref{aux2} but with~$\mu^2$ replaced by~$(\nu^2+\mu^2)$ in the third term on the right-hand side. Thus, we argue similarly as we did with~\eqref{aux2}.
Now, we use~\eqref{eq:pre_error3_b_L2} instead of~\eqref{eq:pre_cota_finalSUPv} to bound the first two terms on the right-hand side of~\eqref{aux3}
\begin{eqnarray*}
X_1+X_2
&\le& C\frac{\max\{\nu,\mu\}}{h^2}\left( C_0\|\be_r^j\|_0^2+(C_{1,1} +C_{1,2}\left\|S^{\mathrm{v}}\right\|_2)\sum_{k=r+1}^{d_v} \lambda_k
\right.\nonumber\\
&&\left.
+(C_{2,1} + C_{2,2}\left\|S^{\mathrm{v}}\right\|_2)
(\Delta t)^2\int_0^T\|
\partial_{tt}\bu_h\|_0^2\ ds\right).
\end{eqnarray*}
We have already commented that $\max\{\nu,\mu\}C_0$ is part of $\hat C_0$. So it is the case
for $\max\{\nu,\mu\}C_{i,j}$, $i,j=1,2$, with respect to~$\hat C_{i,j}$, $i,j=1,2$, respectively, so that
we have
\begin{eqnarray}\label{eq:x1l2}
X_1+X_2
&\le & \frac{C}{h^2}\left( \hat C_0\|\be_r^j\|_0^2+(\hat  C_{1,1} +\hat C_{1,2}\left\|S^{\mathrm{v}}\right\|_2)\sum_{k=r+1}^{d_v} \lambda_k
\right.\nonumber\\
&&\left.
+(\hat C_{2,1} + \hat C_{2,2}\left\|S^{\mathrm{v}}\right\|_2)
(\Delta t)^2\int_0^T\|
\partial_{tt}\bu_h\|_0^2\ ds\right).
\end{eqnarray}
Bounding the third term on the right-hand side of~\eqref{aux3} is achieved by applying
\eqref{eq:bound_2nd_term_pre} with $Y=H_0^1$ and then using \eqref{eq:inv_S} and  \eqref{eq:cota_pod_deriv}
to estimate the first two terms in \eqref{eq:bound_2nd_term_pre}.  To bound the third term in \eqref{eq:bound_2nd_term_pre}, we use the triangle inequality, apply again \eqref{eq:inv_S} and 
also the stability in $L^2$ of the $P_r^{\mathrm{v}}$ projection 
\begin{eqnarray*}
 \|\nabla(\partial_{tt}\bu_{h}-P_r^{\mathrm{v}}\partial_{tt}\bu_h)\|_0^2&\le& 2\|\nabla P_r^{\mathrm{v}}\partial_{tt}\bu_h\|_0^2
 +2 \|\nabla\partial_{tt}\bu_{h}\|_0^2\\
 &\le& 2\|S^{\mathrm{v}}\|_2\|\partial_{tt}\bu_h\|_0^2 + 2 \|\nabla\partial_{tt}\bu_{h}\|_0^2.
 \end{eqnarray*}
Collecting the estimates leads to 
\begin{eqnarray}\label{eq:need0}
X_3&\le& C \frac{\nu^2+\mu^2}{h^2}  T\left(\rho^2\|S^{\mathrm{v}}\|_2\sum_{k={r+1}}^{d_v}\lambda_k\right.\nonumber\\
&& \left. + T (\Delta t)^2 \left(\|S^{\mathrm{v}}\|_2 \int_0^T\|\partial_{tt}\bu_{h}\|_0^2\ ds
+ \int_0^T\|\nabla\partial_{tt}\bu_{h}\|_0^2\ ds\right)\right).
\end{eqnarray}
We notice that $(\nu^2+\mu^2)T^i \rho^{2(2-i)}$ is part of $\hat C_{i,2}$ and $(\nu^2+\mu^2)T$ is 
part of $\hat C_{3,1}$. Then the right-hand side above is included
on the right-hand side of \eqref{pre_defi_3bis}.

After having
applied the triangle inequality, the fourth term is estimated in the following way
\[
\sum_{j=1}^n\Delta t \|\bu_h^j-\bu_r^j\|_0^2 \le  2 \sum_{j=1}^n\Delta t \|\bu_h^j-P_r^{\mathrm{v}} \bu_h^j\|_0^2 + 2 T  \max_{1\le j\le n} \|\be_r^j\|_0^2,
\]
and then we use \eqref{eq:cota_pod_0} and   \eqref{eq:pre_error3_b_L2} to obtain
\begin{eqnarray*}
\lefteqn{X_4 \le C\frac{TC_3^2}{h^2}\left(\rho^2\sum_{k={r+1}}^{d_v}\lambda_k
+  T(\Delta t)^2 \int_0^T\|\partial_{tt}\bu_{h}\|_0^2\ ds+C_0\|\be_r^j\|_0^2
\right.}\nonumber\\
&& +(C_{1,1} +C_{1,2}\left\|S^{\mathrm{v}}\right\|_2)\sum_{k=r+1}^{d_v} \lambda_k
+(C_{2,1} + C_{2,2}\left\|S^{\mathrm{v}}\right\|_2)
(\Delta t)^2\int_0^T\|
\partial_{tt}\bu_h\|_0^2\ ds\biggr).\nonumber
\end{eqnarray*}
We remark that $C_3^2T^i\rho^{2(2-i)}$ is part of~$\hat C_{i,1}$, $C_3^2TC_0$ is part of~$\hat C_0$, and $C_3^2TC_{i,j}$,
$i,j=1,2$, is part of~$\hat C_{i,j}$, $i,j=1,2$, so that we have,
\begin{eqnarray}\label{eq:x3l2}
X_4
&\le & \frac{C}{h^2}\left( \hat C_0\|\be_r^j\|_0^2+(\hat  C_{1,1} +\hat C_{1,2}\left\|S^{\mathrm{v}}\right\|_2)\sum_{k=r+1}^{d_v} \lambda_k
\right.\nonumber\\
&&\left.
{}+(\hat C_{2,1} + \hat C_{2,2}\left\|S^{\mathrm{v}}\right\|_2)
(\Delta t)^2\int_0^T\|
\partial_{tt}\bu_h\|_0^2\ ds\right).
\end{eqnarray}
Finally, the last term on the right-hand side of~\eqref{aux3} has the following simple bound:
\begin{equation}\label{eq:need_last}
X_5\le C  (\Delta t)^2\int_0^T \|\partial_{tt}\bu_h\|_0^2\ ds.
\end{equation}
Altogether, for $X=L^2(\Omega)^d$, from \eqref{eq:x1l2}, \eqref{eq:need0}, \eqref{eq:x3l2} and \eqref{eq:need_last} we derived the bound \eqref{pre_defi_3bis},
and the proof is finished.
\end{proof}

\begin{Theorem} \label{thm:conv_smrom}
Assume that $\nu\le h\| \bu\|_{L^\infty(L^\infty)}$. The following bound holds when $X=H_0^1(\Omega)^{d\times d}$
\begin{eqnarray}\label{cota_th_pre}
\lefteqn{\sum_{j=1}^n\Delta t \sum_{K\in \mathcal T_h}\tau_K\|\nabla(p^j-p_r^j)\|_{0,K}^2}\nonumber\\ 
&\le& C\Bigg[ C_A   h^{2l} +   C_4^2 T C^2(\bu,p,l+1) h^{2l}+  C_1^2(\bu,p,l+1) h^{2l}  \nonumber \\
&& + C_{\rm press}^2(\bu,p,l+1) h^{2l-1}  + \hat C_0 \|\be_r^0\|_0^2 + \hat C_1\sum_{k=r+1}^{d_v} \lambda_k \\
&&  + 
h^{2l} \sum_{j=1}^n\Delta t \left\| p^j\right\|_{l}^2 + h^2 \|S^{\mathrm{p}}\|_2 T \sum_{k=r+1}^{d_p}\gamma_k
+ \hat C_2(\Delta t)^2 \int_0^T\left\| \nabla \partial_{tt}\bu_h\right\|_0^2\ ds
\Bigg],\nonumber
\end{eqnarray}
where $\hat C_0$, $\hat C_1$ and~$\hat C_2$ are defined in \eqref{hatC_0}--\eqref{hatC_2} and
\begin{equation}\label{laC1u}
C_1^2(\bu,p,l+1) =\nu^2 T\|\bu \|^2_{L^\infty(H^{l+1})}+\|\bu\|^2_{L^\infty(L^\infty)}TC^2(\bu,p,l+1)
\end{equation}
with  $C(\bu,p,l+1)$ being the constant in \eqref{eq:cota_grad_div}.
In the case that $X=L^2(\Omega)^d$ is used, the following error estimate is valid
\begin{eqnarray}\label{cota_th_pre_l2}
\lefteqn{\sum_{j=1}^n\Delta t \sum_{K\in \mathcal T_h}\tau_K\|\nabla(p^j-p_r^j)\|_{0,K}^2}\nonumber\\ 
&\le& C\Bigg[ C_A   h^{2l} + C_4^2 T C^2(\bu,p,l+1) h^{2l} + C_1^2(\bu,p,l+1) h^{2l}\nonumber\\&&  +  C_{\rm press}^2(\bu,p,l+1) h^{2l-1} +
\hat C_0 \|\be_r^0\|_0^2 + (\hat C_{1,1}+\hat C_{1,2}\left\| S^{\mathrm{v}}\right\|)\sum_{k=r+1}^{d_v} \lambda_k \nonumber \\
&&  + h^{2l} \sum_{j=1}^n\Delta t \left\| p^j\right\|_{l}^2 + h^2 \|S^{\mathrm{p}}\|_2 T \sum_{k=r+1}^{d_p}\gamma_k \\
&& +  (\hat C_{2,1}+\hat C_{2,2}\left\| S^{\mathrm{v}}\right\|)(\Delta t)^2 \int_0^T\left\| \partial_{tt}\bu_h\right\|_0^2\ ds
+\hat C_{3,1}(\Delta t)^2 \int_0^T\left\| \nabla\partial_{tt}\bu_h\right\|_0^2\ ds
\Bigg], \nonumber
\end{eqnarray}
with  $\hat C_{i,j}$ 
given in \eqref{hatC11}--\eqref{hatC31}.
\end{Theorem}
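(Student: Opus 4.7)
The plan is to derive an error equation by comparing \eqref{pre_moti} with the continuous momentum equation $\nabla p^n = -\partial_t\bu^n - (\bu^n\cdot\nabla)\bu^n + \nu\Delta\bu^n + \bff^n$ tested element-wise against $\tau_K\nabla\psi|_K$ and summed over $K$. Subtraction gives, for every $\psi\in\mathcal W^r$,
\[
\sum_K\tau_K\bigl(\nabla(p^n-p_r^n),\nabla\psi\bigr)_K = -\sum_K\tau_K\bigl(R^n,\nabla\psi\bigr)_K,
\]
with velocity residual $R^n = \bigl(\partial_t\bu^n - (\bu_r^n-\bu_r^{n-1})/\Delta t\bigr) + \bigl((\bu^n\cdot\nabla)\bu^n - (\bu_r^n\cdot\nabla)\bu_r^n\bigr) - \nu\Delta(\bu^n-\bu_r^n)$. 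Testing with $\xi^n = p_r^n - P_r^{\mathrm{p}} p_h^n\in\mathcal W^r$, writing $\nabla(p^n-p_r^n) = \nabla(p^n - P_r^{\mathrm{p}} p_h^n)-\nabla\xi^n$, applying Cauchy--Schwarz and Young to absorb $\|\nabla\xi^n\|^2$, and then using a triangle inequality, the target quantity at time $t_n$ is reduced to
\[
C\sum_K\tau_K\|\nabla(p^n-P_r^{\mathrm{p}} p_h^n)\|_{0,K}^2 + C\sum_K\tau_K\|R^n\|_{0,K}^2,
\]
which is then multiplied by $\Delta t$ and summed over the time steps.

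For the pressure-projection term I would insert the Lagrange interpolant and write $(p^n - I_h p^n)+(I_h p^n - p_h^n)+(p_h^n - P_r^{\mathrm{p}} p_h^n)$: the interpolation piece, handled via \eqref{interp}, contributes $h^{2l}\|p^n\|_l^2$; on the purely finite element piece an element-wise inverse inequality converts $\|\nabla\cdot\|_{0,K}$ into $h^{-1}\|\cdot\|_{0,K}$, cancelled by $\tau_K\sim h^2$, after which a triangle inequality and \eqref{eq:cota_pre} deliver $C_{\mathrm{press}}^2 h^{2l-1}$; the third piece uses \eqref{eq:inv_Sp} followed by \eqref{eq:cota_pod_0_pre} and yields $h^2\|S^{\mathrm{p}}\|_2 T\sum\gamma_k$. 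For the residual, since $\tau_K\sim h^2$, the non-viscous part contributes $Ch^2$ times sums of $\|D^n\|_0^2+\|N^n\|_0^2$. The time-derivative residual $D^n$ splits as $(\partial_t\bu^n-\partial_t\bu_h^n) + \bigl(\partial_t\bu_h^n - (P_r^{\mathrm{v}}\bu_h^n - P_r^{\mathrm{v}}\bu_h^{n-1})/\Delta t\bigr) - (\be_r^n-\be_r^{n-1})/\Delta t$, bounded successively by Lemma~\ref{le:bosco}, estimate~\eqref{pre_defi_2}, and Lemma~\ref{le:long_proof}. The convection residual uses \eqref{eq:nonli2} and the split $\bu^n-\bu_r^n=(\bu^n-\bu_h^n)+(\bu_h^n-P_r^{\mathrm{v}}\bu_h^n)-\be_r^n$: \eqref{eq:cota_grad_div} gives $C_4^2 T C^2(\bu,p,l+1)h^{2l}$; \eqref{eq:cota_pod_0} supplies the POD contributions whose prefactors $C_4^2h^2\rho^2$ are already built into $\hat C_1,\hat C_2$; and the inverse inequality $\|\nabla\be_r^n\|\le Ch^{-1}\|\be_r^n\|$ together with \eqref{eq:pre_cota_finalSUPv} (resp.\ \eqref{eq:pre_error3_b_L2} in the $L^2$ case) handles the $\be_r$ piece as $TC_4^2$ times the $\be_r$-bound, fitting into $\hat C_0,\hat C_1,\hat C_2$ (resp.\ $\hat C_{i,j}$).

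The viscous residual $\nu^2\sum_K\tau_K\|\Delta(\bu^n-\bu_r^n)\|_{0,K}^2$ is where the hypothesis $\nu\le h\|\bu\|_{L^\infty(L^\infty)}$ is crucial. Writing $\bu^n-\bu_r^n = (\bu^n-I_h\bu^n)+(I_h\bu^n-\bu_r^n)$, the interpolation piece with $\|\Delta(\bu-I_h\bu)\|_{0,K}\le Ch^{l-1}\|\bu\|_{l+1,K}$ produces the $\nu^2 T\|\bu\|_{L^\infty(H^{l+1})}^2 h^{2l}$ summand of $C_1^2(\bu,p,l+1)$ in~\eqref{laC1u}. On the finite element remainder I would apply $\|\Delta v_h\|_{0,K}\le Ch^{-1}\|\nabla v_h\|_{0,K}$ (so that $\tau_K h^{-2}\sim 1$) and split further into $(I_h\bu^n-\bu^n)+(\bu^n-\bu_h^n)+(\bu_h^n-P_r^{\mathrm{v}}\bu_h^n)-\be_r^n$; the $\nu^2$ prefactor on the FOM and interpolation pieces is converted via $\nu^2\le h^2\|\bu\|_{L^\infty(L^\infty)}^2$ and together with \eqref{eq:cota_grad_div} yields the $\|\bu\|_{L^\infty(L^\infty)}^2 TC^2(\bu,p,l+1)h^{2l}$ summand of $C_1^2(\bu,p,l+1)$, while the POD and $\be_r$ pieces are absorbed into the $\hat C_i$ (resp.\ $\hat C_{i,j}$) exactly as in the convection analysis. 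The $L^2$ case \eqref{cota_th_pre_l2} follows by the same route, using \eqref{eq:pre_error3_b_L2} and the bound \eqref{pre_defi_3bis} from Lemma~\ref{le:long_proof} in place of their $H_0^1$ analogues and retaining the $\|S^{\mathrm{v}}\|_2$ factors already packaged in \eqref{hatC11}--\eqref{hatC22}; the extra constant $\hat C_{3,1}$ in \eqref{hatC31} accommodates the additional $-\nu(\nabla\bbeta_h^j,\nabla\bvar)$ contribution present on the right-hand side of \eqref{eq:error_need_l2}. The main obstacle I expect is the careful bookkeeping in the viscous residual, namely deploying the assumption so that every $\nu^2$ factor multiplying a velocity-error term rearranges into one of the expressions already prepackaged in \eqref{hatC_0}--\eqref{hatC31} without producing any inverse power of $\nu$.
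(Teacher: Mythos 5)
Your proposal is correct and follows essentially the same route as the paper's proof: the same residual error equation tested with an element of $\mathcal W^r$, the same splitting into time-derivative, convective, viscous and pressure-projection residuals, the same key ingredients (Lemma~\ref{le:bosco}, \eqref{pre_defi_2}, Lemma~\ref{le:long_proof}, \eqref{eq:nonli2}, \eqref{eq:cota_grad_div}, \eqref{eq:cota_pod_0}, \eqref{eq:pre_cota_finalSUPv} resp.\ \eqref{eq:pre_error3_b_L2}, \eqref{interp}, \eqref{eq:inv_Sp}, \eqref{eq:cota_pod_0_pre}, \eqref{eq:cota_pre}), and the same use of the assumption $\nu\le h\|\bu\|_{L^\infty(L^\infty)}$ in the viscous residual. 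The only cosmetic deviations (splitting the pressure error around $P_r^{\mathrm{p}}p_h^n$ instead of $P_r^{\mathrm{p}}p^n$, and absorbing the $\nu^2\|\nabla\be_r^j\|_0^2$ piece via the inverse inequality plus the assumption rather than via the $\nu$-weighted term on the left-hand side of \eqref{eq:pre_cota_finalSUPv}) do not change the argument or the resulting constants up to generic factors.
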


\begin{proof}
For the solution of the Navier--Stokes equations at time $t_n$, it holds for all $\psi \in L_0^2(\Omega)$ that
\begin{eqnarray*}
\lefteqn{\sum_{K\in \mathcal T_h}\tau_K\left(\nabla P_r^{\mathrm{p}} p^n,\nabla \psi\right)_K
=  \sum_{K\in \mathcal T_h}\tau_K \left(\nabla P_r^{\mathrm{p}} p^n,\nabla \psi\right)_K}\nonumber\\
&& -\sum_{K\in \mathcal T_h}\tau_K\left(\partial_t\bu^n+(\bu^n\cdot \nabla) \bu^n-\nu\Delta \bu^n-\bff^{n}+\nabla p^n,\nabla\psi\right)_K.
\end{eqnarray*}
Let $z_r^n= p_r^n- P_r^{\mathrm{p}} p^n \in {\mathcal  W}^r$, then it follows with \eqref{pre_moti} that
for all $\psi \in {\mathcal  W}^r \subset L_0^2(\Omega)$ 
\begin{eqnarray*}
\lefteqn{\sum_{K\in \mathcal T_h}\tau_K(\nabla z_r^n,\nabla \psi)_K
= \sum_{K\in \mathcal T_h} \tau_K\left(\partial_t\bu^n-\frac{\bu_r^n-\bu_r^{n-1}}{\Delta t},\nabla\psi\right)_K
} \nonumber\\
&&+\sum_{K\in \mathcal T_h}
\tau_K\left((\bu^n\cdot\nabla) \bu^n-(\bu_r^n\cdot \nabla) \bu_r^n,\nabla\psi\right)_K\\
&&\quad+\sum_{K\in \mathcal T_h}
\tau_K\left(\nu \Delta \bu_r^n-\nu\Delta \bu^n,\nabla\psi\right)_K
+\sum_{K\in \mathcal T_h}
\tau_K\left(\nabla (p^n-P_r^{\mathrm{p}} p^n),\nabla\psi\right)_K.\nonumber
\end{eqnarray*}
Choosing $\psi=z_r^n$, applying the Cauchy--Schwarz inequality 
and  the bounds of the parameter in~\eqref{tau_K} gives
\begin{eqnarray*}
\lefteqn{
\sum_{K\in \mathcal T_h}\tau_K\|\nabla z_r^n\|_{0,K}^2}\\
&\le& C h^2\left\|\partial_t\bu^n-\frac{\bu_r^n-\bu_r^{n-1}}{\Delta t}\right\|_{0}^2
+Ch^2\left\|(\bu^n\cdot\nabla) \bu^n-(\bu_r^n\cdot \nabla) \bu_r^n\right\|_0^2\\
&&+Ch^2\sum_{K\in \mathcal T_h}\|\nu \Delta \bu_r^n-\nu\Delta \bu^n\|_{0,K}^2
+Ch^2\left\|\nabla (p^n-P_r^{\mathrm{p}} p^n)\right\|_{0}^2.
\end{eqnarray*}

We will consider the error in the following discrete $L^2$ norm in time
\[
\sum_{j=1}^n\Delta t \sum_{K\in \mathcal T_h}\tau_K\|\nabla z_r^j\|_{0,K}^2.
\]
Summing over all time instants leads to
\begin{eqnarray*}
\lefteqn{
\sum_{j=1}^n\Delta t \sum_{K\in \mathcal T_h}\tau_K\|\nabla z_r^j\|_{0,K}^2}\nonumber \\
&\le& C h^2 \Bigg[\sum_{j=1}^n\Delta t \left\|\partial_t\bu^j-\frac{\bu_r^j-\bu_r^{j-1}}{\Delta t}\right\|_{0}^2
+\sum_{j=1}^n\Delta t \left\|(\bu^j\cdot\nabla) \bu^j-(\bu_r^j\cdot \nabla) \bu_r^j\right\|_0^2\nonumber\\
&&+\sum_{j=1}^n\Delta t \sum_{K\in \mathcal T_h}\|\nu \Delta \bu_r^j-\nu\Delta \bu^j\|_{0,K}^2
+\sum_{j=1}^n\Delta t \left\|\nabla (p^j-P_r^{\mathrm{p}} p^j)\right\|_{0}^2\Bigg].
\end{eqnarray*}
Applying the triangle inequality and utilizing the previous estimate gives 
\begin{eqnarray}\label{pre_defi}
\lefteqn{
\sum_{j=1}^n\Delta t \sum_{K\in \mathcal T_h}\tau_K\|\nabla(p^j-p_r^j)\|_{0,K}^2}\nonumber \\
&\le & 2 \sum_{j=1}^n\Delta t \sum_{K\in \mathcal T_h}\tau_K\|\nabla(p^j-P_r^{\mathrm{p}} p^j)\|_{0,K}^2 + 
\sum_{j=1}^n\Delta t \sum_{K\in \mathcal T_h}\tau_K\|\nabla z_r^j\|_{0,K}^2 \nonumber \\
&\le& C h^2 \Bigg[\sum_{j=1}^n\Delta t \left\|\partial_t\bu^j-\frac{\bu_r^j-\bu_r^{j-1}}{\Delta t}\right\|_{0}^2
+\sum_{j=1}^n\Delta t \left\|(\bu^j\cdot\nabla) \bu^j-(\bu_r^j\cdot \nabla) \bu_r^j\right\|_0^2\nonumber\\
&&+\sum_{j=1}^n\Delta t \sum_{K\in \mathcal T_h}\|\nu \Delta \bu_r^j-\nu\Delta \bu^j\|_{0,K}^2
+\sum_{j=1}^n\Delta t \left\|\nabla (p^j-P_r^{\mathrm{p}} p^j)\right\|_{0}^2\Bigg].
\end{eqnarray}
We will bound the terms on the right-hand side of \eqref{pre_defi}.
\medskip

As in the proof of Lemma~\ref{le:long_proof}, we start with the case  $X= H_0^1(\Omega)^{d\times d}$.

\emph{First term on the right-hand side of \eqref{pre_defi}.}
To bound the first one, we
start with the triangle inequality
\begin{eqnarray}\label{eq:stab_mot_pres_time_terms}
\lefteqn{
\sum_{j=1}^n\Delta t \left\|\partial_t\bu^j-\frac{\bu_r^j-\bu_r^{j-1}}{\Delta t}\right\|_{0}^2}\nonumber\\
&\le& C \sum_{j=1}^n\Delta t \|\partial_t\bu^j-\partial_t\bu_h^j\|_0^2
+
C \sum_{j=1}^n\Delta t \left\|\partial_t\bu_h^j-\frac{P_r^{\mathrm{v}} \bu_h^j-P_r^{\mathrm{v}} \bu_h^{j-1}}{\Delta t}\right\|_0^2 \nonumber
\\
&&+C\sum_{j=1}^n\Delta t \left\|\frac{P_r^{\mathrm{v}} \bu_h^j-P_r^{\mathrm{v}} \bu_h^{j-1}}{\Delta t}-\frac{\bu_r^j-\bu_r^{j-1}}{\Delta t}\right\|_0^2.
\end{eqnarray}
The first term on the right-hand side of~\eqref{eq:stab_mot_pres_time_terms} is bounded in~\eqref{pre_defi_1},
the second one in \eqref{pre_defi_2}, and the third one in \eqref{pre_defi_3}. Summarizing these estimates 
yields
\begin{eqnarray}\label{eq:eins_h1}
\lefteqn{\sum_{j=1}^n\Delta t \left\|\partial_t\bu^j-\frac{\bu_r^j-\bu_r^{j-1}}{\Delta t}\right\|_{0}^2
\le C_A   h^{2{l-2}}}
\nonumber\\
&&+\frac{C}{h^2}\left(
\hat C_0 \|\be_r^0\|_0^2 + \hat C_1\sum_{k=r+1}^{d_v} \lambda_k + \hat C_2(\Delta t)^2 \int_0^T\left\| \nabla \partial_{tt}\bu_h\right\|_0^2\ ds\right).
\end{eqnarray}

\emph{Second term on the right-hand side of \eqref{pre_defi}.}
Bounding the second term on the right-hand side of \eqref{pre_defi} starts with \eqref{eq:nonli2}
and the triangle inequality
\begin{eqnarray}\label{eq:pres_est_second_term}
\lefteqn{\sum_{j=1}^n\Delta t \left\|(\bu^j\cdot\nabla) \bu^j-(\bu_r^j\cdot \nabla) \bu_r^j\right\|_0^2 }\nonumber\\
&\le & 2 C_4^2\left( \sum_{j=1}^n\Delta t \|\nabla(\bu^j-\bu_h^j)\|_0^2 +
\sum_{j=1}^n\Delta t \|\nabla(\bu_h^j-\bu_r^j)\|_0^2
\right).
\end{eqnarray}
The first term on the right-hand side is bounded by \eqref{eq:cota_grad_div}.
Using again the triangle inequality and the inverse estimate \eqref{inv} for the second term yields 
\begin{equation}\label{eq:pres_est_second_term0}
 \sum_{j=1}^n\Delta t \|\nabla(\bu_h^j-\bu_r^j)\|_0^2 \le  2\sum_{j=1}^n\Delta t \|\nabla(\bu_h^j-P_r^{\mathrm{v}} \bu_h^j)\|_0^2 +  C \frac{T}{h^2} \max_{1\le j\le n} \|\be_r^j\|_0^2.
\end{equation}
Now, the estimate of this term is finished by applying \eqref{eq:cota_pod_0} and \eqref{eq:pre_cota_finalSUPv}.
Summarizing the bounds gives 
\begin{eqnarray*}\lefteqn{\sum_{j=1}^n\Delta t \left\|(\bu^j\cdot\nabla) \bu^j-(\bu_r^j\cdot \nabla) \bu_r^j\right\|_0^2} \\
&\le& 2 C_4^2 T C^2(\bu,p,l+1) h^{2l-2}\\
&& + C C_4^2 T \left(\rho^2\sum_{k={r+1}}^{d_v}\lambda_k
+ T(\Delta t)^2 \int_0^T\|\nabla \partial_{tt}\bu_{h}\|_0^2\ ds\right)\\
&& + C C_4^2 \frac{T}{h^2} \left( C_0\|\be_r^0\|_0^2+
C_1\sum_{k=r+1}^{d_v} \lambda_k +C_2
(\Delta t)^2\int_0^T\|\nabla \partial_{tt}\bu_h\|_0^2\ ds  \right).
\end{eqnarray*}
We notice that $C_4^2TC_i$, $i=0,1,2$ is part of $\hat C_i$,  $i=0,1,2$, respectively, $TC_4^2\rho^2h^2$ is part of $\hat C_1$ and $T^2C_4^2h^2$ of $\hat C_2$, so that
\begin{eqnarray}\label{eq:zwei}
\lefteqn{\sum_{j=1}^n\Delta t \left\|(\bu^j\cdot\nabla) \bu^j-(\bu_r^j\cdot \nabla) \bu_r^j\right\|_0^2
\le  2 C_4^2 T C^2(\bu,p,l+1) h^{2l-2}}
\nonumber\\
&&+  \frac{C}{h^2}\left(
\hat C_0 \|\be_r^0\|_0^2 + \hat C_1\sum_{k=r+1}^{d_v} \lambda_k + \hat C_2(\Delta t)^2 \int_0^T\left\|\nabla \partial_{tt}\bu_h\right\|_0^2\ ds\right).
\end{eqnarray}

\emph{Third term on the right-hand side of \eqref{pre_defi}.}
Applying triangle inequality and the inverse estimate \eqref{inv} yields
\begin{eqnarray}\label{eq_y}
\lefteqn{\sum_{j=1}^n\Delta t \sum_{K\in \mathcal T_h}\|\nu \Delta \bu_r^j-\nu\Delta \bu^j\|_{0,K}^2
\le C \nu^2 \sum_{j=1}^n\Delta t \sum_{K\in \mathcal T_h}\|\Delta (\bu_r^j-P_r^{\mathrm{v}} \bu_h^j)\|_{0,K}^2} \nonumber\\
&& +C \nu^2 \sum_{j=1}^n\Delta t\sum_{K\in \mathcal T_h}\|\Delta(P_r^{\mathrm{v}} \bu_h^j-\bu_h^j)\|_{0,K}^2
+C \nu^2 \sum_{j=1}^n\Delta t\sum_{K\in \mathcal T_h}\|\Delta(\bu_h^j-\bu^j)\|_{0,K}^2\nonumber\\
&\le& C \frac{\nu^2}{h^2}  \sum_{j=1}^n\Delta t \|\nabla \be_r^j\|_{0}^2 +C \frac{\nu^2}{h^2} \sum_{j=1}^n\Delta t\|\nabla(P_r^{\mathrm{v}} \bu_h^j-\bu_h^j)\|_{0}^2\nonumber\\
&&+C \nu^2 \sum_{j=1}^n\Delta t\sum_{K\in \mathcal T_h}\|\Delta(\bu_h^j-\bu^j)\|_{0,K}^2=Y_1+Y_2+Y_3.
\end{eqnarray}
To bound the last term in \eqref{eq_y}, we start by adding and subtracting the Lagrangian interpolant and 
utilizing the triangle inequality
\[
\|\Delta(\bu_h^j-\bu^j)\|_{0,K}^2 \le 2\|\Delta(\bu_h^j-I_h(\bu^j))\|_{0,K}^2+ 2\|\Delta(I_h(\bu^j)-\bu^j)\|_{0,K}^2.
\]
The second term on the right-hand side is bounded with \eqref{interp}. For the first one, 
we apply the inverse inequality \eqref{inv} and add and subtract~$\bu^j$ to arrive at 
\[
\|\Delta(\bu_h^j-I_h(\bu^j))\|_{0,K}^2\le Ch^{-2} \|\nabla(\bu_h^j-\bu^j)\|_{0,K}^2 + Ch^{-2}
\|\nabla(\bu^j-I_h(\bu^j))\|_{0,K}^2,
\]
so that the last term can be bounded again with \eqref{interp}. 
Collecting terms, applying \eqref{eq:cota_grad_div}, and 
assuming $\nu\le h\|\bu\|_{L^\infty(L^\infty)}$ yields 
\begin{eqnarray*}
Y_3&\le& C \nu^2 h^{2l-2} T\left\|\bu\right\|_{L^\infty(H^{l+1})}^2 + C \frac{\nu^2}{h^2} \sum_{j=1}^n\Delta t\left\|\nabla(\bu_h^j-\bu^j)\right\|_0^2\nonumber\\
&\le&C \left(\nu^2 h^{2l-2} T\left\|\bu\right\|_{L^\infty(H^{l+1})}^2 + \|\bu\|_{L^\infty(L^\infty)}^2T C^2(\bu,p,l+1) h^{2l-2}\right).
\end{eqnarray*}
In view of \eqref{laC1u}, we obtain 
\begin{equation}
\label{eq_y3}
Y_3 \le   C_1^2(\bu,p,l+1) h^{2l-2}.
\end{equation}
The first term on the right-hand side of \eqref{eq_y} is bounded in \eqref{eq:pre_cota_finalSUPv} while the second one is bounded in \eqref{eq:cota_pod_0}, hence it is 
\begin{eqnarray*}\label{eq_y1y2}
Y_1+Y_2 &\le& C \frac{\nu}{h^2} \left(C_0\|\be_r^0\|_0^2+
C_1\sum_{k=r+1}^{d_v} \lambda_k
+C_2
(\Delta t)^2\int_0^T\|\nabla(\partial_{tt}\bu_h)\|_0^2\ ds\right)\nonumber\\
&&\ +C\frac{\nu^2 T}{h^2}\left(\rho^2\sum_{k={r+1}}^{d_v}\lambda_k +  T(\Delta t)^2 \int_0^T\|\nabla (\partial_{tt}\bu_{h})\|_0^2\ ds\right).
\end{eqnarray*}
By definition, $\nu C_i$, $i=0,1,2$, is part of $\hat C_i$, $i=0,1,2$, respectively, as well as 
$\nu^2 T^i \rho^{2(2-i)}$, $i=1,2$, is part of~$\hat C_i$, $i=1,2$, respectively.
We conclude for the third term on the right-hand side of~\eqref{pre_defi} that 
\begin{eqnarray}\label{eq:dreih1}
\lefteqn{ \sum_{j=1}^n\Delta t \sum_{K\in \mathcal T_h}\|\nu \Delta \bu_r^j-\nu\Delta \bu^j\|_{0,K}^2
\le    C_1^2(\bu,p,l+1) h^{2l-2} }  \nonumber\\
&&+
\frac{C}{h^2}\left(
\hat C_0 \|\be_r^0\|_0^2 + \hat C_1\sum_{k=r+1}^{d_v} \lambda_k + \hat C_2(\Delta t)^2 \int_0^T\left\|\nabla \partial_{tt}\bu_h\right\|_0^2\ ds\right).
\end{eqnarray}

\emph{Fourth term on the right-hand side of \eqref{pre_defi}.} Also the estimate of this term is 
started with the triangle inequality 
\begin{eqnarray}
\label{finalp_1}
\lefteqn{\sum_{j=1}^n\Delta t \left\|\nabla (p^j-P_r^{\mathrm{p}} p^j)\right\|_{0}^2
\le C \sum_{j=1}^n\Delta t \left\|\nabla (p^j-p_h^j)\right\|_{0}^2} 
\nonumber\\
&&+
C \sum_{j=1}^n\Delta t \left\|\nabla (p_h^j-P_r^{\mathrm{p}} p_h^j)\right\|_{0}^2
+C \sum_{j=1}^n\Delta t \left\|\nabla (P_r^{\mathrm{p}} p_h^j-P_r^{\mathrm{p}} p^j)\right\|_{0}^2.
\end{eqnarray}
Recall that $P_r^{\mathrm{p}}$ is the $L^2(\Omega)$ projection. 
Using the triangle inequality, the interpolation 
estimate \eqref{interp}, and the inverse estimate \eqref{inv} yields for the first term 
\begin{eqnarray*}
\left\|\nabla (p^j-p_h^j)\right\|_{0}^2&\le & 2 \left\|\nabla (p^j-I_h(p^j))\right\|_{0}^2 +2 \left\|\nabla (I_h(p^j)-p_h^j)\right\|_{0}^2
\\
&\le &Ch^{2l-2} \left\| p^j\right\|_{l}^2  + C h^{-2} \left(\left\| I_h(p^j)-p^j\right\|_0^2+\left\|p^j-p_h^j\right\|_{0}^2\right)\\ 
&\le& Ch^{2l-2} \left\| p^j\right\|_{l}^2  + C h^{-2} \left\|p^j-p_h^j\right\|_{0}^2.
\end{eqnarray*}
After having taken the sum over the time instants, \eqref{eq:cota_pre} is applied to estimate 
the term with the $L^2(\Omega)$ norm of the FOM pressure error
\begin{equation*}
 \sum_{j=1}^n\Delta t \left\|\nabla (p^j-p_h^j)\right\|_{0}^2 \le C
 \left( h^{2l-2} \sum_{j=1}^n\Delta t \left\| p^j\right\|_{l}^2 + C_{\rm press}^2(\bu,p,l+1) h^{2l-3}\right).
\end{equation*}
The second term is on the right-hand side of \eqref{finalp_1} is estimated with \eqref{eq:inv_Sp}, observing that the pressure snapshots
are contained in $\mathcal W$, and \eqref{eq:cota_pod_0_pre}, giving 
\[
\sum_{j=1}^n\Delta t \left\|\nabla (p_h^j-P_r^{\mathrm{p}} p_h^j)\right\|_{0}^2
\le \|S^{\mathrm{p}}\|_2 \sum_{j=1}^n\Delta t \left\|p_h^j-P_r^{\mathrm{p}} p_h^j\right\|_{0}^2
\le C \|S^{\mathrm{p}}\|_2 T \sum_{k=r+1}^{d_p}\gamma_k.
\]
For estimating the last term on the right-hand side of \eqref{finalp_1}, the inverse estimate 
\eqref{inv} and the stability of the projection are applied 
\begin{eqnarray*}
\sum_{j=1}^n\Delta t \left\|\nabla (P_r^{\mathrm{p}} p_h^j-P_r^{\mathrm{p}} p^j)\right\|_{0}^2
&\le&  Ch^{-2}\sum_{j=1}^n\Delta t  \left\|P_r^{\mathrm{p}} p_h^j-P_r^{\mathrm{p}} p^j\right\|_{0}^2\\
&\le& Ch^{-2} \sum_{j=1}^n\Delta t \left\|p_h^j-p^j\right\|_{0}^2.
\end{eqnarray*}
The last term is bounded in \eqref{eq:cota_pre}. Thus, the estimate of the fourth term on the 
right-hand side of \eqref{pre_defi} is 
\begin{eqnarray}\label{eq:thm_4th_term_2}
\lefteqn{\sum_{j=1}^n\Delta t \left\|\nabla (p^j-P_r^{\mathrm{p}} p^j)\right\|_{0}^2}\\
&\le& C\left(  h^{2l-2} \sum_{j=1}^n\Delta t \left\| p^j\right\|_{l}^2 + \|S^{\mathrm{p}}\|_2 T \sum_{k=r+1}^{d_p}\gamma_k
+ C_{\rm press}^2(\bu,p,l+1) h^{2l-3}\right).\nonumber
\end{eqnarray}

Estimate \eqref{cota_th_pre} is now obtained by 
 inserting \eqref{eq:eins_h1}, \eqref{eq:zwei}, \eqref{eq:dreih1}, 
and \eqref{eq:thm_4th_term_2} in \eqref{pre_defi}.

\medskip

Next, we consider the case $X=L^2(\Omega)^d$.

\emph{First term on the right-hand side of \eqref{pre_defi}.} Starting point of our estimate is \eqref{eq:stab_mot_pres_time_terms}. To bound the first term on the right-hand side, \eqref{pre_defi_1} is 
used. Arguing in the same way as for obtaining \cite[(4.17)-(4.18)]{wir_NS}, one derives for  
the second term
\[
\sum_{j=1}^n\Delta t \left\|\partial_t\bu_h^j-\frac{P_r^{\mathrm{v}} \bu_h^j-P_r^{\mathrm{v}} \bu_h^{j-1}}{\Delta t}\right\|_0^2
\le \frac{2T}{\tau^2}\sum_{k=r+1}^{d_v}\lambda_k +C (\Delta t)^2\int_0^T\|\partial_{tt}\bu_h\|_0^2 \ ds.
\]
We notice that $T(h/\tau)^2$ is part of $\hat C_{1,1}$ and $h^2$ is part of $\hat C_{2,1}$, so that 
\begin{eqnarray*}
\lefteqn{\sum_{j=1}^n\Delta t \left\|\partial_t\bu_h^j-\frac{P_r^{\mathrm{v}} \bu_h^j-P_r^{\mathrm{v}} \bu_h^{j-1}}{\Delta t}\right\|_0^2
}\nonumber\\
&\le& \frac{C}{h^2}\left(\hat C_{1,1}\sum_{k=r+1}^{d_v}\lambda_k +\hat C_{2,1} (\Delta t)^2\int_0^T\|\partial_{tt}\bu_h\|_0^2 \ ds\right).
\end{eqnarray*}
Finally, the third term in \eqref{eq:stab_mot_pres_time_terms} is bounded by applying Lemma~\ref{le:long_proof}.
Collecting all bounds gives 
\begin{eqnarray}\label{eq:eins_l2}
\lefteqn{\sum_{j=1}^n\Delta t \left\|\partial_t\bu^j-\frac{\bu_r^j-\bu_r^{j-1}}{\Delta t}\right\|_{0}^2}\nonumber\\
& \le &  C_A   h^{2{l-2}} +\frac{C}{h^2}\left(
\hat C_0 \|\be_r^0\|_0^2 + (\hat C_{1,1}+\hat C_{1,2}\left\| S^{\mathrm{v}}\right\|)\sum_{k=r+1}^{d_v} \lambda_k\right.
\\
&&\left. +  (\hat C_{2,1}+\hat C_{2,2}\left\| S^{\mathrm{v}}\right\|)(\Delta t)^2 \int_0^T\left\| \partial_{tt}\bu_h\right\|_0^2\ ds
+\hat C_{3,1}(\Delta t)^2 \int_0^T\left\| \nabla\partial_{tt}\bu_h\right\|_0^2\ ds\right).\nonumber
\end{eqnarray}

\emph{Second term on the right-hand side of \eqref{pre_defi}.}
Starting from \eqref{eq:pres_est_second_term}, the first term on the right-hand side is again bounded by \eqref{eq:cota_grad_div} and the other term is estimated by \eqref{eq:pres_est_second_term0}.
Note that the first term on the right-hand side of  \eqref{eq:pres_est_second_term0} already appeared in \eqref{aux3} (main part of $X_3$) and it was bounded in \eqref{eq:need0}. The other term is 
bounded by applying  \eqref{eq:pre_error3_b_L2}.  Collecting
all bounds yields 
\begin{eqnarray*}
\lefteqn{\sum_{j=1}^n\Delta t \left\|(\bu^j\cdot\nabla) \bu^j-(\bu_r^j\cdot \nabla) \bu_r^j\right\|_0^2} \\
&\le& 2 C_4^2 T C^2(\bu,p,l+1) h^{2l-2} +  C C_4^2 T \left( \rho^2\|S^{\mathrm{v}}\|_2\sum_{k={r+1}}^{d_v}\lambda_k\right.\nonumber\\
&& \left. + T (\Delta t)^2 \left(\|S^{\mathrm{v}}\|_2 \int_0^T\|\partial_{tt}\bu_{h}\|_0^2\ ds
+ \int_0^T\|\nabla\partial_{tt}\bu_{h}\|_0^2\ ds\right)\right)\\
&& + C C_4^2 \frac{T}{h^2} \left(  C_0\|\be_r^0\|_0^2+(C_{1,1} +C_{1,2}\left\|S^{\mathrm{v}}\right\|_2)\sum_{k=r+1}^{d_v} \lambda_k \right.
\nonumber\\
&&\left. +(C_{2,1} + C_{2,2}\left\|S^{\mathrm{v}}\right\|_2)
(\Delta t)^2\int_0^T\|
\partial_{tt}\bu_h\|_0^2\ ds \right).
\end{eqnarray*}
We notice that, e.g.,  $C_4^2TC_{i,j}$, $i,j=1,2$ is part of $\hat C_{i,j}$, respectively, as well as 
$C_4^2 T^i \rho^{2(2-i)}h^2$, $i=1,2$ is part of
$\hat C_{i,2}$, $i=1,2$, respectively, so that 
\begin{eqnarray}\label{eq:zweil2}
\lefteqn{\sum_{j=1}^n\Delta t \left\|(\bu^j\cdot\nabla) \bu^j-(\bu_r^j\cdot \nabla) \bu_r^j\right\|_0^2
\le 2 C_4^2 T C^2(\bu,p,l+1) h^{2l-2}} \nonumber\\
&& + \frac{C}{h^2}\left(\hat C_0\|\be_r^0\|_0^2 \vphantom{\sum_{k=r+1}^{d_v} \lambda_k}
+
(\hat C_{1,1} +\hat C_{1,2}\left\|S^{\mathrm{v}}\right\|)\sum_{k=r+1}^{d_v} \lambda_k \right. \\
&& \left. +(\hat C_{2,1} + \hat C_{2,2}\left\|S^{\mathrm{v}}\right\|)
(\Delta t)^2\int_0^T\|
\partial_{tt}\bu_h\|_0^2\ ds + \hat C_{3,1} (\Delta t)^2 \int_0^T\left\|\nabla \partial_{tt}\bu_h\right\|_0^2\ ds \right).\nonumber
\end{eqnarray}

\emph{Third term on the right-hand side of \eqref{pre_defi}.} The estimate starts with \eqref{eq_y}.
Then, the term $Y_1$ is already bounded  in \eqref{eq:pre_error3_b_L2}. Apart of the factor in front of
the sum, the term $Y_2$ is the same as $X_3$ in \eqref{aux2} and this term is estimated in 
\eqref{eq:need0}. Finally,  $Y_3$, which does not contain a projection, is bounded as in \eqref{eq_y3}
under the assumption that $\nu\le h\|\bu\|_{L^\infty(L^\infty)}$. Collecting all estimates 
gives 
\begin{eqnarray*}
X_3 &\le& C\frac{\nu}{h^2} \left( C_0\|\be_r^0\|_0^2+(C_{1,1} +C_{1,2}\left\|S^{\mathrm{v}}\right\|_2)\sum_{k=r+1}^{d_v} \lambda_k\right.
\nonumber\\
&&\left.+(C_{2,1} + C_{2,2}\left\|S^{\mathrm{v}}\right\|_2)
(\Delta t)^2\int_0^T\|
\partial_{tt}\bu_h\|_0^2\ ds \right)\\
&& + C\frac{\nu^2}{h^2} T \left(\rho^2\|S^{\mathrm{v}}\|_2\sum_{k={r+1}}^{d_v}\lambda_k\right.\nonumber\\
&& \left. + T (\Delta t)^2 \left(\|S^{\mathrm{v}}\|_2 \int_0^T\|\partial_{tt}\bu_{h}\|_0^2\ ds
+ \int_0^T\|\nabla\partial_{tt}\bu_{h}\|_0^2\ ds\right)\right)\\
&&+  C_1^2(\bu,p,l+1) h^{2l-2}.
\end{eqnarray*}
Notice that $\nu C_{0}$ and $\nu C_{i,j}$ are part of $\hat C_0$ and $\hat  C_{i,j}$, for $1\le i,j\le 2$, respectively,
that
$\nu^2 T\rho^2$ is part of~$\hat C_{1,2}$ and~$\nu^2 T^2$ part of $\hat C_{2,2}$ and $\hat C_{3,1}$.
Thus,
\begin{eqnarray} \label{eq:thm_l2_third}
\lefteqn{X_3 \le  C_1^2(\bu,p,l+1) h^{2l-2} }\nonumber\\
&& + \frac{C}{h^2}\left(\hat C_0\|\be_r^0\|_0^2 + (\hat C_{1,1} +\hat C_{1,2}\left\|S^{\mathrm{v}}\right\|_2)\sum_{k=r+1}^{d_v} \lambda_k \right. \\
&& \left.+ \hat C_{2,2} (\Delta t)^2 \left\|S^{\mathrm{v}}\right\|_2 \int_0^T\|\partial_{tt}\bu_{h}\|_0^2\ ds
+ \hat C_{3,1} (\Delta t)^2 \int_0^T\|\nabla\partial_{tt}\bu_{h}\|_0^2\ ds\right).\nonumber
\end{eqnarray}

\emph{Fourth term on the right-hand side of \eqref{pre_defi}.} 
This term does not depend on the projection that is used for the velocity and thus estimate \eqref{eq:thm_4th_term_2} is applicable. 

Now, the estimate \eqref{cota_th_pre_l2} is obtained by 
inserting \eqref{eq:eins_l2} \eqref{eq:zweil2} \eqref{eq:thm_l2_third}, and \eqref{eq:thm_4th_term_2} in \eqref{pre_defi}.
\end{proof}

\section{Numerical Studies} \label{sec:numres}

As usual, the numerical studies shall support the numerical analysis. Since the new analytic results are only
with respect to the computation of a POD-ROM pressure, the focus of the presented numerical results will be
on the pressure. Important aspects are the order of convergence, the robustness for small viscosity
coefficients and a comparison between the supremizer enrichment (SE-ROM) approach \eqref{eq:pres}
and the stabilization-motivated (SM-ROM) method \eqref{pre_moti}.

For assessing the pressure, a discrete-in-time approximation of the error in $L^2(0,T;L^2(\Omega))$ is used
\[
\Vert p - p_r \Vert_{l^2(L^2)}^2 :=  \sum_{j=1}^n\Delta t \|p^j - p_r^j\|_{0}^2,
\]
which is the norm on the left-hand side of \eqref{eq:cota_pre_supr},
as well as a locally scaled error of the form
\[
\vert p - p_r \vert_{l^2(H^1_K)}^2
  :=  \sum_{j=1}^n\Delta t \sum_{K\in \mathcal T_h}\tau_K\|\nabla p^j - \nabla p_r^j\|_{0,K}^2.
\]
Note that this error is connected to the numerical analysis since it is on the left-hand side of \eqref{cota_th_pre_l2}. For demonstrating exemplarily the impact of the grad-div stabilization on the
POD-ROM velocity, the following errors will be considered
\begin{eqnarray*}
  \Vert \bu - \bu_r \Vert_{l^2(L^2)}^2
  &:= & \sum_{j=1}^n \Delta t \|\bu^j - \bu_r^j\|_0^2, \\
  \Vert \div \bu_r\Vert_{l^2(L^2)}^2
  &:=& \sum_{j=1}^n \Delta t \|\div \bu_r^j\|_0^2.
\end{eqnarray*}

All simulations were performed with the code \emph{ParMooN} \cite{WB_J17}.

\subsection{No-Flow Problem with Complicated Pressure} \label{ssec:noflow}

This example possesses a prescribed solution where the velocity is very simple and the pressure
quite complicated in the sense that it cannot be separated with respect to the variables.
Let $\Omega = (0,1)^2$ be the unit square and $T = 1$, and prescribe the following solution of \eqref{NS}:
\begin{align*}
  \bu(t, \bx) &= && \boldsymbol{0} && \text{ in } [0, T] \times \Omega, \\
  p(t, \bx)   &=&& \tanh\Big(25(x_1 - v(t))\Big) - \tanh\Big(25(1 - x_1 + v(t))\Big) && \\
                          &&&+ \tanh\Big(17(x_2 - w(t))\Big) - \tanh\Big(17(1 - x_2 + w(t))\Big) && \text{ in } [0, T] \times \Omega,
\end{align*}
with oscillatory displacements
\[
    v(t) = \frac 1 2 \big(1 + \sin(4 \pi t)\big), \quad w(t) = \frac 1 2 \big(1 + \sin(6 \pi t)\big).
\]
See Figure~\ref{fig:noflow_pressure} for an illustration of $p$.
This pressure field is smooth and periodic in time with period $T = 1/2$ and it holds that
$\int_\Omega  p \  d\bx = 0$ for all $t$.
The relatively sharp and moving steps induced by the hyperbolic tangent ensure that it cannot be well
approximated by a small number of spatial pressure modes at all times.
The pair $(\bu, p)$ satisfies \eqref{NS}
with homogeneous Dirichlet boundary conditions and right-hand side
$\bff = \nabla p$.

\begin{figure}[t!]
  \centering
  \includegraphics[width = 0.45 \textwidth]{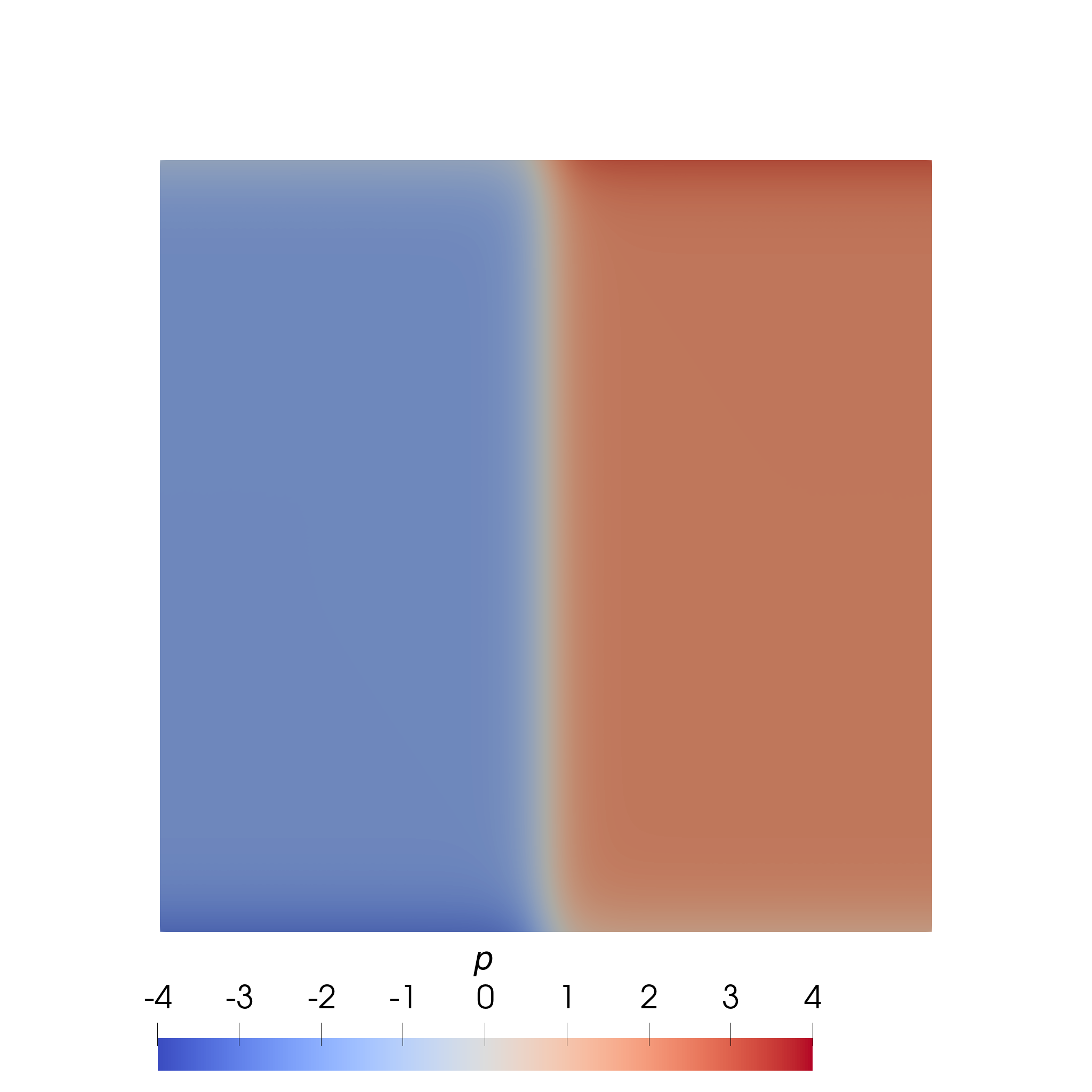}
  \includegraphics[width = 0.45 \textwidth]{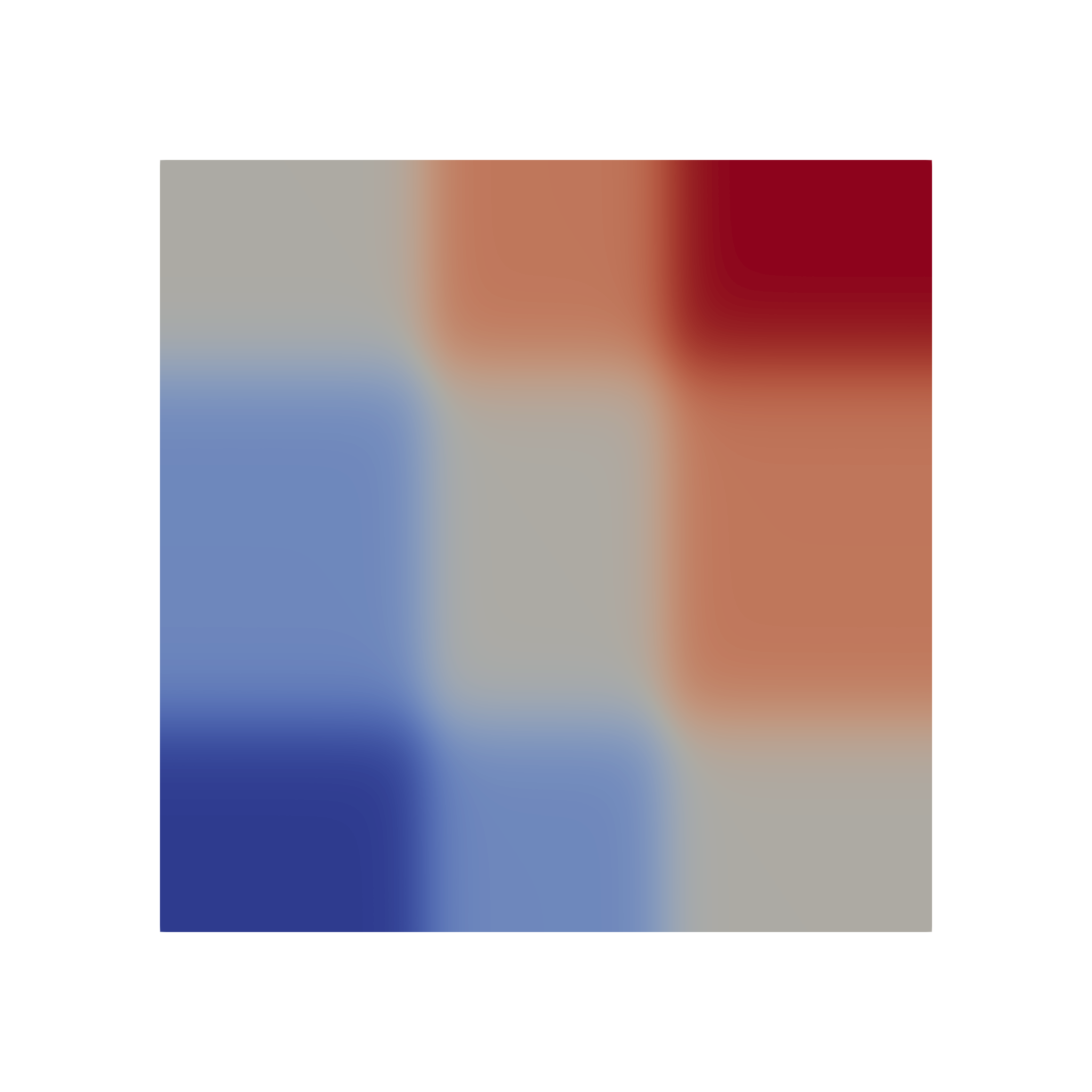}
  \caption{Pressure field $p$  for the no-flow problem at $t = 0$ (left) and $t = 0.725$ (right).}
  \label{fig:noflow_pressure}
\end{figure}

The impact of varying the grid, the viscosity coefficient, and the number of
velocity and pressure modes on the POD-ROM errors will be studied.

For the computations we used Taylor--Hood pairs of spaces with
$l = 2$, i.e., continuous piecewise quadratic velocities and continuous piecewise
linear pressures on the irregular triangular mesh shown in Figure~\ref{fig:noflow_mesh}
and its first three uniform refinements. Mesh statistics are provided in
Table~\ref{tab:noflow_mesh}.

\begin{figure}[t!]
  \centering
  \includegraphics[width = 0.5 \textwidth]{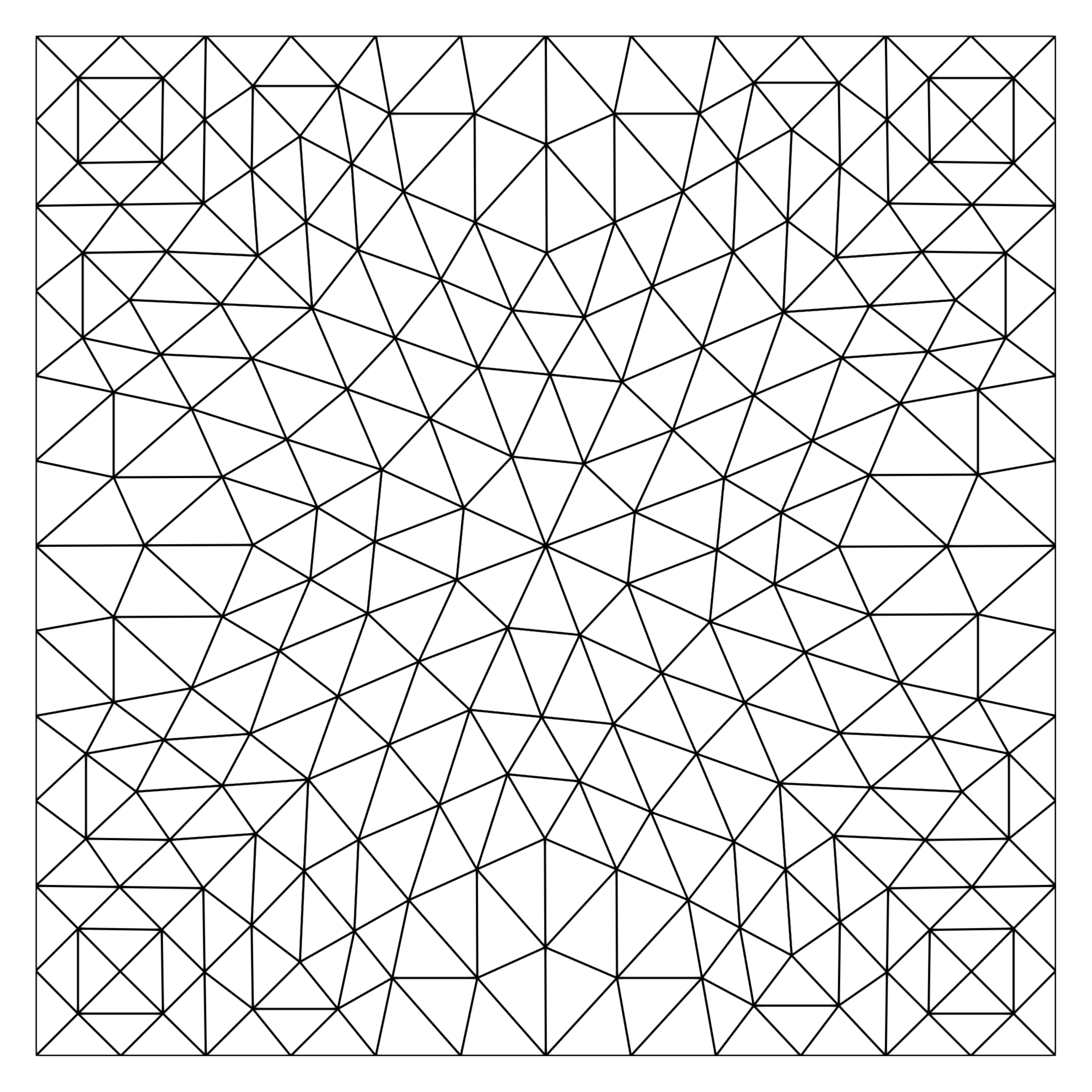}
  \caption{The computational mesh for the no-flow problem, $L = 1$.}
  \label{fig:noflow_mesh}
\end{figure}

\begin{table}[t!]
  \begin{center}
    \caption{Mesh statistics for the no-flow problem:
      refinement level $L$,
      number of triangular cells $N_K$,
      largest cell diameter $h$,
      velocity and pressure space dimensions.
    }
    \label{tab:noflow_mesh}
    \begin{tabular}{l|c|c|c|c}
      $L$ & $N_K$ &   $h$ & $\mathrm{dim}(\boldsymbol X^2_h)$ & $\mathrm{dim}(Q^1_h)$ \\ [0.2em]
      \hline
        1 &   416 & 0.106 &                              1762 &                               233 \\ [0.2em]
        2 &  1664 & 0.053 &                              6850 &                               881 \\ [0.2em]
        3 &  6656 & 0.027 &                             27010 &                              3425 \\ [0.2em]
        4 & 26624 & 0.013 &                            107266 &                             13505
    \end{tabular}
  \end{center}
\end{table}

Regardless of $L$, every simulation used the BDF2 scheme as time integrator with the same
relatively small time step $\Delta t = 0.005$. The grid comparison in
Section~\ref{sssec:noflow_grid} will show that this step size was small enough to make
the temporal discretization error negligible. Except in the case of the viscosity
comparison presented in Section~\ref{sssec:noflow_visc}, the viscosity was
$\nu = 0.01$. We will also compare results with grad-div stabilization using
$\mu = 0.1$, which is a common order of magnitude for this parameter and second order
Taylor--Hood elements, and without grad-div stabilization, i.e., $\mu = 0$.

For each ROM computation, the reduced order velocity space $\bU^r$ was
built by applying POD  with the $L^2(\Omega)^d$ inner product to the full order computation snapshots' temporal
derivatives $\tau \partial_t \bu_h^1, \ldots, \tau \partial_t \bu_h^{201}$
with $\tau = 1/6$ and the scaled snapshot average $\sqrt{201} \bar \bu_h$.
The choice of $\tau$ corresponds roughly to a characteristic time scale of
$p$. The reduced order velocity was computed using
\eqref{eq:pod_method2} with $\bu_r \in \bar \bu_h + \bU_r$.
Notice that as $\bu \equiv \boldsymbol{0}$, the computed velocities
$\bu_h$ and $\bu_r$ are in all cases pure noise that should be expected to
decrease with mesh refinement. It is well known that the divergence of the discrete velocity
fields computed with pairs of Taylor--Hood finite elements might be quite large, e.g., see
\cite[Example~4.31]{John}.

Pressure modes were computed by applying POD to the pressure fluctuation
snapshots $p_h^1 - \bar p_h, \ldots, p_h^{201} - \bar p_h$ to give the
reduced order pressure space $\mathcal W^r$. The reduced order pressure was
computed for $p_r \in \bar p_h + \mathcal W^r$ using both the SE-ROM method
\eqref{eq:pres} and the SM-ROM method \eqref{pre_moti} with $\tau_K = h_K^2$.

In each case, POD modes corresponding to eigenvalues $\lambda < 10^{-10}$ were
discarded as noisy. Table~\ref{tab:noflow_eigenvalues} shows an overview of the
results for $\nu = 0.01$. The lower noise level of the velocity results with
grad-div stabilization is clearly visible in the smaller number of
velocity modes corresponding to large indices, as well as in the eigenvalue plots in
Figure~\ref{fig:noflow_eigenvalues}.

\begin{table}[t]
  \begin{center}
    \caption{POD statistics for the no-flow problem, $\nu = 0.01$:
      refinement level $L$,
      number of velocity modes $d_{v, \mathrm{gd}}$, $d_{v, \mathrm{ngd}}$
        with and without grad-div stabilization,
      number of pressure modes $d_{p, \mathrm{gd}}$, $d_{p, \mathrm{ngd}}$
        with and without grad-div stabilization.
    }
    \label{tab:noflow_eigenvalues}
    \begin{tabular}{l||c|c||c|c}
      $L$ & $d_{v, \mathrm{gd}}$ & $d_{p, \mathrm{gd}}$ & $d_{v, \mathrm{ngd}}$ & $d_{p, \mathrm{ngd}}$ \\ [0.2em]
      \hline
        1 &                  109 &                   57 &                   111 &                    56 \\ [0.2em]
        2 &                   92 &                   58 &                    96 &                    59 \\ [0.2em]
        3 &                   61 &                   37 &                    75 &                    48 \\ [0.2em]
        4 &                   40 &                   38 &                    64 &                    38
    \end{tabular}
  \end{center}
\end{table}

\begin{figure}[t]
  \centering
  \includegraphics[width = 0.45 \textwidth]{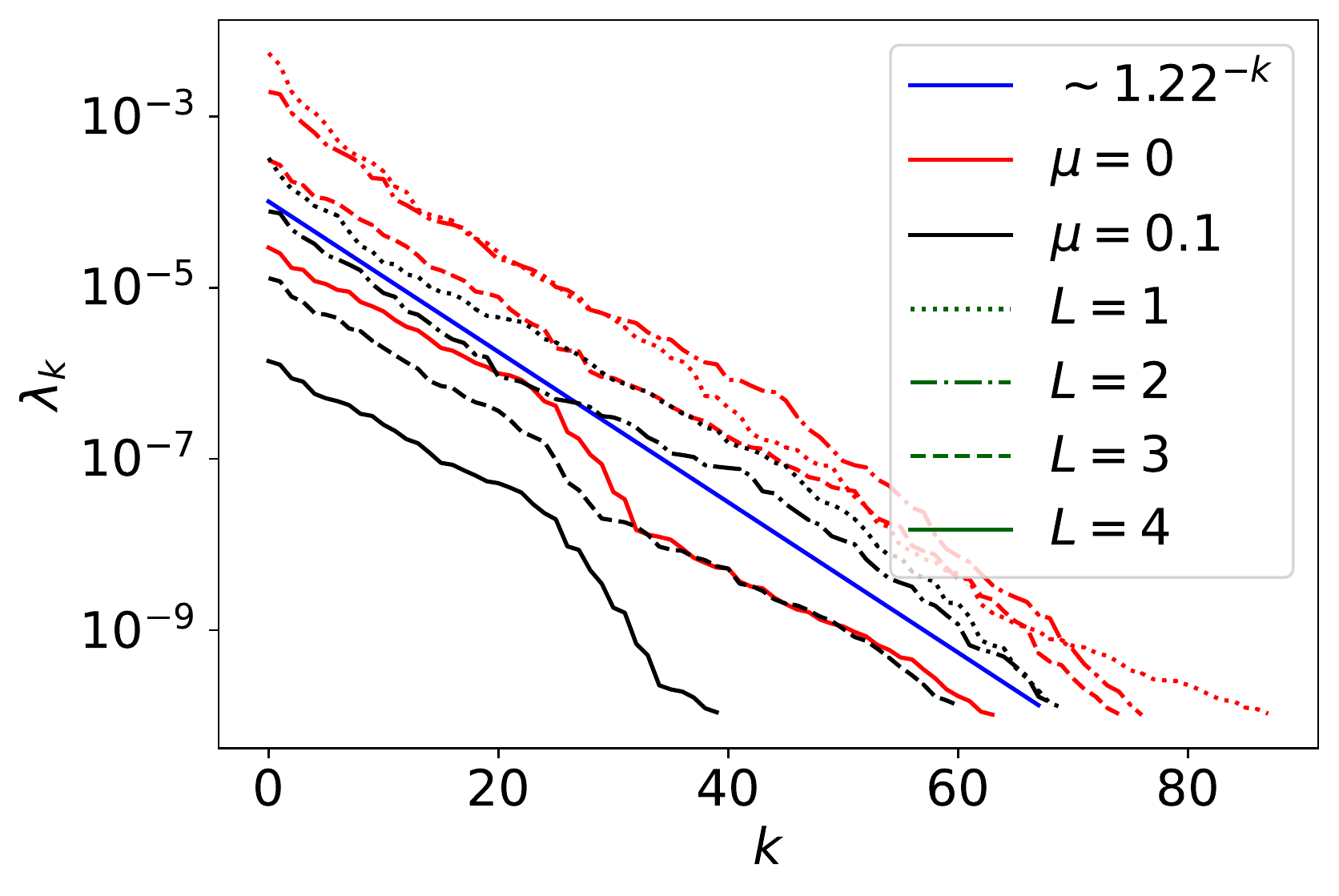}
  \includegraphics[width = 0.45 \textwidth]{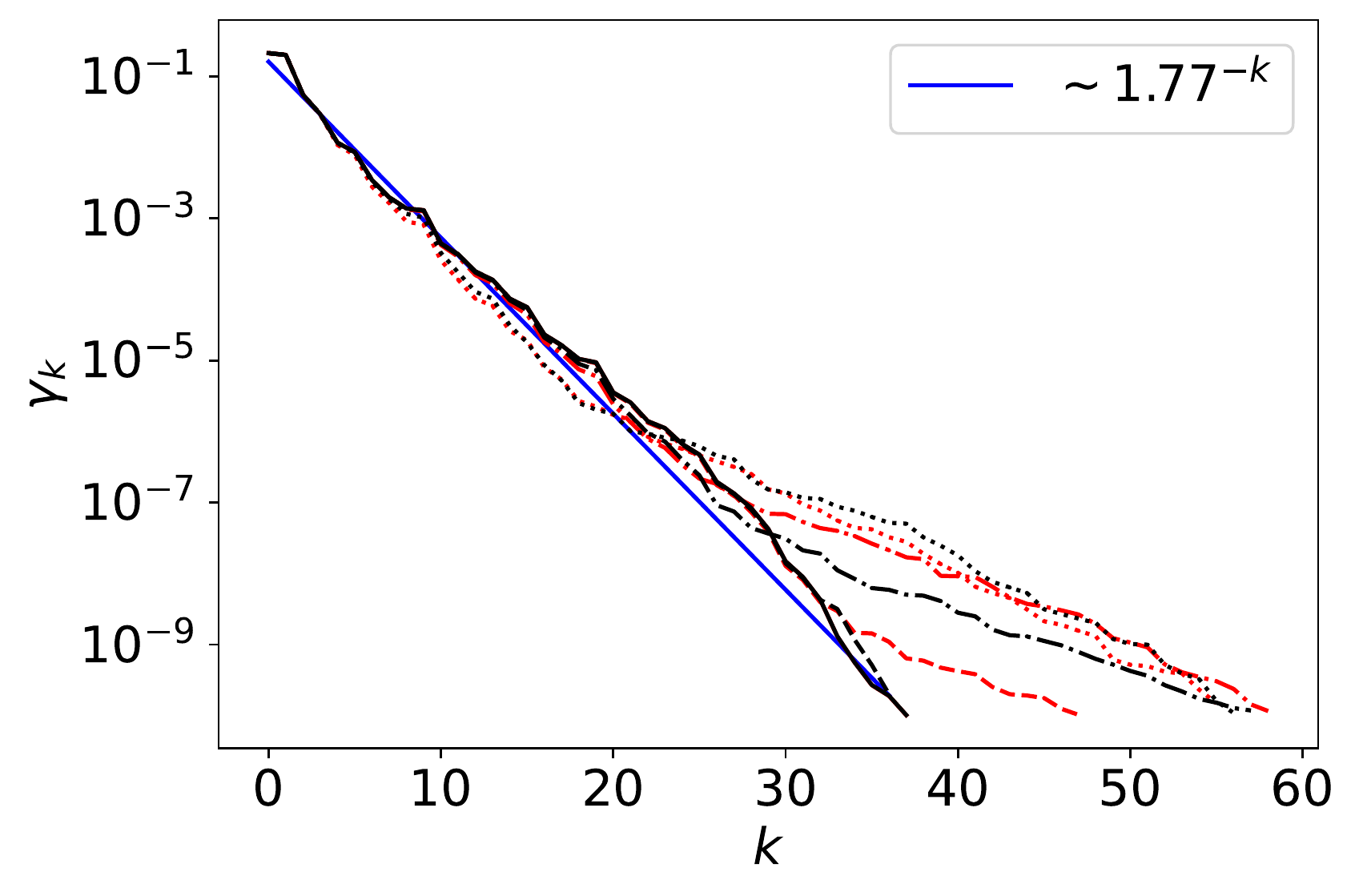}
  \caption{Velocity and pressure eigenvalues for the no-flow problem, varying grid size.}
  \label{fig:noflow_eigenvalues}
\end{figure}

Notice that, in each comparison below, the SE-ROM and SM-ROM velocities are the
same, only the pressure is computed using a different scheme.

\subsubsection{Convergence with Respect to Grid Refinement} \label{sssec:noflow_grid}

Using as many velocity and pressure modes as were available, we investigated the
reduced order models' convergence as the underlying mesh is refined.

Figure~\ref{fig:noflow_grid_u} shows velocity and divergence errors along with
expected and empirically computed orders of convergence.
These orders, three and two, respectively, meet the expectations. They are
even slightly better on this range of mesh sizes. A crucial observation is that using grad-div stabilization
improves both errors notably, e.g., with respect to the divergence by about an order of
magnitude.

\begin{figure}[t]
  \centering
  \includegraphics[width = 0.45 \textwidth]{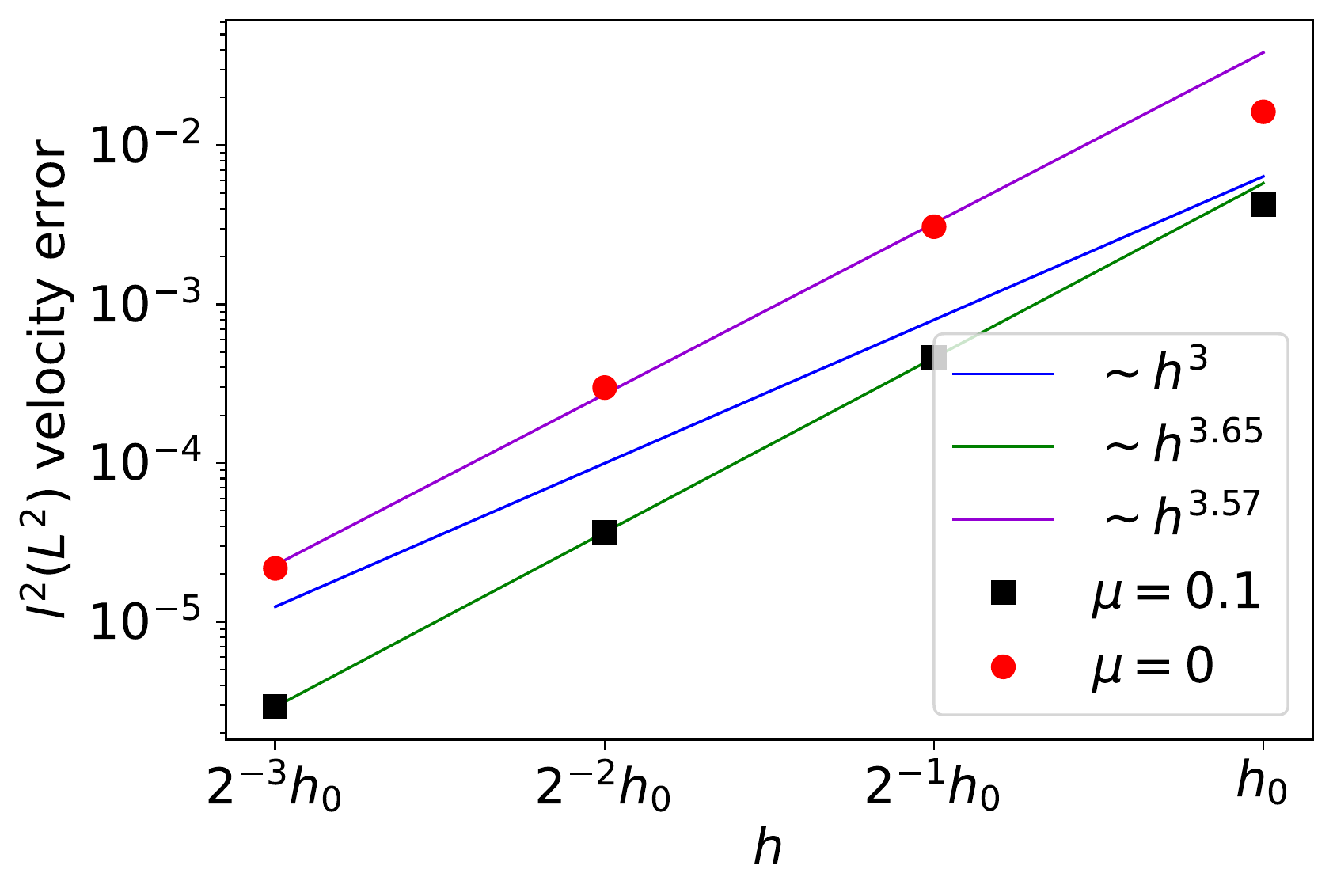}
  \includegraphics[width = 0.45 \textwidth]{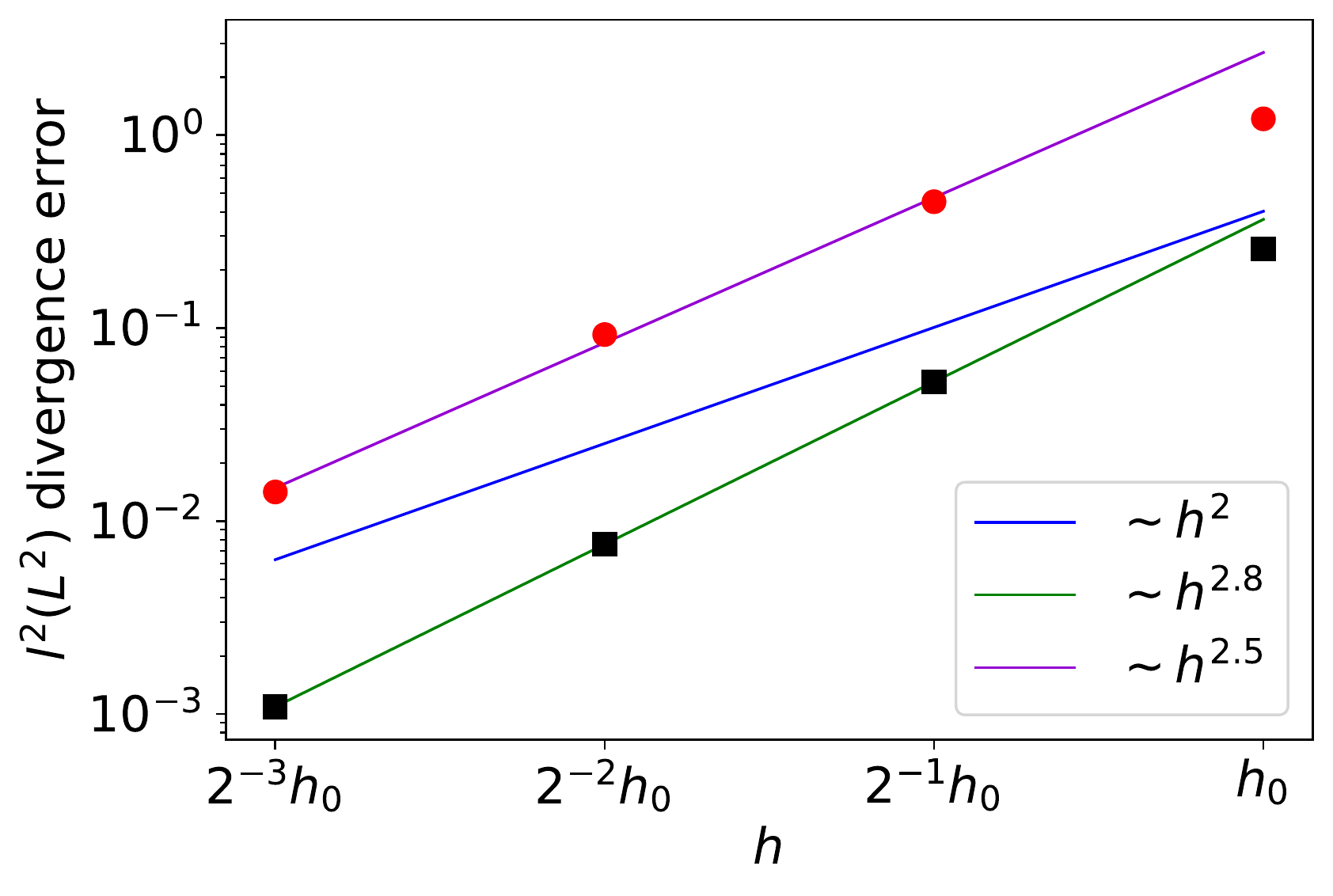}
  \caption{Velocity and divergence errors for the no-flow problem, varying grid size.}
  \label{fig:noflow_grid_u}
\end{figure}

Pressure errors, along with expected
and estimated orders of convergence, are presented in Figure~\ref{fig:noflow_grid_p}.
Since $X=L^2(\Omega)^d$, the error bound \eqref{cota_th_pre_l2} is applicable, which leads to the order
of convergence  $h^{l - 1/2} = h^{3/2}$ for the norm on the left-hand side of  \eqref{cota_th_pre_l2}.
Note that this norm is roughly the gradient of the pressure error multiplied by the mesh width.
Again, the error reductions on the considered meshes are somewhat larger than the predicted ones.
Note that the order $3/2$ is caused only from estimate \eqref{eq:cota_pre} for the FOM pressure error and that
already in the numerical studies of \cite{NS_grad_div} a higher order of convergence for this error was observed.
All other spatial error terms in \eqref{cota_th_pre_l2} are of second order.
The difference between the
velocities with and without grad-div stabilization is too small to greatly
influence the pressure errors. One can observe that the SE-ROM method gives
noticeably smaller pressure errors in the $l^2(L^2)$ norm compared with the SM-ROM method,
but not in the $l^2(H^1_K)$ seminorm. The
$l^2(H^1)$ error (not shown) also does not show a large difference.

\begin{figure}[t]
  \centering
  \includegraphics[width = 0.45 \textwidth]{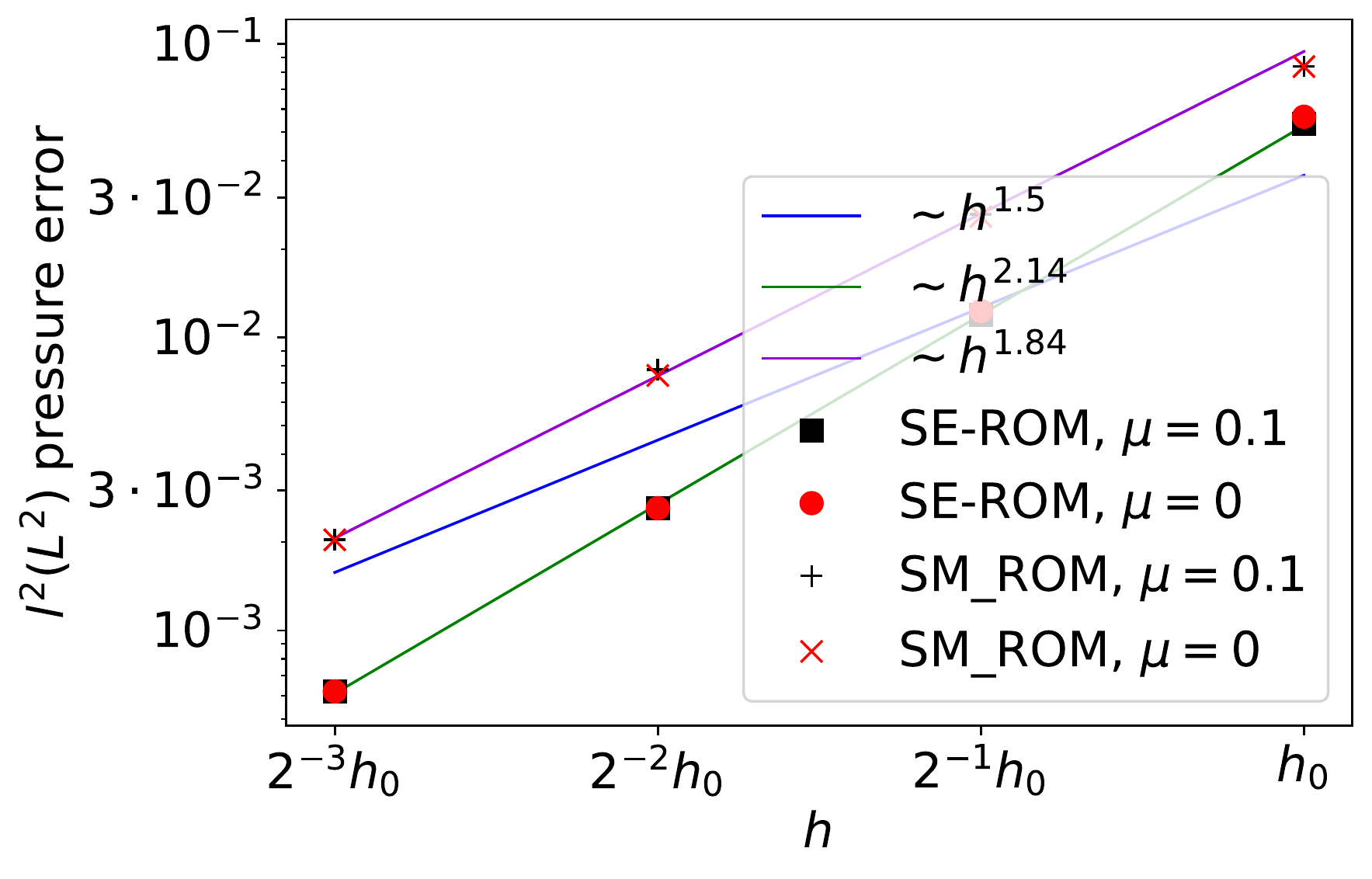}
  \includegraphics[width = 0.45 \textwidth]{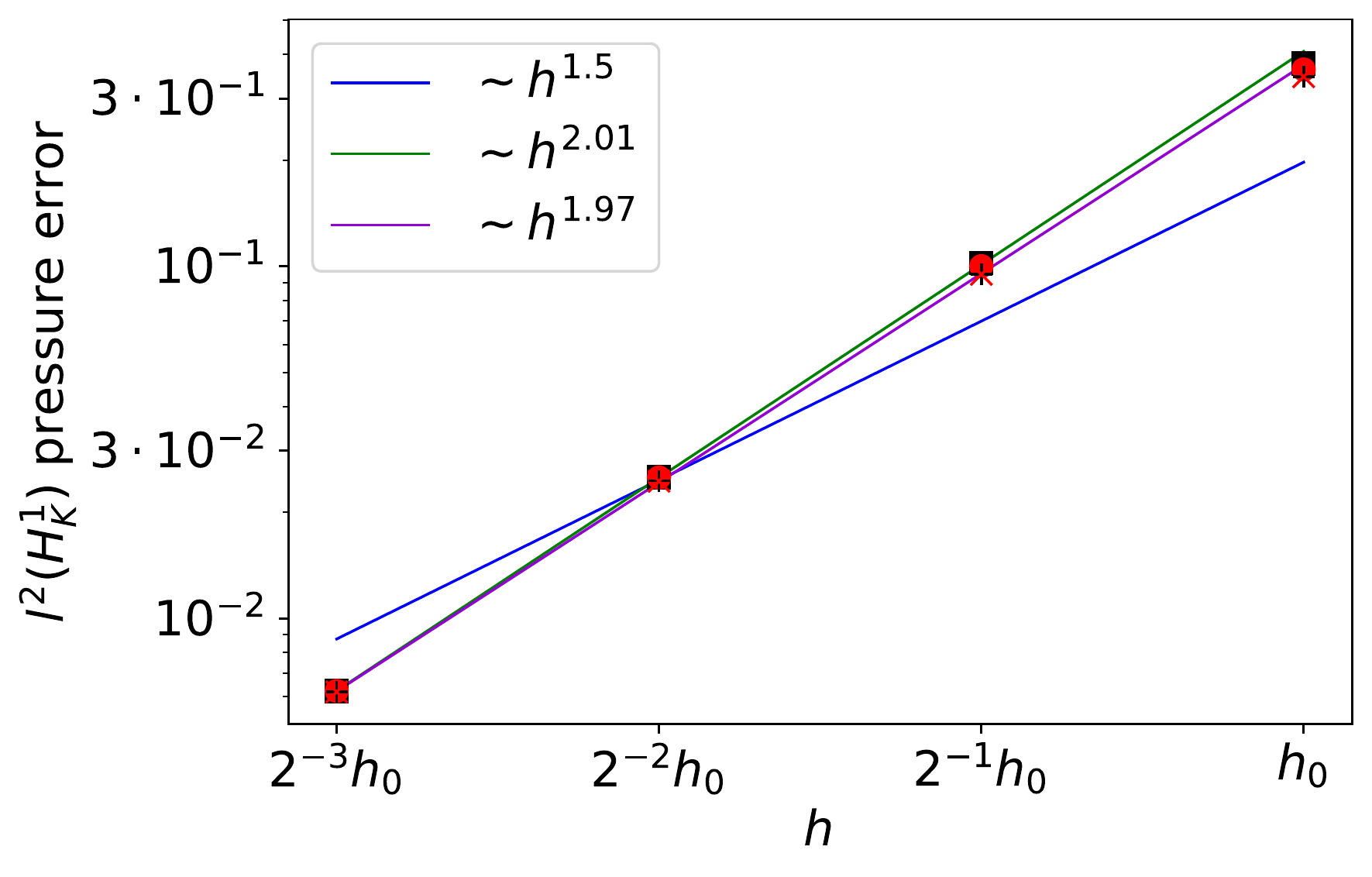}
  \caption{Pressure errors for the no-flow problem, varying grid size.}
  \label{fig:noflow_grid_p}
\end{figure}

\subsubsection{Convergence with Respect to Increasing Ranks} \label{sssec:noflow_rank}

At each refinement level of the mesh, we compared the error of the reduced order
models using $r \in \{2, 3, 4, 6, 8, 12, 16, 24, 32\}$ velocity and pressure
modes.

For the sake of brevity and for concentrating on the topic of this paper, only results for the
pressure will be presented\footnote
{
  As the prescribed velocity is zero, the computed velocity is only a discretization error and
  numerical noise; the ROM results are therefore also only noise and vary very little with the
  velocity ROM's rank.
}.
Notice that in each picture of Figure~\ref{fig:noflow_rank_p}
the horizontal axis is decreasing to the right and is marked with the fraction of remaining 
pressure eigenvalues
\[
\mathcal R_p = \sum_{k=r+1}^{d_p} \gamma_k \Bigg/ \sum_{k=1}^{d_p} \gamma_k.
\]
It becomes clear that the moving steps in
$p$ do require a substantial number of pressure modes to
approximate them well at all times. For decreasing $\mathcal R_p$, the
graphs level off as the FOM errors eventually dominate the rank errors. There are almost no differences
of the results between using the grad-div stabilization for the velocity simulations or not. Likewise,
both studied methods for computing a POD-ROM pressure behave almost identically.

\begin{figure}[t!]
  \centering
  \includegraphics[width = 0.45 \textwidth]{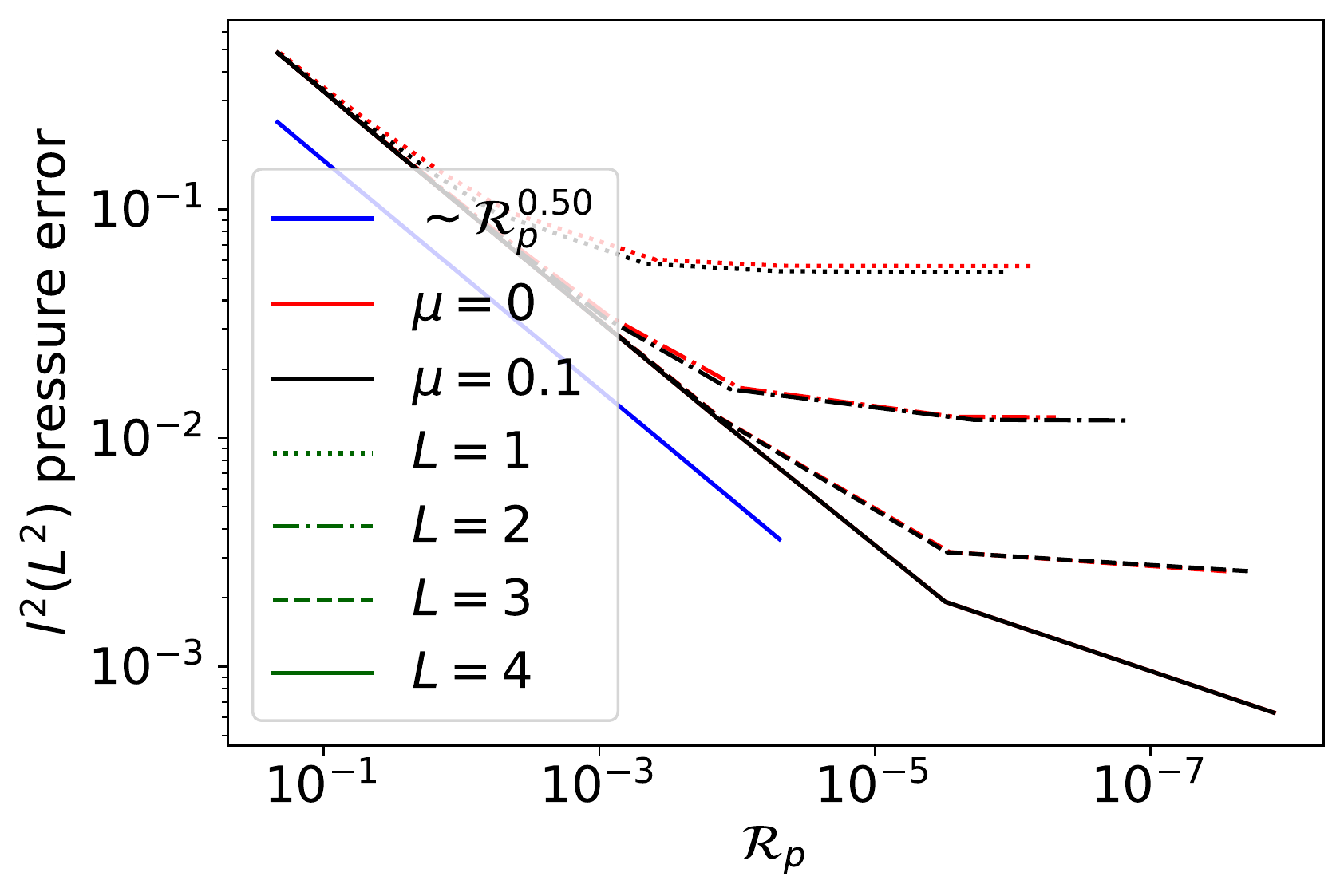}
  \includegraphics[width = 0.45 \textwidth]{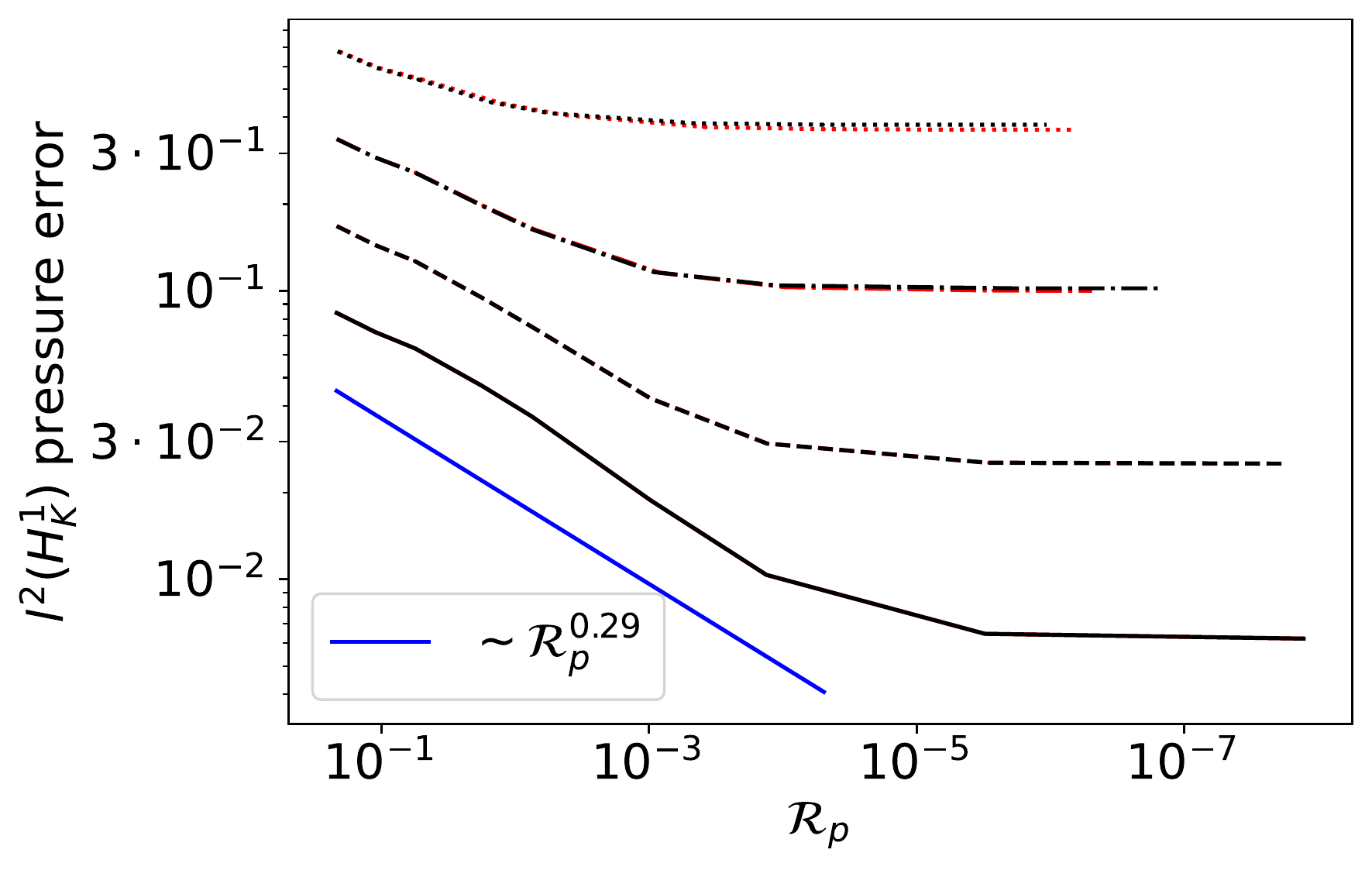}
  \includegraphics[width = 0.45 \textwidth]{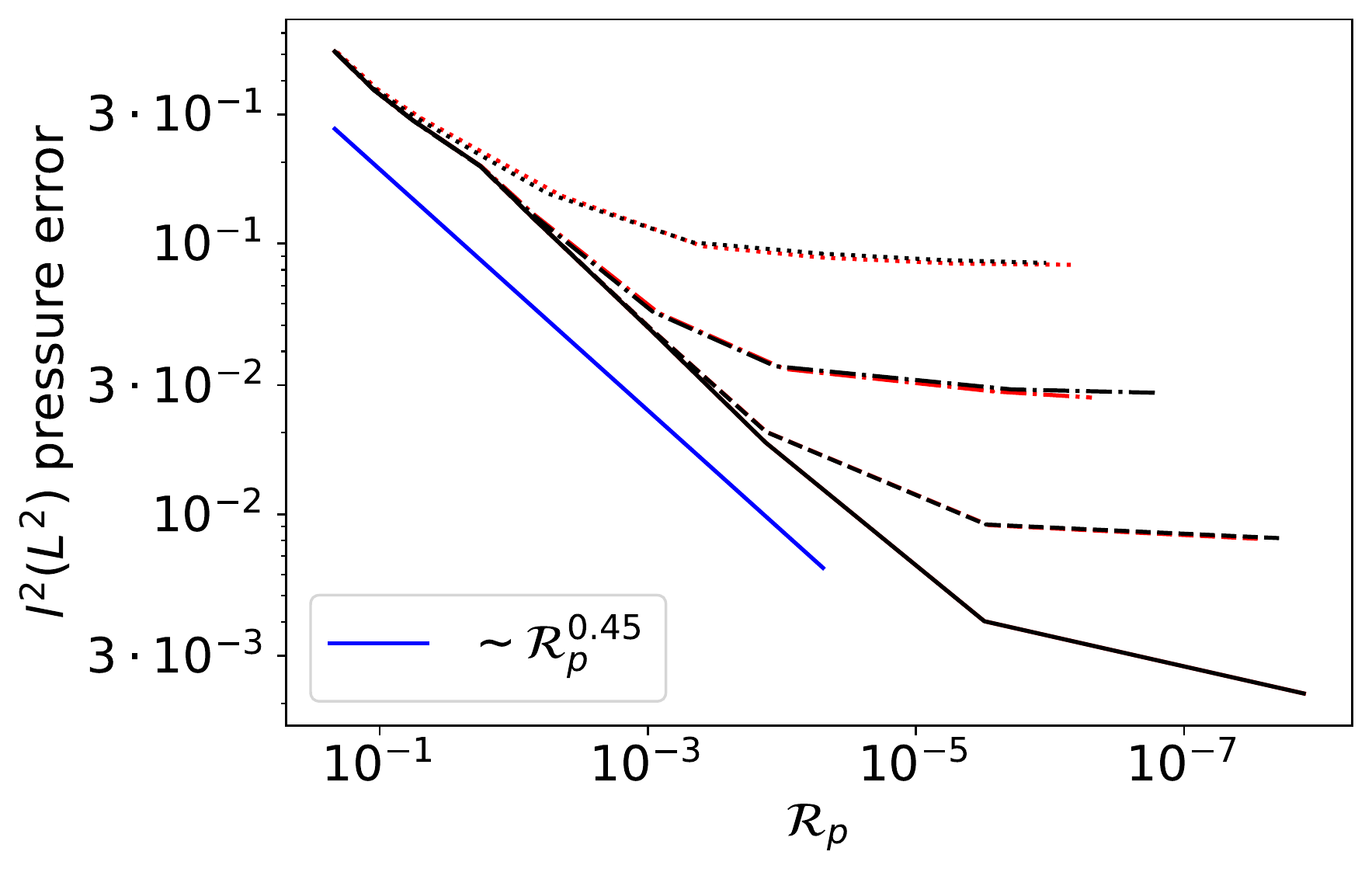}
  \includegraphics[width = 0.45 \textwidth]{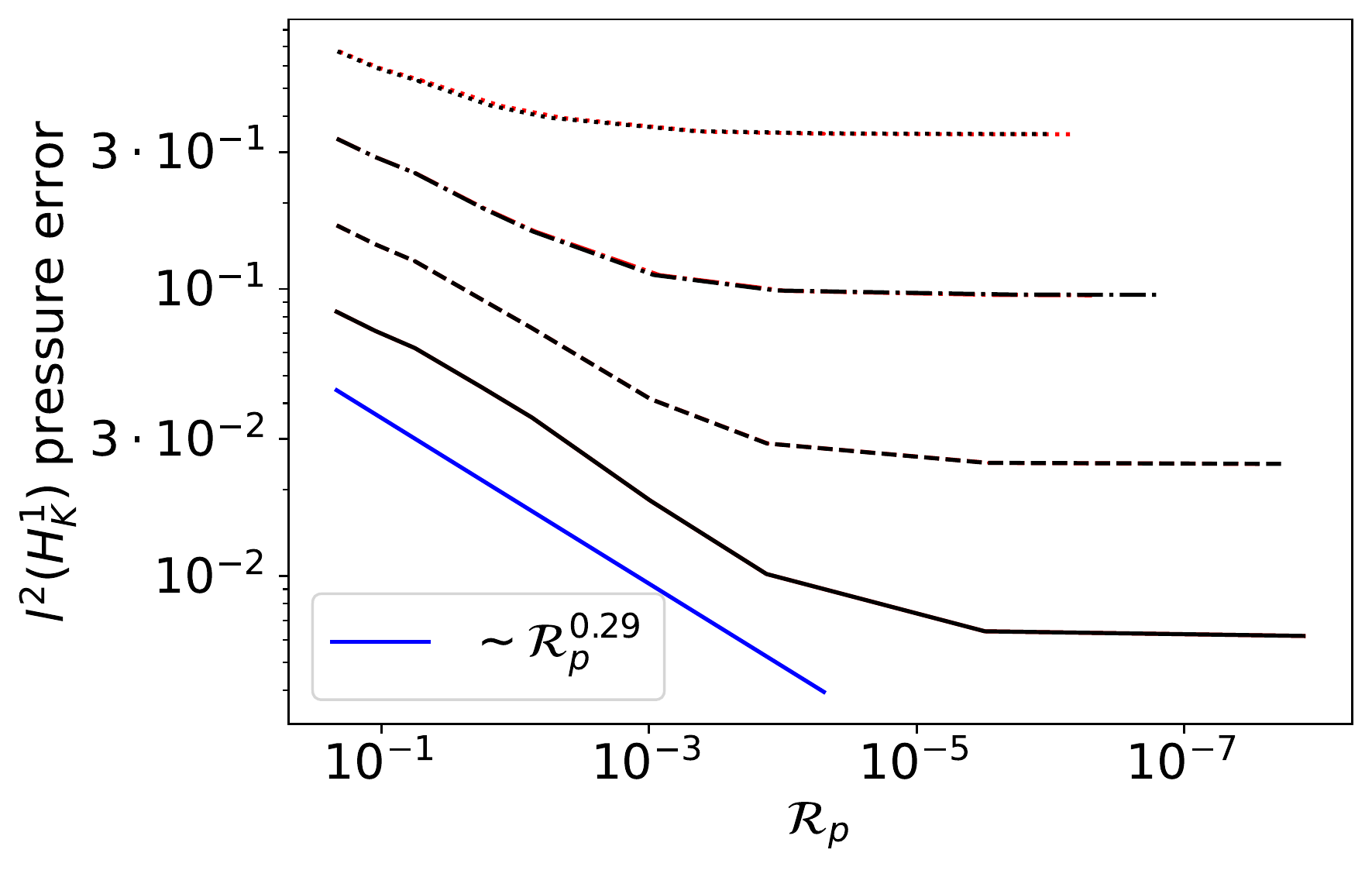}
  \caption{Pressure errors for the no-flow problem,
    varying number of velocity and pressure modes.
    Upper row: SE-ROM. Lower row: SM-ROM.}
  \label{fig:noflow_rank_p}
\end{figure}

\subsubsection{Impact of the Size of the Viscosity Coefficient} \label{sssec:noflow_visc}

Finally, a study on the impact of varying the viscosity coefficient on the errors of
full-rank reduced order simulations will be presented. In this study, $L = 3$ was fixed and the
situations that $\nu \in \{ 1, 0.1, 0.01, 10^{-4}, 10^{-6}, 10^{-8} \}$ were considered.

Figure~\ref{fig:noflow_visc_u} shows the substantial impact of the viscosity on
the presence of velocity noise (recall that $\bu = \boldsymbol 0$), particularly without grad-div stabilization.
In particular, for very low viscosity coefficients the divergence error with $\mu = 0.1$ is
roughly three orders of magnitude smaller than with $\mu = 0$ and the
$l^2(L^2)$ error of the velocity differs by two orders of magnitude. The errors for the simulations with
grad-div stabilization are clearly robust with respect to the size of the viscosity coefficient and they
are of small magnitude.

\begin{figure}[t!]
  \centering
  \includegraphics[width = 0.45 \textwidth]{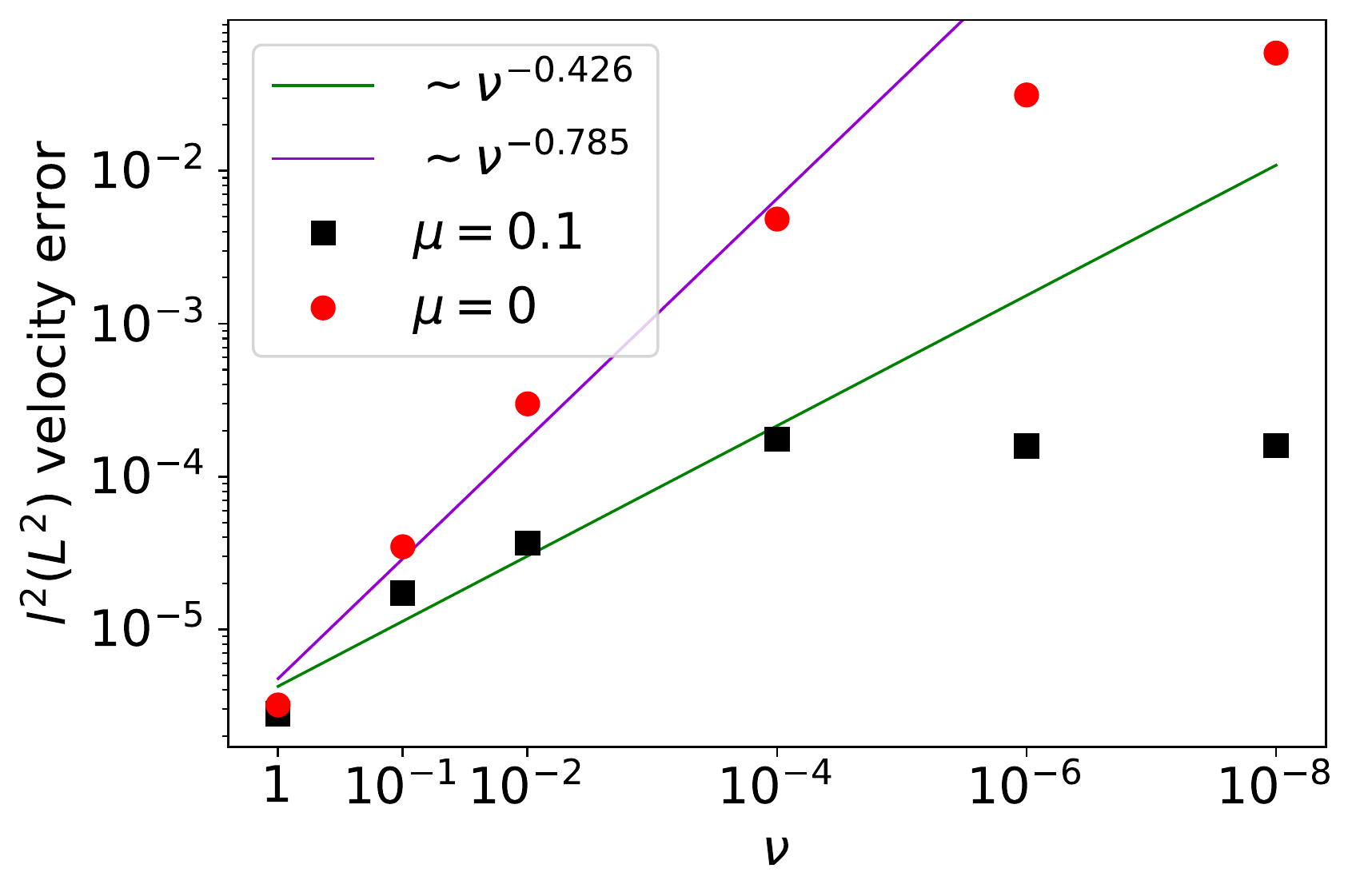}
  \includegraphics[width = 0.45 \textwidth]{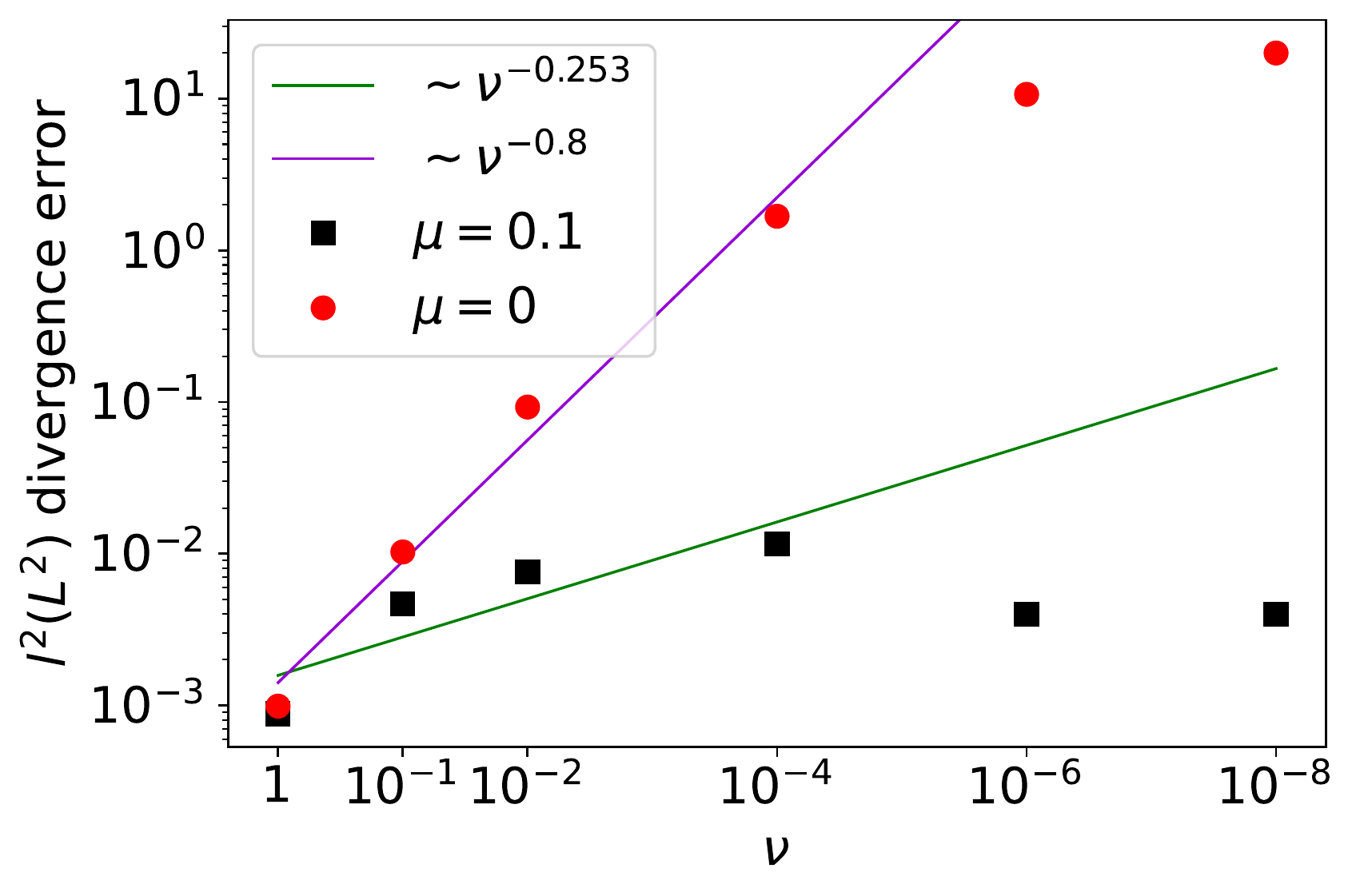}
  \caption{Velocity and divergence errors for the no-flow problem,
    varying viscosity.}
  \label{fig:noflow_visc_u}
\end{figure}

The impact of the value of the viscosity coefficient on the POD-ROM pressure errors is shown in
Figure~\ref{fig:noflow_visc_p}. It can be seen that the errors are notably larger for small viscosity
coefficients if the grad-div stabilization was not applied. For instance, for the case $\nu = 10^{-8}$
we could observe that the FOM simulation without grad-div stabilization became very inaccurate towards
the end of the time interval, leading to pressure snapshots with poor quality. Considering from now on only the simulations
with grad-div stabilization, then the $l^2(L^2)$ errors obtained
with SE-ROM are a little bit smaller than the corresponding errors of the SM-ROM solutions. For $l^2(H_K^1)$,
both methods are of very similar accuracy. Clearly, the pressure errors for SE-ROM and SM-ROM are robust with respect
to small values of the viscosity.

\begin{figure}[t!]
  \centering
  \includegraphics[width = 0.45 \textwidth]{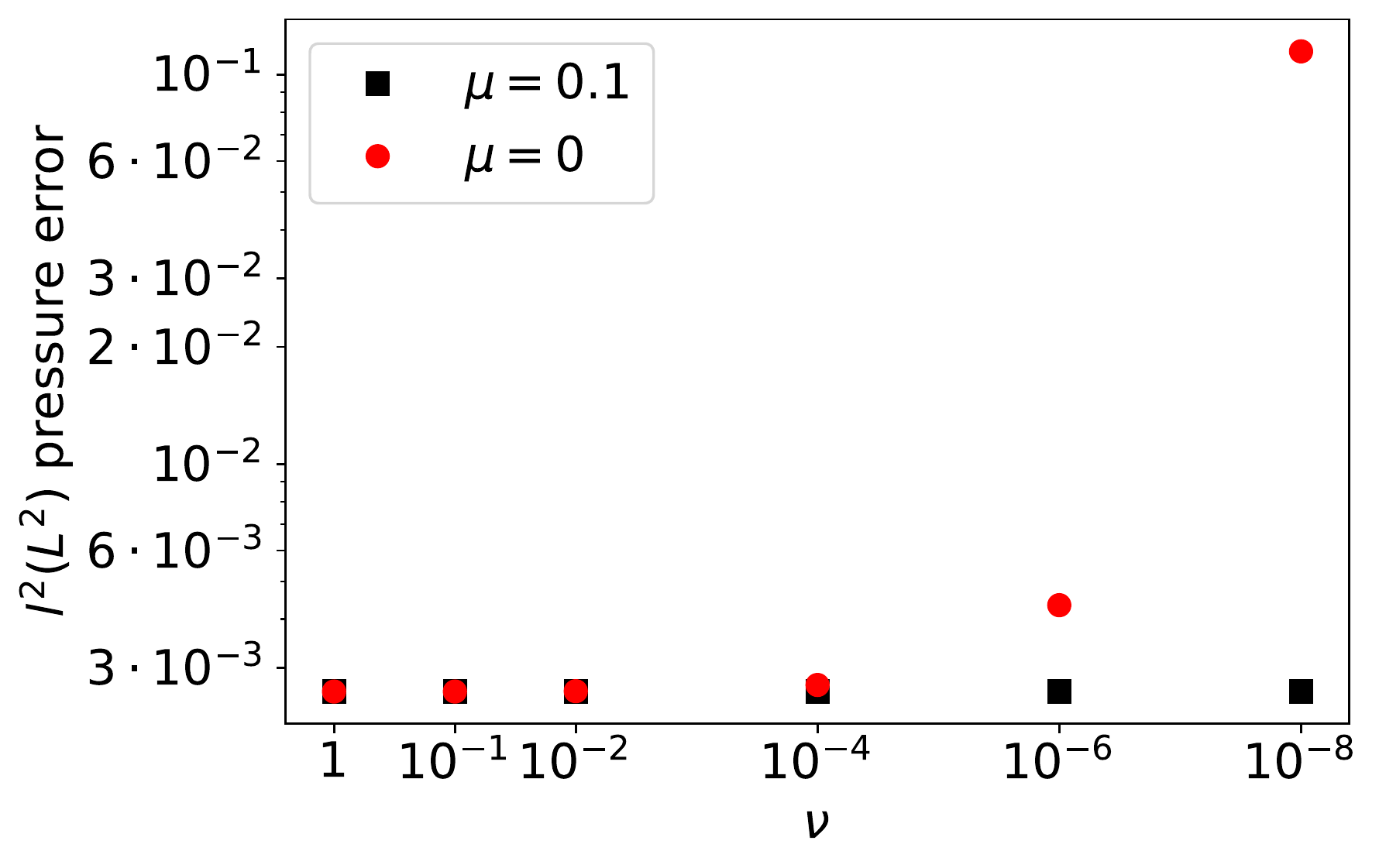}
  \includegraphics[width = 0.45 \textwidth]{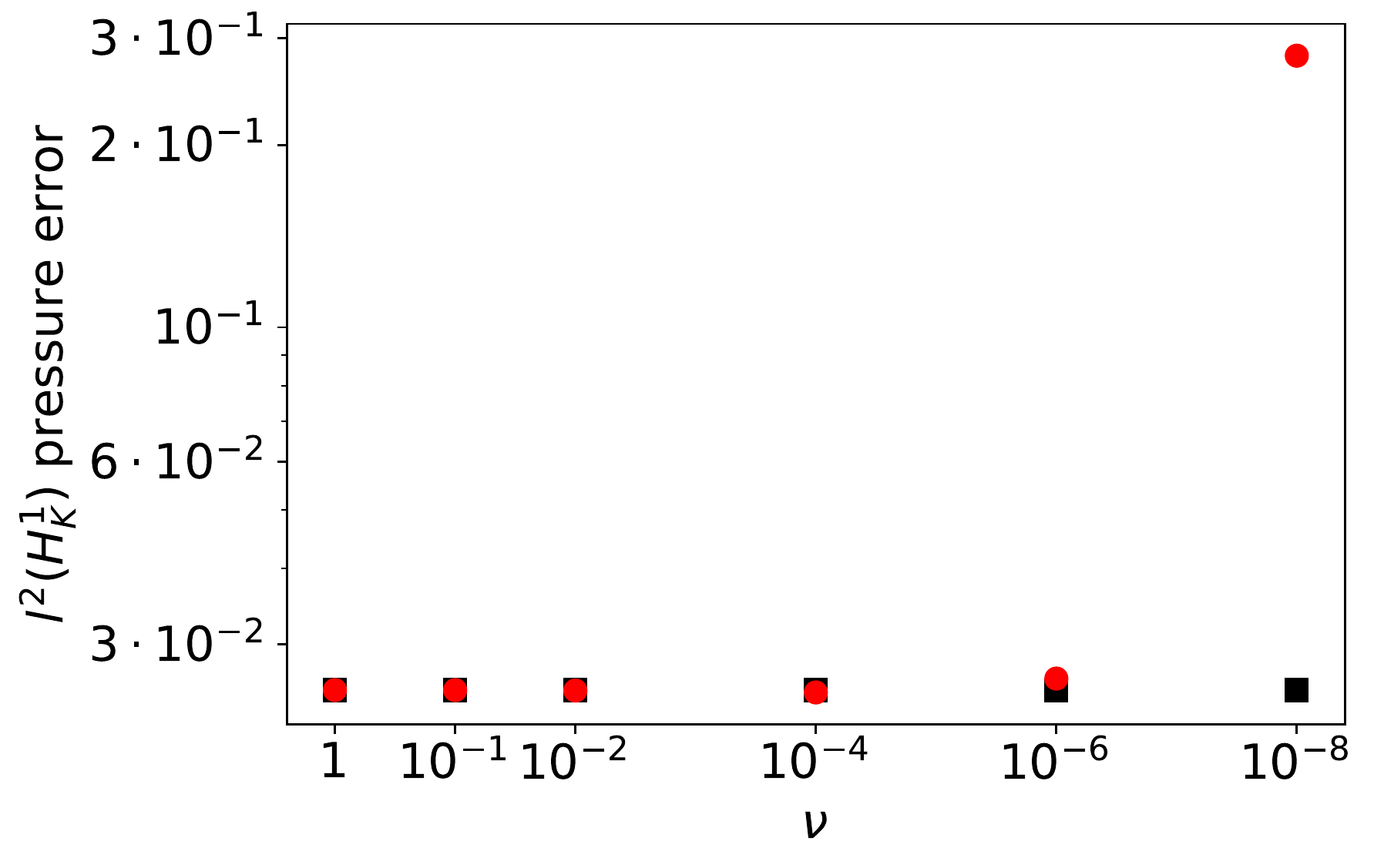}
  \includegraphics[width = 0.45 \textwidth]{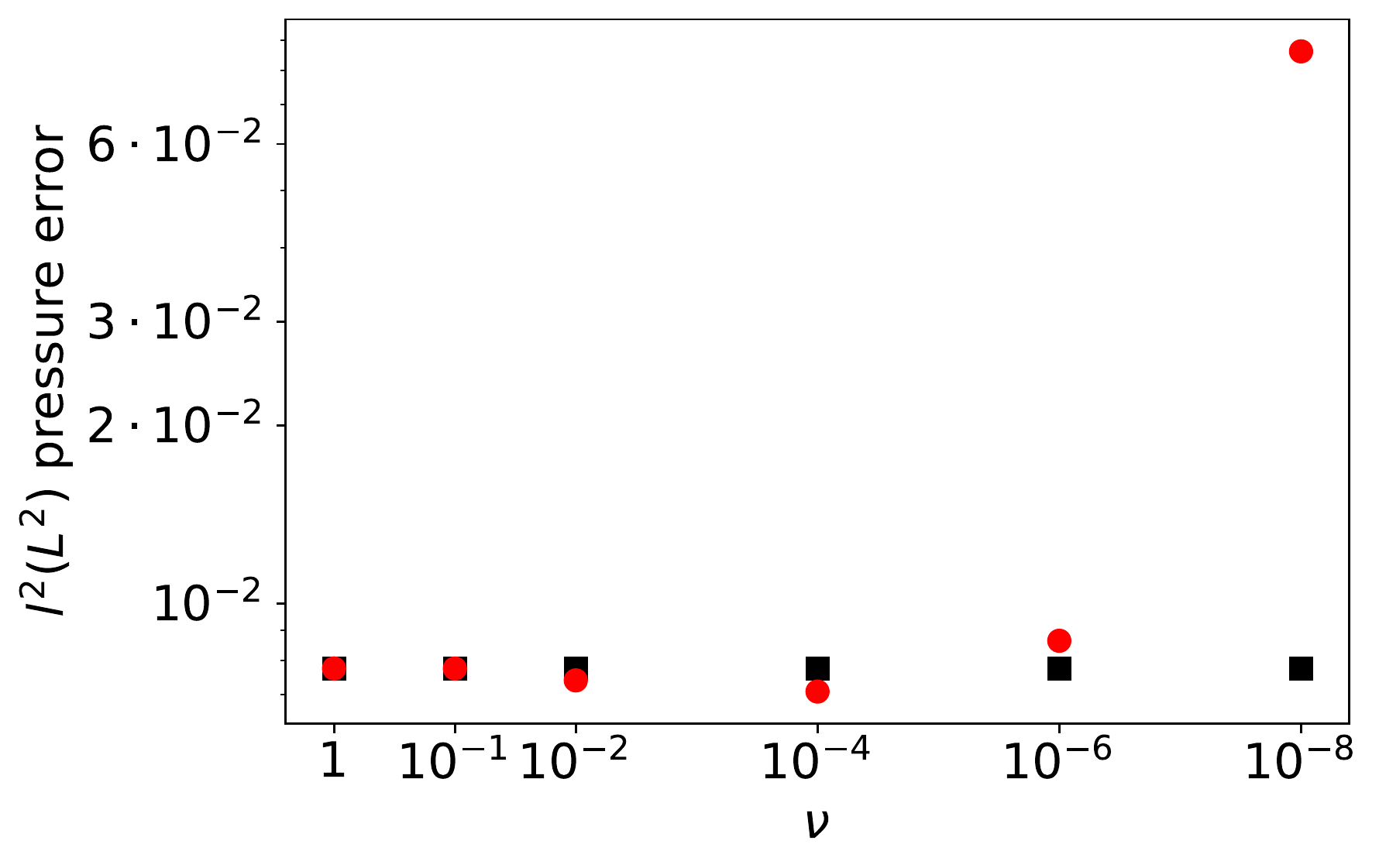}
  \includegraphics[width = 0.45 \textwidth]{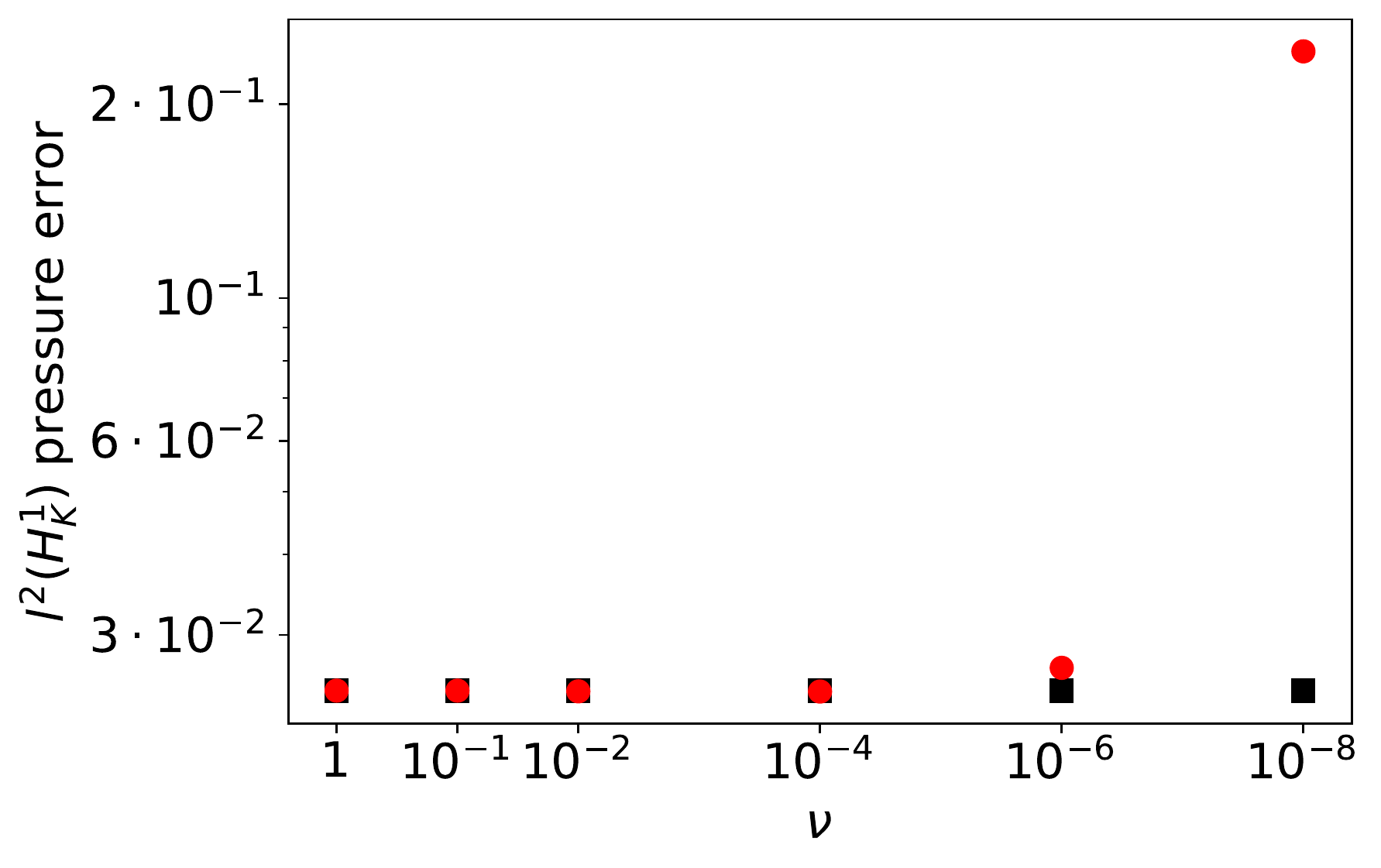}
  \caption{Pressure errors for the no-flow problem,
    varying viscosity.
    Upper row: SE-ROM. Lower row: SM-ROM.}
  \label{fig:noflow_visc_p}
\end{figure}

\subsubsection{Summary}

The well-known benefit of grad-div stabilization on velocity errors could be seen also in this no-flow problem.
It was observed, for small viscosity coefficients, that poor velocity results from FOM simulations without grad-div
stabilization might have a strong impact on POD-ROM pressure errors, because the poor velocity results might induce
inaccurate pressure results and thus lead to pressure snapshots of poor quality. In this respect, using grad-div
stabilization is also beneficial for computing accurate POD-ROM pressure solutions. The numerical studies supported
the analytic results with respect to the robustness of the errors for small viscosity coefficients and with respect
to convergence orders. Concerning the error in $l^2(L^2)$, using SE-ROM led to slightly more accurate results
than using SM-ROM. In all other aspects, both methods behaved very similarly.

\subsection{A Flow around a Cylinder} \label{ssec:cylinder}

A classical example of dynamics that a reduced order model should be able to represent well is the periodic
von K\'arm\'an vortex street that forms on the downstream side of an obstacle placed in the way of a
constant upstream flow of moderate velocity.

We considered the classical benchmark problem defined in \cite{ST96}. Let $\Omega = (0, 2.2) \times (0, 0.41) \setminus \bar B_{0.05}(0.2, 0.2)$
be a rectangle with a slightly off-center disc cut out near the inlet on the left-hand side, representing a cylinder. 
The inlet boundary condition is prescribed by
\[
\bu(t;(x,y)) = \left(\frac 6 {0.41^2} y (1 - y), 0 \right)~\unitfrac{m}s, \quad y\in [0,0.41].
\]
At the outlet, the so-called do-nothing boundary condition is applied and on all other boundaries,
a homogeneous Dirichlet condition is imposed. The kinematic viscosity of the fluid is assumed to be
$\nu = 0.001~\unitfrac{m^2}{s}$. Taking as characteristic velocity scale the mean inflow $U_\mathrm{mean} = 1~\unitfrac{m}s$ and as characteristic length scale the diameter of the cylinder $\mathcal L = 0.1~\unit{m}$,
the Reynolds number of the flow is $\mathrm{Re} = 100$.

Quantities of interest are the maximal drag and the maximal lift coefficients at the cylinder and the
pressure difference between the front and the back of the cylinder at a certain time within the period
\[
    \Delta P \left(t^* + \frac 1 2 T^*\right)
    = p \left(t^* + \frac 1 2 T^*; (0.15, 0.2) \right)
    - p \left(t^* + \frac 1 2 T^*; (0.25, 0.2) \right),
\]
where $t^*$ is an appropriately defined start of the period and $T^*$ is the length of the period. For the 
exact definitions, we refer to \cite{ST96} or \cite[Example~D.8]{John}. In the numerical studies, the coefficients
were computed by evaluating integrals on a neighborhood of the cylinder, compare  \cite[Example~D.8]{John}.
Since this problem leads to a periodic flow field, the last quantity of interest is the Strouhal number
$\mathrm{St} = \frac{\mathcal L}{UT^*} = 0.1/T^*$, where $T^*$ is estimated by locating pairs of roots
of $c_\mathrm{lift}(t)$.

\begin{figure}[t!]
  \centering
  \includegraphics[width = 0.9 \textwidth]{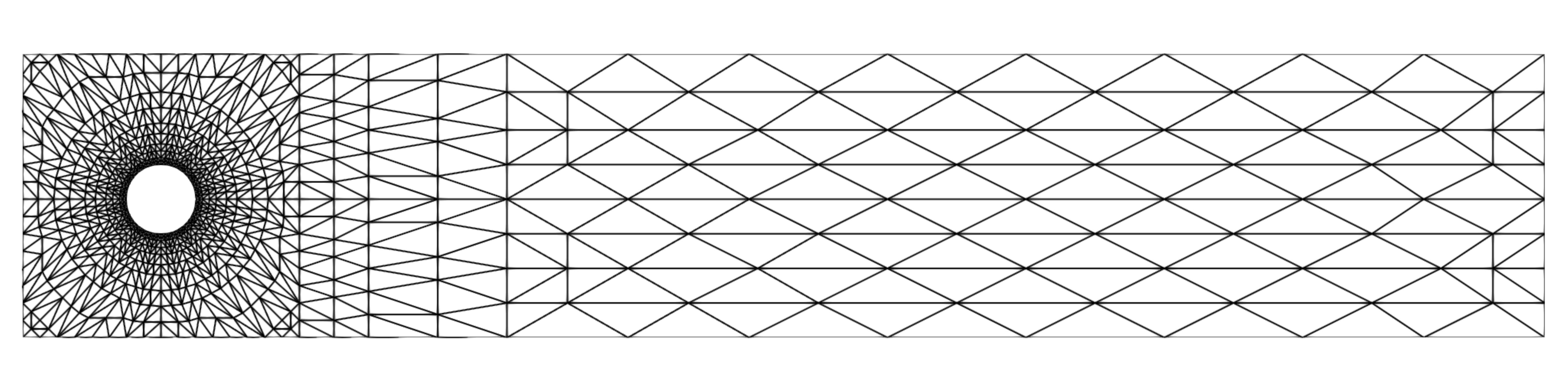}
  \caption{The computational mesh for the flow around a cylinder problem, $L = 1$. Notice that this
    mesh is considerably finer near the cylinder than in the right-hand section of the
    domain.}
  \label{fig:cylinder_mesh}
\end{figure}

We discretized the problem using Taylor--Hood elements with $l=2$ (continuous piecewise quadratic velocities,
continuous piecewise linear pressures) on an irregular triangular mesh. The coarsest version of this mesh is
shown in Figure~\ref{fig:cylinder_mesh}. This mesh was uniformly refined, where in each refinement step 
the vertices on $\Gamma_\mathrm{cyl}$ were 
corrected to lie on the circle. 
Table~\ref{tab:cylinder_mesh} summarizes the mesh statistics. 
As temporal discretization, the BDF2 scheme with a constant time step of $\Delta t = 0.005$ was utilized.

\begin{table}[t!]
  \begin{center}
    \caption{Mesh statistics for the cylinder problem:
      refinement level $L$,
      number of triangular cells $N_K$,
      largest cell diameter $h$,
      velocity and pressure space dimensions.
    }
    \label{tab:cylinder_mesh}
    \begin{tabular}{l|c|c|c|c}
      $L$ & $N_K$ &                $h$ & $\mathrm{dim}(\boldsymbol X^2_h)$ & $\mathrm{dim}(Q^1_h)$ \\ [0.2em]
      \hline
      1 &  1552 & $7.17 \cdot 10^{-3}$ &                              6496 &                   848 \\ [0.2em]
      2 &  6208 & $3.55 \cdot 10^{-3}$ &                             25408 &                  3248 \\ [0.2em]
      3 & 24832 & $1.76 \cdot 10^{-3}$ &                            100480 &                 12704 \\ [0.2em]
      4 & 99328 & $8.79 \cdot 10^{-4}$ &                            399616 &                 50240
    \end{tabular}
  \end{center}
\end{table}

Results were obtained by first developing the flow from homogeneous initial conditions in the time interval
$[0, 8]$, by the end of which the periodic vortex shedding was well established in all simulations. Snapshots
were then generated by a simulation in the interval $[8, 10]$, corresponding to roughly six periods of the
flow's behavior.

As this example includes steady inhomogeneous Dirichlet boundary conditions, we did not include the average of
the snapshots in the computation of the modes. Instead, we applied the POD with respect to the $L^2(\Omega)^2$ inner
product to the FOM snapshots $\tau \partial_t \bu_h^1, \ldots, \tau \partial_t \bu_h^{401}$, where $\tau = 0.3~\unit{s}$
is roughly the length of the period, to construct a linear reduced order velocity space $\bU^r$ and modified
the velocity ROM \eqref{eq:pod_method2} by taking $\bu_r \in \bar \bu_h + \bU^r$, with snapshot average
$\bar \bu_h$, resulting in additional terms involving $\bar \bu_h$ on the right-hand side of the equation in
each time instant.

As before, pressure modes were computed by applying ($L^2(\Omega)$-)POD to the pressure fluctuation snapshots
$p_h^1 - \bar p_h, \ldots, p_h^{401} - \bar p_h$ to give a reduced order pressure space $\mathcal W^r$,
and both SE-ROM and SM-ROM pressures were computed for $p_r \in \bar p_h + \mathcal W^r$.

\begin{table}[t]
  \begin{center}
    \caption{POD statistics for the flow around a cylinder problem:
      refinement level $L$,
      number of velocity modes $d_{v, \mathrm{gd}}$, $d_{v, \mathrm{ngd}}$
      with and without grad-div stabilization,
      number of pressure modes $d_{p, \mathrm{gd}}$, $d_{p, \mathrm{ngd}}$
      with and without grad-div stabilization.
    }
    \label{tab:cylinder_eigenvalues}
    \begin{tabular}{l||c|c||c|c}
      $L$ & $d_{v, \mathrm{gd}}$ & $d_{p, \mathrm{gd}}$ & $d_{v, \mathrm{ngd}}$ & $d_{p, \mathrm{ngd}}$ \\ [0.2em]
      \hline
      1 &                   38 &                   15 &                    52 &                    17 \\ [0.2em]
      2 &                   58 &                   17 &                    72 &                    20 \\ [0.2em]
      3 &                   81 &                   26 &                    82 &                    23
    \end{tabular}
  \end{center}
\end{table}

\begin{figure}[h]
  \centering
  \includegraphics[width = 0.45 \textwidth]{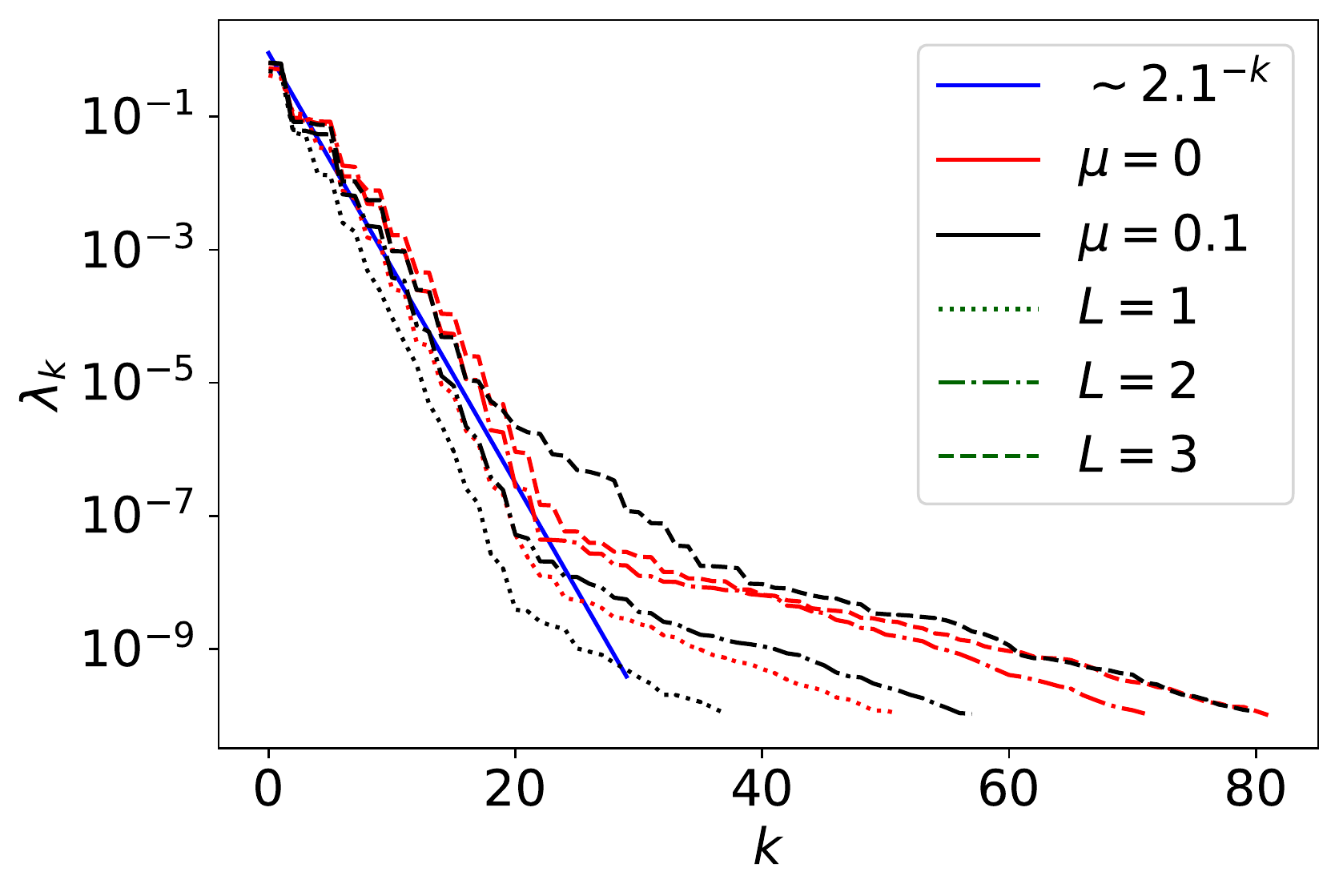}
  \includegraphics[width = 0.45 \textwidth]{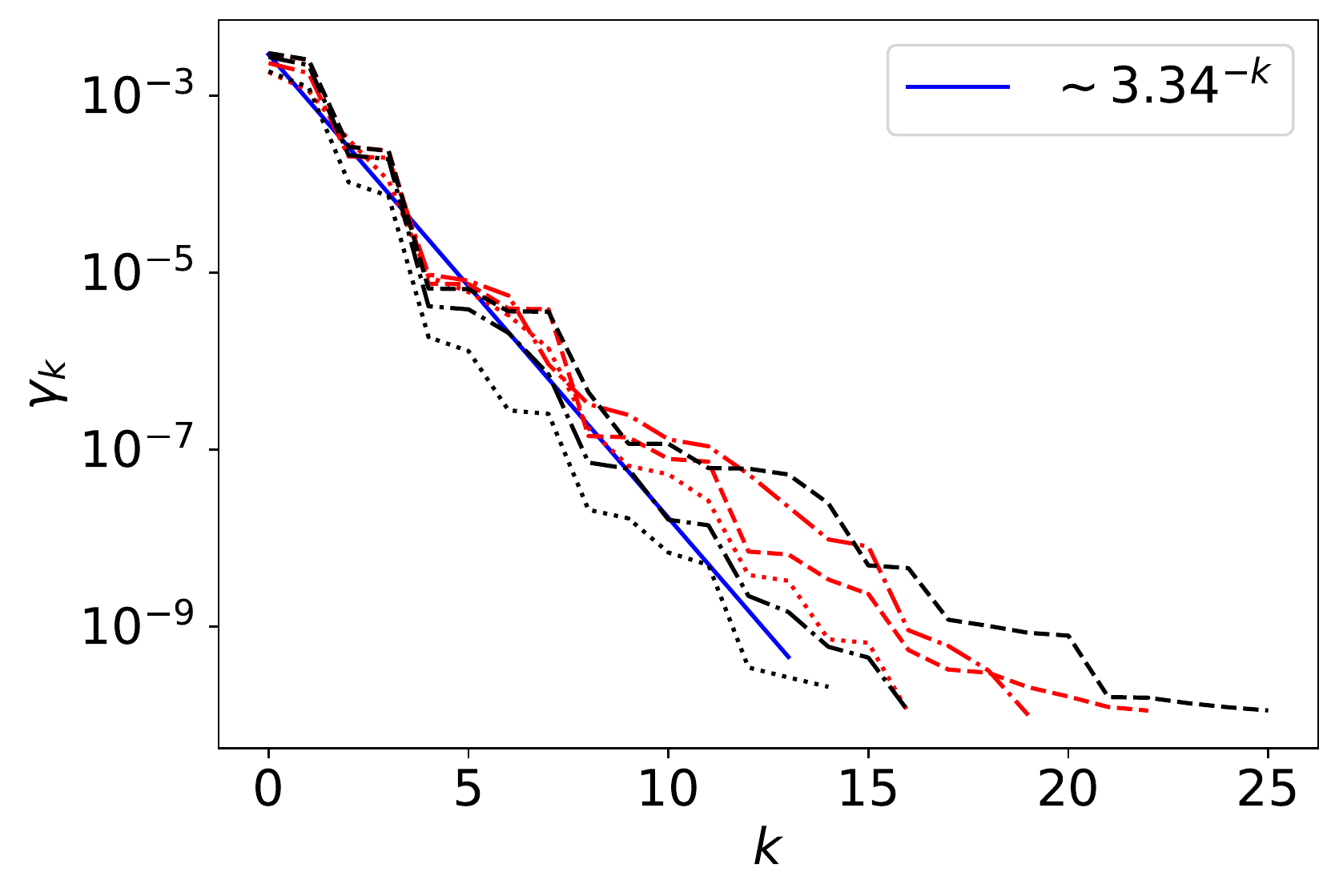}
  \caption{Velocity and pressure eigenvalues for the flow around a cylinder problem, varying grid size.}
  \label{fig:cylinder_eigenvalues}
\end{figure}

Modes corresponding to eigenvalues $\lambda < 10^{-10}$ were again discarded. Table~\ref{tab:cylinder_eigenvalues}
shows the number of eigenvalues depending on the grid size and Figure~\ref{fig:cylinder_eigenvalues} plots their
decay. Note the considerably smaller number of pressure modes.

Reference intervals for the quantities of interest are defined in \cite{ST96}, see Table~\ref{tab:cylinder_results}.
For the sake of brevity, we will present only very few selected results, namely those for $L=3$, 
which are displayed in  Table~\ref{tab:cylinder_results}. As additional reference to compare with, 
we included statistics from a FOM computation at $L = 4$ in the same time interval $[8, 10]$. 

\begin{table}[t]
  \begin{center}
    \caption{Comparison of the flow around a cylinder problem results' to reference intervals. Here $t^*$ is the time in each period
      at which $c_\mathrm{lift}$ reaches its peak, and $T^*$ is the length of the period; this quantity is
      averaged over all available periods.}
    \label{tab:cylinder_results}
    \begin{tabular}{l|c||c|c|c|c}
      Model               & $L$ &    $\mathrm{St}$ & $\max(c_\mathrm{drag})$ & $\max(c_\mathrm{lift})$ & $\Delta P(t^* + \frac 1 2 T^*)$ \\ [0.2em]
      \hline
      Reference interval  &     & $[0.295, 0.305]$ & $[3.22, 3.24]$ & $[0.98, 1.02]$ & $[2.46, 2.50]$ \\ [0.2em]
      FOM, $\mu = 0$      &   4 &           0.3005 &          3.227 &         0.9839 &          2.484 \\ [0.2em]
      \hline
      FOM, $\mu = 0$      &   3 &           0.3004 &          3.227 &         0.9848 &          2.485 \\ [0.2em]
      FOM, $\mu = 0.1$    &   3 &           0.3003 &          3.229 &         0.9914 &          2.487 \\ [0.2em]
      \hline
      SM-ROM, $\mu = 0$   &   3 &           0.3014 &          3.221 &         0.9504 &          2.479 \\ [0.2em]
      SM-ROM, $\mu = 0.1$ &   3 &           0.3029 &          3.239 &         1.0353 &          2.489 \\ [0.2em]
      \hline
      SE-ROM, $\mu = 0$   &   3 &           0.3003 &          3.229 &         0.8925 &          2.473 \\ [0.2em]
      SE-ROM, $\mu = 0.1$ &   3 &           0.3001 &          3.229 &         0.8891 &          2.467 \\ [0.2em]

    \end{tabular}
  \end{center}
\end{table}

The values in Table~\ref{tab:cylinder_results} were obtained with the number of POD modes given in Table~\ref{tab:cylinder_eigenvalues} and they were computed by averaging the values of six subsequent 
periods. For the Strouhal number, the maximal drag coefficient, and the pressure difference, all 
values computed with the POD-ROMs, with and without grad-div stabilization and for both approaches for 
approximating the pressure, are within the respective reference intervals. Results of this type should be expected if all 
important POD modes are used and if the FOM results are already sufficiently accurate. However, 
the situation is different for the maximal lift coefficient, which depends strongly on the pressure 
approximation at the cylinder. In all POD-ROM simulations, the correct order of magnitude was obtained, 
but the results are outside the reference interval, with the results of SM-ROM being closer to it.

\section{Summary and Outlook}\label{sec:summary}

This paper presents the error analysis for two ways of computing a pressure approximation in POD-ROM 
simulations of incompressible flow problems. Based on using a grad-div stabilization for both the 
FOM and the velocity POD-ROM, error bounds were derived whose constants do not blow up as the viscosity
tends to zero. For the supremizer enrichment POD-ROM (SE-ROM), the presented analysis covers a different set of generating
elements of the velocity ROM space than considered so far in the literature. Concerning the 
stabilization-motivated POD-ROM (SM-ROM), the results of the literature are improved considerably with respect 
to several aspects, compare the description in the Introduction. Numerical studies support the analytic 
results and provide an initial comparison of the behavior of both methods for computing a POD-ROM pressure. 

The numerical results comparing SE-ROM and SM-ROM are not yet conclusive. Often, the results are quite 
similar. In the first example, the SE-ROM was more accurate with respect to the error in the $l^2(L^2)$ norm
and in the second example, SM-ROM was more accurate with respect to the maximal lift coefficient. Further 
numerical studies are needed for obtaining a better understanding of the performance of both methods 
in practice. 

\bibliographystyle{abbrv}
\bibliography{references}
\end{document}